\documentclass[]{amsart}

\usepackage{amssymb}
\usepackage{enumerate}
\usepackage{graphicx} 

\usepackage[all]{xy}

\newtheorem{proposition}{Proposition}
\newtheorem{theorem}[proposition]{Theorem}
\newtheorem{lemma}[proposition]{Lemma}
\newtheorem{corollary}[proposition]{Corollary}

\theoremstyle{definition}  
\newtheorem{axiom}[proposition]{Axiom} 
\newtheorem{definition}[proposition]{Definition}
\newtheorem{example}[proposition]{Example}


\newcommand{\thmref}[1]{Theorem~\ref{#1}}
\newcommand{\propref}[1]{Proposition~\ref{#1}}

\newcommand{\lemref}[1]{Lemma~\ref{#1}}
\newcommand{\cororef}[1]{Corollary~\ref{#1}}
\newcommand{\defref}[1]{Definition~\ref{#1}}
\newcommand{\examref}[1]{Example~\ref{#1}}


\newcommand{\cM}{\mathcal{M}}
\newcommand{\Top}{\mathrm{\bf Top^w}}
\newcommand{\SX}{\Sigma X}
\newcommand{\CP}{{\mathbb C}P}
\newcommand{\secat}{{\rm secat}\,}
\newcommand{\cat}{{\rm cat}\,}
\newcommand{\Relcat}{{\rm Relcat}\,}
\newcommand{\relcat}{{\rm relcat}\,}
\newcommand{\Cat}{{\rm Cat}\,}
\newcommand{\Pushcat}{{\rm Pushcat}\,}

\newcommand{\ipu}{{i+1}}
\newcommand{\compl}{{\rm compl}\,}
\newcommand{\relcompl}{{\rm relcompl}\,}
\newcommand{\Pushcompl}{{\rm Pushcompl}\,}
\newcommand{\Compl}{{\rm Compl}\,}
\newcommand{\id}{{\rm id}}
\newcommand{\inc}{{\rm in}}
\newcommand{\pro}{{\rm pr}}
\newcommand{\resp}{respectively: }
\newcommand{\undemi}{\frac{1}{2}}
\renewcommand{\leq}{\leqslant}
\renewcommand{\geq}{\geqslant}
\newcommand{\equi}{\mathrel{\scalebox{1.2}{$\sim$}}}


\title[Approximations of secat and topological complexity]{Up-to-one approximations of sectional category and topological complexity}

\author{Jean-Paul Doeraene and Mohammed El Haouari}

\subjclass[2010]{55M30}  

\keywords{Ganea fibration, sectional category, topological complexity.} 

\begin{document}


\begin{abstract}
James' sectional category and Farber's topological complexity are studied in a general and unified framework.

We introduce `relative' and `strong relative' forms of the category for a map 
and show that both can differ from sectional category by just one. 
A map has sectional or relative category at most $n$ if, and only if, it is `dominated' in a (different) sense by a map with strong relative category at most $n$.
A homotopy pushout can increase sectional category but neither homotopy pushouts, nor homotopy pullbacks, can increase (strong) relative category. This makes (strong) relative category a convenient tool to study sectional category. We completely determine the sectional and relative categories of the fibres of the Ganea fibrations. 

In particular, the `topological complexity' of a space is the sectional category of the diagonal map, and so it can differ from the (strong) relative category of the diagonal by just one. We call the strong relative category of the diagonal `strong complexity'. We show that the strong complexity of a suspension is at most two.
\end{abstract}
 
\maketitle 


Our aim is to build a general and unified framework to study James' \emph{sectional category} of a map \cite{Jam78}, 
and Farber's \emph{topological complexity} of a space, which is the sectional category of the diagonal map \cite{Far03}.

In the first section, we give definitions of secat in the `Ganea-Whitehead style', and introduce variants of secat, called \emph{relative category}
and \emph{strong relative category}. The main result of this section, \thmref{takens}, is that all these variants differ  by just one. 
Also there is a kind of `attachment formula' for relative category, which is \propref{relcatcone}.
These results, and others, come essentially from Lemmas \ref{cylsomme} and \ref{jointmagique} which assert that
homotopy pushouts and homotopy pullbacks do not increase (strong) relative
category. It is fruitful to jointly consider sectional and relative categories, because they
do not share the same properties: see \lemref{cylsomme} or \propref{secatbut} for instance.
At the end of the section, we show in \cororef{secatfibreganea} that the sectional category of the fibre $F_i \to G_i$ of the $i^{\text{\small th}}$ Ganea fibration $G_i \to X$ is $\min\{i,\cat(X)\}$, while its relative category is  $\min\{i+1,\cat(X)\}$. Actually, this result comes as a particular case of the determination of the sectional category and relative category of maps of some `relative' Ganea construction, which is given in \thmref{secatganea}.

In the second section, we apply the results of the first section to complexity. Variants of complexity, corresponding to variants of sectional category, differ by just one.  In particular, the strong relative category of the diagonal $\Delta\colon X \to X\times X$ is called the \emph{strong complexity} of $X$, and in \thmref{complsusp} we show, by an explicit computation, that the strong complexity of any suspension is at most two. 

We work in the category of well-pointed topological spaces $\Top$ ({\em well-pointed} means that the inclusion of the base point is a closed cofibration) \cite{Str72}.
But we don't use any construction particular to topological spaces, we use only homotopy pullbacks and homotopy pushouts;
so our techniques also apply in algebraic categories used to model topological spaces (commutative differential graded algebras, modules over a d.g.a., etc.).
More precisely, our results apply in the general context of a closed model category $\cM$, satisfying the {\em Cube axiom} (see the appendix for details).  
However, in these categories, in order to use the usual property of `homotopy', one needs cofibrant-fibrant objects, but it's possible to circumvent this difficulty with some technical complications; see \cite{Doe93} or \cite{DiaCal12}.

We thank Professor Peter Landweber for his careful reading and useful suggestions. We also thank the referee for his valuable remarks.

\section[Sectional category]{Sectional category}\label{sectionwhiteheadganea}

\subsection{The Ganea construction}

\begin{definition}\label{ganea}
For any map $\iota_X\colon A \to X$ of $\cM$,
the \emph{Ganea construction} of $\iota_X$
is the following sequence of homotopy commutative diagrams ($i \geq 0$):
$$\xymatrix{
&A\ar[dr]_{\alpha_{i+1}}\ar[rrrd]^{\iota_X}\\
F_{i}\ar[rd]_{\beta_i}\ar[ur]^{\eta_{i}}&&G_{i+1}\ar[rr]|-(.35){g_{i+1}}&&X\\
&G_{i}\ar[ru]^{\gamma_{i}}\ar[rrru]_{g_{i}}}$$
where the outside square is a homotopy pullback, the inside
square is a homotopy pushout and the map $g_{i+1} \colon G_{i+1} \to X$
is the whisker map induced by this homotopy pushout.
The induction starts with  $g_0 = \iota_X \colon A \to X$.
\end{definition}

We  denote $G_i$ by $G_i(\iota_X)$, or by $G_i(X,A)$.
If $\cM$ is pointed with $*$ as zero object, we write $G_i(X)=G_i(X,*)$.

The sequence of homotopy commutative diagrams above extends to:
$$\xymatrix{
&&&A\ar[dr]_{\alpha_{i+1}}\ar[rrrd]^{\iota_X}\\
A\ar@{-->}[rr]|-(.7){\theta_i}\ar@{=}[rrru]\ar[rrrd]_{\alpha_{i}}&&F_i\ar[rd]^{\beta_i}\ar[ur]&&G_{i+1}\ar[rr]|-(.35){g_{i+1}}&&X\\
&&&G_i\ar[ur]^{\gamma_i}\ar[ru]\ar[rrru]_{g_{i}}
}$$
where $\alpha_0 = \id_A$ and, since $g_i \circ \alpha_i \simeq \iota_X$, the map $\theta_i\colon A \to F_i$ is the whisker map induced by the homotopy pullback $F_i$. 
Notice also that $\gamma_i \circ \alpha_i \simeq \alpha_{i+1}$.

\begin{definition}\label{LSganea}
Let $\iota_X\colon A \to X$ be a map of $\cM$. 

1) The \emph{sectional category} of $\iota_X$ is the least integer $n$ such that the map $g_n\colon G_n(\iota_X)\to X$ has a homotopy section, i.e. there exists a map $\sigma\colon X \to G_n(\iota_X)$ such that $g_n \circ \sigma \simeq \id_X$.

2) The \emph{relative category} of $\iota_X$ is the least integer $n$ such that the map $g_n\colon G_n(\iota_X)\to X$ has a homotopy section $\sigma$ and $\sigma \circ \iota_X \simeq \alpha_n$.
\end{definition}

We denote the sectional category  by $\secat(\iota_X)$ or $\secat(X,A)$, and the relative category by $\relcat(\iota_X)$ or $\relcat(X,A)$.
If $\cM$ is pointed with $\ast$ as zero object, we write $\cat(X)=\secat(X,\ast) = \relcat(X,\ast)$.

\smallskip
In $\Top$, $\cat(X)$ is T.~Ganea's version of the {\em category} of L.~Lusternik and L.~Schnirelmann. 
For a normal path-connected space $X$ with non-degenerate basepoint,
the definition here is equivalent to the original one with open covers of $X$ (up to a shift by 1): 
${\rm LS-}\cat(X) = \cat(X) + 1$.

For a comprehensive review on category and
sectional category of topological spaces and maps, see \cite{Jam78};
these notions are also deeply analysed in \cite{CLOT03}.
Warning: Our relative category is \emph{not} that of E.~Fadell \cite{Fad96}.


\subsection{Strong pushout category}

Here we define the `strong pushout category' of a map and establish its basic properties.
The principal result here is \propref{catretractstrong}, which characterizes sectional and relative category in terms of this invariant.

\begin{definition}\label{strongsecat}
The \emph{(strong) pushout category of a  map $\iota_X\colon A\to X$ of $\cM$} is the least integer $n$ such that:
\begin{itemize}
\item There are maps $\iota_0\colon A \to X_0$ and a homotopy inverse $\lambda\colon X_0\to A$, i.e.  $\iota_0
  \circ \lambda \simeq \id_{X_0}$ and $\lambda \circ \iota_0 \simeq \id_A$;

\item for each $i$, $0\leq i < n$, there exists a homotopy pushout
$$\xymatrix@C=3pc{
Z_i\ar[r]\ar[d]&A\ar[d]^{\iota_{i+1}}\\
X_i\ar[r]_{\chi_i}&X_{i+1}
}$$
such that $\chi_i\circ \iota_i \simeq \iota_{i+1}$;

\item $X_n=X$ and   $\iota_n \simeq \iota_X$.
\end{itemize}
\end{definition}

We denote the strong pushout category by $\Pushcat(\iota_X)$, or $\Pushcat(X,A)$.

\smallskip
In particular, $\Pushcat(\iota_X) = 0$ iff $\iota_X$ is a homotopy equivalence.
When this is not true, then $\Pushcat(\iota_X) = 1$ if there is a homotopy pushout:
$$\xymatrix{
Z\ar[r]^{\rho}\ar[d]_{\rho'}&A\ar[d]^{\iota_X}\\
A\ar[r]_{\iota_X}&X
}$$
(The maps $\rho$ and $\rho'$ are {\em not} necessarily homotopic, unless $\iota_X$ is a homotopy monomorphism.)


\smallskip
For $0 \leq i < n$, define the sequence of maps $\xi_{i} = \xi_{i+1} \circ
\chi_{i}$  beginning with $\xi_n = \id_X$. 
We have $\xi_n \circ \iota_n \simeq \iota_X$ and $\xi_{i}\circ \iota_{i} = \xi_{i+1} \circ \chi_{i} \circ \iota_{i} \simeq \xi_{i+1} \circ
\iota_{i+1} \simeq \iota_X$ by decreasing induction. So we have a sequence of homotopy commutative diagrams:
$$\xymatrix{
&A\ar[drrr]^{\iota_X}\ar[dr]_{\iota_{i+1}}\\
Z_{i}\ar[ru]\ar[rd]&&X_{i+1}\ar[rr]|-(.35){\xi_{i+1}}&&X\\
&X_{i}\ar[ru]^{\chi_{i}}\ar[rrru]_{\xi_{i}}&&&}$$
where the inside square is a homotopy pushout and the map $\xi_{i+1}$ is the
whisker map induced by this homotopy pushout. Notice that $\xi_0 \simeq \xi_0 \circ \iota_0
\circ \lambda \simeq \iota_X \circ \lambda$.

\smallskip
Observe that, clearly, $\Pushcat(\iota_i) \leq i$. Also we have:

\begin{lemma}\label{pushcone}
Let $\iota_X\colon A\to X$ be a map of $\cM$. Suppose we have a homotopy pushout:
$$\xymatrix{
Z\ar[r]^{\rho}\ar[d]&A\ar[d]^{\iota_C}\\
X\ar[r]_\chi&C
}$$
where $\chi\circ\iota_X\simeq\iota_C$.
Then, we have 
$\Pushcat(\iota_C)\leq \Pushcat(\iota_X)+1$.
\end{lemma}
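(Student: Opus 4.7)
The plan is simply to concatenate the given pushout square onto any sequence that realizes $\Pushcat(\iota_X)$. Set $n = \Pushcat(\iota_X)$ and invoke \defref{strongsecat} to fix a witnessing diagram: maps $\iota_0\colon A\to X_0$ and $\lambda\colon X_0\to A$ that are homotopy inverse, together with homotopy pushouts
$$\xymatrix@C=3pc{
Z_i\ar[r]\ar[d]&A\ar[d]^{\iota_{i+1}}\\
X_i\ar[r]_{\chi_i}&X_{\ipu}
}$$
for $0\leq i<n$, satisfying $\chi_i\circ\iota_i\simeq\iota_{\ipu}$, and with $X_n=X$ and $\iota_n\simeq\iota_X$.

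Now define one more stage by $X_{\npu}=C$, $\iota_{\npu}=\iota_C$, $\chi_n=\chi$, and $Z_n=Z$. The given hypothesis provides exactly the required homotopy pushout square for this step. The final thing to check is the compatibility $\chi_n\circ\iota_n\simeq\iota_{\npu}$: since $\iota_n\simeq\iota_X$, postcomposing with $\chi$ gives $\chi\circ\iota_n\simeq\chi\circ\iota_X$, and combining with the hypothesis $\chi\circ\iota_X\simeq\iota_C$ yields $\chi_n\circ\iota_n\simeq\iota_C=\iota_{\npu}$. The extended data therefore satisfy every clause of \defref{strongsecat} for the map $\iota_C$, whence $\Pushcat(\iota_C)\leq n+1=\Pushcat(\iota_X)+1$.

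There is no substantive obstacle here; the argument is bookkeeping. The only point that requires any attention is that the equality $X_n=X$ in a $\Pushcat$-witness comes paired with a mere homotopy $\iota_n\simeq\iota_X$ rather than strict equality, so one has to verify the final compatibility by concatenating this homotopy with the given $\chi\circ\iota_X\simeq\iota_C$ — which is immediate.
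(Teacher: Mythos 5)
Your proof is correct and is precisely the argument the paper has in mind: the lemma is stated without proof there, being regarded as immediate from \defref{strongsecat}, and your concatenation-plus-compatibility-check is exactly that immediate argument made explicit.
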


\begin{definition}\label{dominated}
Consider the following diagram
$$\xymatrix@C=1.5pc@R=1.5pc{
&&X\ar@/^1pc/[dd]^{\varphi}\\
A\ar[rru]^{\iota_X}\ar[drr]_{\iota_Y}&&\\
&&Y\ar@{-->}@/^1pc/[uu]^{\sigma}
}$$
such that 
$\varphi\circ \iota_X\simeq \iota_Y$.

1) If $\sigma$ is a homotopy section of $\varphi$, i.e. $\varphi\circ \sigma\simeq \id_{Y}$, we say that $\iota_Y$ is \emph{(simply) dominated by $\iota_X$.}

2)  If $\sigma$ is a homotopy section of $\varphi$ and $\sigma \circ \iota_Y \simeq \iota_X$,  we say that $\iota_Y$ is \emph{relatively dominated by $\iota_X$.}

3) If  $\sigma$ is a homotopy inverse of $\varphi$, i.e. $\varphi\circ \sigma\simeq \id_{Y}$ and $\sigma\circ \varphi\simeq \id_X$, 
we say that $\iota_Y$ and $\iota_X$ are of \emph{the same homotopy type}.
Notice that in this case, we have also $\sigma \circ \iota_Y \simeq \iota_X$.
\end{definition}

Our definitions of $\secat(\iota_X)$, $\relcat(\iota_X)$, $\Pushcat(\iota_X)$, and $\Relcat(\iota_X)$ below, are designed in such a way that they become \emph{de facto} invariants of  homotopy type.

\begin{proposition}\label{catretractstrong}
Let $\iota_Y\colon A\to Y$ be a map of $\cM$. The following conditions are equivalent:
\begin{enumerate}
\item $\secat(\iota_Y)\leq n$ (\resp $\relcat(\iota_Y)\leq n$);
\item the map $\iota_Y$ is simply (\resp relatively) dominated by a map $\iota_X\colon A\to X$ (see \defref{dominated} above) such that $\Pushcat(\iota_X)\leq n$.
\end{enumerate}
\end{proposition}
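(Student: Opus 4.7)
The plan is to establish both directions using the Ganea construction as the bridge, splitting the nontrivial direction into two auxiliary inequalities.

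For $(1)\Rightarrow(2)$, I take $X:=G_n(\iota_Y)$, $\iota_X:=\alpha_n$ and $\varphi:=g_n$. The Ganea diagram gives $\varphi\circ\iota_X\simeq\iota_Y$, while $\secat(\iota_Y)\leq n$ (resp.\ $\relcat(\iota_Y)\leq n$) provides exactly the (resp.\ relative) section of $\varphi$ required by \defref{dominated}. The inequality $\Pushcat(\alpha_n)\leq n$ is built into the Ganea construction itself: taking $X_i:=G_i(\iota_Y)$, $\iota_i:=\alpha_i$, $Z_i:=F_i$ and $\chi_i:=\gamma_i$ fits \defref{strongsecat}, since $\alpha_0=\id_A$ is a homotopy equivalence, each square is a homotopy pushout, and $\gamma_i\circ\alpha_i\simeq\alpha_{i+1}$ as noted just before the proposition.

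For $(2)\Rightarrow(1)$, I reduce to: (a) $\secat(\iota_X)\leq\Pushcat(\iota_X)$ and $\relcat(\iota_X)\leq\Pushcat(\iota_X)$; and (b) sectional (resp.\ relative) category does not increase under (resp.\ relative) domination. For (b), an induction on $i$ produces a functorial Ganea map $\varphi_i\colon G_i(\iota_X)\to G_i(\iota_Y)$ satisfying $g_i(\iota_Y)\circ\varphi_i\simeq\varphi\circ g_i(\iota_X)$ and $\varphi_i\circ\alpha_i(\iota_X)\simeq\alpha_i(\iota_Y)$, built at each step from the induced map between the defining homotopy pullbacks followed by the pushout universal property; then, for $\tau$ a (relative) section of $g_n(\iota_X)$ and $\sigma$ the section coming from the domination, $\varphi_n\circ\tau\circ\sigma$ is a (relative) section of $g_n(\iota_Y)$, the relative case using $\sigma\circ\iota_Y\simeq\iota_X$ and $\tau\circ\iota_X\simeq\alpha_n(\iota_X)$.

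For (a), I induct on $n=\Pushcat(\iota_X)$. The base $n=0$ is trivial: the homotopy inverse of $\iota_X$ is a relative section of $g_0=\iota_X$. For the inductive step, applying the hypothesis to the truncated sequence gives $\relcat(\iota_{n-1})\leq n-1$, hence a relative section $\tau'$ of $g_{n-1}(\iota_{n-1})$; applying the Ganea-functorial map from (b) along $\xi_{n-1}\colon X_{n-1}\to X$ yields $\tau'':=(\xi_{n-1})_*\circ\tau'\colon X_{n-1}\to G_{n-1}(\iota_X)$ lifting $\xi_{n-1}$. Post-composing with $\gamma_{n-1}$ gives $X_{n-1}\to G_n(\iota_X)$, and combined with $\alpha_n\colon A\to G_n(\iota_X)$, the universal property of the terminal pushout $X=X_n$ will produce the required section $\tau\colon X\to G_n(\iota_X)$, once the two resulting maps $Z_{n-1}\rightrightarrows G_n(\iota_X)$ are shown to agree up to homotopy.

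This last compatibility is the principal obstacle. Both candidates lift the common map $Z_{n-1}\to X$ coming from the given pushout square---one through $\iota_X\colon A\to X$, the other through $g_{n-1}(\iota_X)$---and so factor jointly through the homotopy pullback $F_{n-1}=G_{n-1}(\iota_X)\times_X A$. The Ganea pushout relation $\alpha_n\circ\eta_{n-1}\simeq\gamma_{n-1}\circ\beta_{n-1}$ then equates the two candidates, completing the induction and delivering the chain $\secat(\iota_Y)\leq\secat(\iota_X)\leq\Pushcat(\iota_X)\leq n$ (and likewise for $\relcat$).
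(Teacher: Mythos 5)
Your proof is correct in substance and takes a genuinely different route from the paper for the nontrivial direction $(2)\Rightarrow(1)$. The paper runs a single induction \emph{through} the domination: it builds lifts $\lambda_i\colon X_i\to G_i(\iota_Y)$ of $\varphi\circ\xi_i$ along $g_i(\iota_Y)$, step by step, invoking \lemref{whiskercube} at the inductive step to guarantee that the whisker map $\lambda_{i+1}$ satisfies $g_{i+1}\circ\lambda_{i+1}\simeq\varphi\circ\xi_{i+1}$. You instead factor the argument into two independent facts: (a) $\relcat(\iota_X)\leq\Pushcat(\iota_X)$, by its own induction; and (b) monotonicity of (relative) sectional category under (relative) domination, proved directly from Ganea functoriality (this (b) is essentially what the paper later records as \lemref{changebase} plus \cororef{catdominated}, the latter being derived there as a consequence of this very proposition — your derivation is self-contained, so there is no circularity). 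The modular split is attractive: it isolates the domination step as a purely functorial statement. The $(1)\Rightarrow(2)$ direction is the same as the paper's, with the nice added observation that the Ganea tower $(G_i,\,\alpha_i,\,\gamma_i)$ itself witnesses $\Pushcat(\alpha_n)\leq n$.

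One caution about the final step of your inductive proof of (a). Showing that the two composites $Z_{n-1}\rightrightarrows G_n(\iota_X)$ ``agree up to homotopy'' (via factoring through the homotopy pullback $F_{n-1}$ and using the Ganea square $\alpha_n\circ\eta_{n-1}\simeq\gamma_{n-1}\circ\beta_{n-1}$) gives you a whisker map $\tau\colon X_n\to G_n(\iota_X)$, but it does not by itself establish that $g_n\circ\tau\simeq\id_X$. Both $g_n\circ\tau$ and $\id_X$ are whisker maps out of the pushout $X_n$ that restrict to the same pair of legs, and to identify them one needs the coherence of the attached homotopies, i.e.\ the uniqueness clause for whisker maps and a cube-compatibility argument of the kind packaged in \lemref{whiskercube}. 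The paper's proof is careful to cite exactly this lemma at the analogous point; your argument would need the same citation (or an explicit check that the homotopy $g_n\circ H$ on $Z_{n-1}$ is compatible with the pushout's own homotopy, which it is since $H$ was built by factoring through $F_{n-1}$ and the pullback/pushout squares of the Ganea construction commute over $X$). With that caveat addressed, the proof is complete.
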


\begin{proof}
Consider the map $\alpha_n\colon A \to G_n(\iota_Y)$ as in \defref{ganea} and notice that $\Pushcat(\alpha_n) \leq n$.
If  $\secat(\iota_Y)\leq n$, then $\iota_Y$ is simply dominated by $\alpha_n$.
If  $\relcat(\iota_Y)\leq n$, then $\iota_Y$ is relatively dominated by $\alpha_n$.

\smallskip
For the reverse direction, we suppose the existence of maps  as in \defref{dominated} such that
$\varphi\circ \iota_X\simeq \iota_Y$ and $\varphi\circ \sigma\simeq {\rm id}_Y$.
 From $\Pushcat(\iota_X)\leq n$, we get a sequence of homotopy commutative  diagrams, for $0\leq i < n$, as in \defref{strongsecat},
which gives the top part of the following diagram.
 
We show by induction that the map
$\varphi\circ \xi_i\colon X_i\to Y$ factors through $g_i\colon G_i(\iota_Y)\to Y$ up to homotopy.
This is true for $i=0$ since we have $\xi_0 \simeq \iota_X \circ \lambda$,
so $\varphi\circ \xi_0 \simeq \varphi\circ\iota_X \circ \lambda \simeq \iota_Y
\circ \lambda = g_0 \circ \lambda$.
Suppose now that we have a map $\lambda_{i}\colon X_{i}\to G_{i}(\iota_Y)$
such that $g_{i}\circ \lambda_{i}\simeq \varphi\circ \xi_{i}$. Then we construct a homotopy commutative diagram
$$\xymatrix@C=1.5pc@R=1.5pc{
&Z_{i}\ar[ld]\ar[rr]\ar@{-->}[dd]|!{[ld];[rd]}\hole
&&A\ar[ld]_{\iota_{i+1}}\ar[rd]\ar@{=}[dd]|\hole&\\
X_{i}\ar[rr]\ar[dd]_{\lambda_{i}}&&
X_{i+1}\ar@{..>}[dd]_(.3){\lambda_{i+1}}\ar[rr]_(.34){\xi_{i+1}}&&X\ar[dd]^{\varphi}\\
&F_{i}\ar[rr]|(.5)\hole
\ar[ld]&&A\ar[ld]^{\alpha_{i+1}}\ar[rd]&\\
G_{i}(\iota_Y)\ar[rr]&&G_{i+1}(\iota_Y)\ar[rr]_{g_{i+1}}&&Y
}$$
where $\xymatrix@1{Z_{i}\ar@{-->}[r]& F_{i}}$ is  the whisker map induced by the bottom homotopy pullback and 
$\lambda_{i+1}\colon \xymatrix@1{X_{i+1}\ar@{.>}[r]&  G_{i+1}(\iota_Y)}$ is the whisker map induced by the top homotopy pushout.
The composite $g_{i+1}\circ \lambda_{i+1}$ is homotopic to $\varphi\circ \xi_{i+1}$ by \lemref{whiskercube}.
Hence the inductive step is proven.

At the end of the induction, we have $g_n\circ \lambda_n\simeq \varphi\circ \xi_n = \varphi \circ \id_X = \varphi$. As we have a homotopy section $\sigma\colon Y\to X_n=X$ of $\varphi$, we get a homotopy section $\lambda_n\circ \sigma$ of $g_n$.
If, in addition, $\sigma \circ \iota_Y \simeq \iota_X$, then $\lambda_n \circ \sigma \circ \iota_Y \simeq \lambda_n \circ \iota_X \simeq \lambda_n \circ \iota_n \simeq \alpha_n$.
\end{proof}

\begin{corollary}\label{catdominated}
Let $\iota_Y\colon A\to Y$ be simply (\resp relatively) dominated by $\iota_X\colon A\to X$.
Then we have $\secat(\iota_Y)\leq \secat(\iota_X)$ (\resp $\relcat(\iota_Y)\leq \relcat(\iota_X)$).
\end{corollary}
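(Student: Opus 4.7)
The plan is to derive this corollary directly from \propref{catretractstrong} by composing dominations. The key observation is that ``being dominated'' is a transitive relation: if $\iota_Y$ is dominated by $\iota_X$ and $\iota_X$ is in turn dominated by some $\iota_{X'}$ with small $\Pushcat$, then by composing the domination data we get $\iota_Y$ dominated by $\iota_{X'}$, so \propref{catretractstrong} bounds $\secat(\iota_Y)$.

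More concretely, suppose $\secat(\iota_X) \leq n$. By \propref{catretractstrong}, there is a map $\iota_{X'}\colon A \to X'$ with $\Pushcat(\iota_{X'}) \leq n$ together with $\varphi'\colon X' \to X$ and a homotopy section $\sigma'\colon X \to X'$ such that $\varphi' \circ \iota_{X'} \simeq \iota_X$ and $\varphi' \circ \sigma' \simeq \id_X$. On the other hand, the hypothesis provides $\varphi\colon X \to Y$ and a homotopy section $\sigma\colon Y \to X$ with $\varphi \circ \iota_X \simeq \iota_Y$ and $\varphi \circ \sigma \simeq \id_Y$. Then setting $\Phi := \varphi \circ \varphi'\colon X' \to Y$ and $\Sigma := \sigma' \circ \sigma\colon Y \to X'$ we have $\Phi \circ \iota_{X'} \simeq \varphi \circ \iota_X \simeq \iota_Y$ and $\Phi \circ \Sigma \simeq \varphi \circ (\varphi' \circ \sigma') \circ \sigma \simeq \varphi \circ \sigma \simeq \id_Y$. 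Hence $\iota_Y$ is simply dominated by $\iota_{X'}$, and \propref{catretractstrong} applied in the reverse direction gives $\secat(\iota_Y) \leq n$.

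For the relative version, we just need to track the extra homotopies $\sigma' \circ \iota_X \simeq \iota_{X'}$ (from the relative form of \propref{catretractstrong}) and $\sigma \circ \iota_Y \simeq \iota_X$ (from the hypothesis of relative domination), which yield $\Sigma \circ \iota_Y = \sigma' \circ \sigma \circ \iota_Y \simeq \sigma' \circ \iota_X \simeq \iota_{X'}$, so that $\iota_Y$ is relatively dominated by $\iota_{X'}$ and the same argument gives $\relcat(\iota_Y) \leq n$.

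There is no real obstacle here; the content of the corollary is entirely packed into \propref{catretractstrong}, and once one realises that domination composes in the obvious way, the proof is a line or two of diagram chasing. The only thing to be slightly careful about is that both forms of ``homotopy section'' in the two applications of \propref{catretractstrome} — simple vs.\ relative — must match, which is why the two statements are proved in parallel by the same composition.
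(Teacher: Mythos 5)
Your argument is correct and is exactly the intended derivation: the paper states the result as an immediate corollary of \propref{catretractstrong} without an explicit proof, and the only content needed is precisely the transitivity of (simple and relative) domination, which you verify by composing $\varphi\circ\varphi'$ and $\sigma'\circ\sigma$ and checking the required homotopies. Nothing is missing; the only blemish is a harmless typo near the end where you refer to ``catretractstrome'' instead of \propref{catretractstrong}.
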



\subsection{Strong relative category}

Here we define the `strong relative category' of a map and establish its basic properties.
The principal results here are Lemmas \ref{cylsomme} and \ref{jointmagique} which assert that
homotopy pushouts and homotopy pullbacks do not increase (strong) relative category.

\begin{definition}\label{strongrelcat}
The \emph{strong relative category of a  map $\iota_X\colon A\to X$ of $\cM$} is the least integer $n$ such that:
\begin{itemize}
\item There are maps $\iota_0\colon A \to X_0$ and a homotopy inverse $\lambda\colon X_0\to A$, i.e.  $\iota_0
  \circ \lambda \simeq \id_{X_0}$ and $\lambda \circ \iota_0 \simeq \id_A$;
\item for each $i$, $0\leq i < n$, there exists a homotopy commutative diagram
$$\xymatrix{
&&&A\ar[dr]^{\iota_{i+1}}\\
A\ar@{-->}[rr]_(.7){\sigma_i}\ar@{=}[rrru]\ar[rrrd]_{\iota_i}&&Z_i\ar[rd]\ar[ur]_{\rho_i}&&X_{i+1}\\
&&&X_i\ar[ur]_{\chi_i}
}$$
where the inside square is a homotopy pushout;
\item $X_n=X$ and   $\iota_n \simeq \iota_X$.
\end{itemize}

\end{definition}

We denote the strong relative category by $\Relcat(\iota_X)$, or $\Relcat(X,A)$.

If $\cM$ is pointed with $*$ as zero object, we write $\Cat(X)=\Relcat(X,\ast) = \Pushcat(X,\ast)$.  

\smallskip
In $\Top$, $\Cat(X)$ is the homotopy invariant version of the R.~Fox's {\em strong category}, see \cite{Gan67}. 

\smallskip
In particular, $\Relcat(\iota_X) = 0$ iff $\iota_X$ is a homotopy equivalence.
When this is not true, then $\Relcat(\iota_X) = 1$ if there is a homotopy pushout:
$$\xymatrix{
Z\ar[r]^{\rho}\ar[d]_{\rho'}&A\ar[d]^{\iota_X}\\
A\ar[r]_{\iota_X}&X
}$$
such that $\rho$ and $\rho'$ have a common homotopy section $\sigma$.
The following proposition shows that this situation occurs for any homotopy cofibre:

\begin{proposition}\label{cylcofibre}
Assume $\cM$ is pointed. Given a homotopy cofibration sequence $\xymatrix{Y\ar[r]|f&A\ar[r]|{\iota_X}&X}$, we have $\Relcat(\iota_X) \leq 1$.
\end{proposition}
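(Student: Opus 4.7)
The plan is to verify \defref{strongrelcat} for $n = 1$ by building the required data explicitly in one shot. First I would take $X_0 = A$, $\iota_0 = \id_A$, and $\lambda = \id_A$, so that the initial equivalence is trivial. All that remains is to produce a homotopy pushout square
$$\xymatrix{
Z_0\ar[r]^{\rho_0}\ar[d]_{\rho_0'}&A\ar[d]^{\iota_X}\\
A\ar[r]_{\iota_X}&X
}$$
together with a map $\sigma_0\colon A\to Z_0$ that is a homotopy section of both $\rho_0$ and $\rho_0'$, as demanded by the dashed diagram in \defref{strongrelcat}.

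My candidate for $Z_0$ is the pointed coproduct $Y\vee A$. I would let $\rho_0$ be $f$ on the $Y$-summand and $\id_A$ on the $A$-summand, and $\rho_0'$ be the constant map to the basepoint on the $Y$-summand and $\id_A$ on the $A$-summand. Take $\sigma_0\colon A\to Y\vee A$ to be the wedge inclusion. The equalities $\rho_0\circ\sigma_0 = \id_A$ and $\rho_0'\circ\sigma_0 = \id_A$ then hold strictly, so the common-section condition is trivial.

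The substantive step is to identify the homotopy pushout of $A\xleftarrow{\rho_0} Y\vee A\xrightarrow{\rho_0'} A$ with the given homotopy cofibre $X$, verifying in addition that both induced maps $A \to X$ are homotopic to $\iota_X$. Intuitively, since both $\rho_0$ and $\rho_0'$ restrict to $\id_A$ on the $A$-summand, that summand's contribution to the double mapping cylinder merely identifies the two copies of $A$ into a single one; the $Y$-summand is then glued to this $A$ via $f$ at one end and via the basepoint at the other, producing the mapping cone $A\cup_f CY$, which models $X$ by hypothesis. To make this precise in $\cM$, I would decompose $Y\vee A$ itself as the homotopy pushout of $Y\leftarrow *\rightarrow A$ and paste homotopy pushouts using the Cube axiom from the appendix, reducing the computation to the defining homotopy pushout $A\xleftarrow{f} Y\rightarrow *$ that presents $X$.

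The main obstacle is precisely this last pushout identification. In $\Top$ it is a direct inspection of double mapping cylinders, but in a general model category it becomes a careful paste-law diagram chase; no further tools are needed beyond the Cube axiom already in use throughout the paper.
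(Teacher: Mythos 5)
Your construction is exactly the one the paper uses: $Z = A\vee Y$ with $\rho \simeq (\id_A,f)$, $\rho' \simeq \pro_1$, and common section $\inc_1$, the needed homotopy pushout being supplied by \propref{doublecofibre}. Your inline sketch of why that square is a homotopy pushout is sound, though the pasting tool is the Prism lemma~\ref{prisme} (together with \lemref{whiskercube}) rather than the Cube axiom, which governs the interaction of homotopy pullbacks with homotopy pushouts; the paper just cites \propref{doublecofibre} directly instead of reproving it.
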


\begin{proof}
To get a homotopy pushout as above, use \propref{doublecofibre} and let $Z = A \vee Y$, $\rho \simeq (\id_A,f)$ is the whisker map of $\id_A$ and $f$,
and $\rho' \simeq \pro_1 \colon A \vee Y\to A$ and $\sigma \simeq \inc_1\colon A \to A \vee Y$ are the obvious maps.
\end{proof}

In section \ref{CompSX}, we will construct a two-steps example showing that the diagonal $\Delta \colon \SX \to \SX \times \SX$ of a suspension has $\Relcat(\Delta) \leq 2$.

\smallskip
Observe that for the map $\alpha_i\colon A \to G_i(\iota_X)$ of the Ganea construction, we have $\Relcat(\alpha_i) \leq i$.
And since, clearly, $\Pushcat(\iota_X) \leq \Relcat(\iota_X)$, \propref{catretractstrong} can be extended to the following:

\begin{proposition}\label{catretractcone}
Let $\iota_Y\colon A\to Y$ be a map of $\cM$. The following conditions are equivalent:
\begin{enumerate}
\item $\secat(\iota_Y)\leq n$ (\resp $\relcat(\iota_Y)\leq n$);
\item the map $\iota_Y$ is simply (\resp relatively) dominated by a map $\iota_X\colon A\to X$ such that $\Pushcat(\iota_X)\leq n$;
\item the map $\iota_Y$ is simply (\resp relatively) dominated by a map $\iota_{X'}\colon A\to X'$ such that $\Relcat(\iota_{X'})\leq n$.
\end{enumerate}
\end{proposition}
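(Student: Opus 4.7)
The plan is to bootstrap off \propref{catretractstrong} together with the two observations highlighted in the text just before the statement, namely that $\Pushcat(\iota_X) \leq \Relcat(\iota_X)$ for every map $\iota_X$, and that the canonical map $\alpha_n \colon A \to G_n(\iota_Y)$ of the Ganea construction satisfies $\Relcat(\alpha_n) \leq n$. With these in hand, only one genuinely new implication is required.

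First, (i) $\Leftrightarrow$ (ii) is exactly \propref{catretractstrong}, so nothing needs to be done there. Next, (iii) $\Rightarrow$ (ii) is immediate: if $\iota_Y$ is simply (resp.\ relatively) dominated by $\iota_{X'}$ with $\Relcat(\iota_{X'}) \leq n$, then the same domination witnesses (ii) because $\Pushcat(\iota_{X'}) \leq \Relcat(\iota_{X'}) \leq n$.

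The remaining step is (i) $\Rightarrow$ (iii). Here I would take $\iota_{X'} := \alpha_n \colon A \to G_n(\iota_Y)$, which has $\Relcat(\alpha_n) \leq n$ by the observation above. The Ganea diagram produces $g_n \colon G_n(\iota_Y) \to Y$ with $g_n \circ \alpha_n \simeq \iota_Y$, so $g_n$ plays the role of the map $\varphi$ in \defref{dominated}. Assuming $\secat(\iota_Y) \leq n$ yields a homotopy section $\sigma$ of $g_n$, which exhibits $\iota_Y$ as simply dominated by $\alpha_n$; assuming $\relcat(\iota_Y) \leq n$ yields a section $\sigma$ with the additional property $\sigma \circ \iota_Y \simeq \alpha_n$, exhibiting $\iota_Y$ as relatively dominated by $\alpha_n$.

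There is no real obstacle: the proof is essentially a repackaging of the proof of \propref{catretractstrong}, where the map $\alpha_n$ was already used as the canonical witness, but with the sharper estimate $\Relcat(\alpha_n) \leq n$ (rather than merely $\Pushcat(\alpha_n) \leq n$) plugged in. The only thing one needs to be a little careful about is to cite explicitly the bound $\Relcat(\alpha_n) \leq n$ — justified by unwinding \defref{strongrelcat} along the $n$-step Ganea construction, where at each stage the homotopy pushout defining $G_{i+1}$ comes equipped with the map $A \to A$ (identity) factoring through $F_i$ via $\theta_i$, providing the required section $\sigma_i$.
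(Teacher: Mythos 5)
Your proposal is correct and follows exactly the route the paper takes: it cites \propref{catretractstrong} for (i) $\Leftrightarrow$ (ii), uses $\Pushcat \leq \Relcat$ for (iii) $\Rightarrow$ (ii), and upgrades (i) $\Rightarrow$ (iii) by noting that the canonical Ganea map $\alpha_n$ satisfies the stronger bound $\Relcat(\alpha_n) \leq n$ (via the sections $\theta_i$). This is precisely the two-line justification the paper gives immediately before stating the proposition.
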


\begin{lemma}\label{cylsomme}
If $\iota_X \colon A \to X$ and $A \to B$ are maps of $\cM$, consider the homotopy pushout:
$$\xymatrix{
A\ar[d]\ar[r]^{\iota_X}&X\ar[d]\\
B \ar[r]_{\kappa_S}&S
}$$
We have  $\Relcat(\kappa_S) \leq \Relcat(\iota_X)$ and $\relcat(\kappa_S) \leq \relcat(\iota_X)$.
\end{lemma}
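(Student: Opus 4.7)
The strategy is to push the entire sequence defining $\Relcat(\iota_X)$ out along the map $A\to B$. For the $\relcat$ inequality I additionally invoke the characterization by relative domination in \propref{catretractcone}.

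Given $\Relcat(\iota_X)\leq n$, take the data $(X_i, Z_i, \iota_i, \sigma_i, \rho_i, \chi_i)$ of \defref{strongrelcat} and write $p_i\colon Z_i\to X_i$ for the unnamed map in that diagram, so that $p_i\sigma_i\simeq \iota_i$. I will define $B_i$ as the homotopy pushout of $X_i\xleftarrow{\iota_i} A\to B$ and, for $i<n$, $W_i$ as the homotopy pushout of $Z_i\xleftarrow{\sigma_i} A\to B$, obtaining pushout maps $\mu_i\colon X_i\to B_i$, $\kappa_i\colon B\to B_i$, $\tau_i\colon Z_i\to W_i$, and $\sigma'_i\colon B\to W_i$. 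Since $\iota_0$ is a homotopy equivalence, $\kappa_0\colon B\to B_0$ is one as well, and $B_n=S$ with $\kappa_n\simeq \kappa_S$. The relation $\rho_i\sigma_i\simeq\id_A$ lets the pushout property of $W_i$ define $\rho'_i\colon W_i\to B$ with $\rho'_i\sigma'_i\simeq \id_B$, and the pushout square defining $B_i$ lets the pushout property of $W_i$ define $\mu^W_i\colon W_i\to B_i$ with $\mu^W_i\sigma'_i\simeq\kappa_i$.

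The crux is to check that $B_{i+1}$ is the homotopy pushout of $B_i\xleftarrow{\mu^W_i}W_i\xrightarrow{\rho'_i}B$. I expect this to come from two applications of the pasting lemma for homotopy pushouts. First, using the factorization $\iota_i\simeq p_i\sigma_i$, the rectangle with top row $A\xrightarrow{\sigma_i} Z_i\xrightarrow{p_i} X_i$ and bottom row $B\xrightarrow{\sigma'_i} W_i\xrightarrow{\mu^W_i} B_i$ has left square the pushout defining $W_i$ and outer rectangle the pushout defining $B_i$, so by pasting the right square is a pushout. Second, using $\iota_{i+1}\simeq \chi_i\iota_i$ together with $\rho'_i\tau_i\simeq (A\to B)\circ\rho_i$, the rectangle with top row $Z_i\xrightarrow{\tau_i} W_i\xrightarrow{\rho'_i} B$ and bottom row $X_i\xrightarrow{\mu_i} B_i\to B_{i+1}$ has left square the pushout just obtained; its outer rectangle is the pushout got by pasting the pushout for $X_{i+1}$ with the one defining $B_{i+1}$, so a final pasting identifies the desired right square as a pushout. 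This yields $\Relcat(\kappa_S)\leq\Relcat(\iota_X)$.

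For $\relcat(\kappa_S)\leq \relcat(\iota_X)$, the plan is to use \propref{catretractcone}: $\relcat(\iota_X)\leq n$ means $\iota_X$ is relatively dominated by some $\iota_{X'}\colon A\to X'$ with $\Relcat(\iota_{X'})\leq n$. Pushing that relative-domination diagram out along $A\to B$ produces $\kappa_{S'}\colon B\to S'$ together with maps $\varphi'\colon S'\to S$ and $\sigma'\colon S\to S'$ (defined by the universal properties of the pushouts $S$ and $S'$) witnessing $\kappa_S$ as relatively dominated by $\kappa_{S'}$. Applying the $\Relcat$ part of the lemma to $\iota_{X'}$ yields $\Relcat(\kappa_{S'})\leq n$, and a second invocation of \propref{catretractcone} gives $\relcat(\kappa_S)\leq n$. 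The principal obstacle throughout is the diagrammatic bookkeeping: organising the several homotopies coherently so that each induced map is well defined and the two pasting arguments combine cleanly to produce the claimed square.
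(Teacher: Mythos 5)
Your proposal is correct and takes essentially the same approach as the paper: both push the data of \defref{strongrelcat} out along $A\to B$ and verify, by pasting of homotopy pushouts (which is what the Prism lemma~\ref{prisme} encodes), that the resulting squares remain pushouts, and both reduce the $\relcat$ case to \propref{catretractcone}. The paper organizes the pasting as one application of the Prism lemma to a three-dimensional diagram, and for $\relcat$ pushes out the section of $\xi_n$ directly rather than an abstract relatively dominating map, but these are only presentational differences.
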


\begin{proof}
Let $\Relcat(\iota_X) = n$. Consider the sequence of homotopy pushouts as in
\defref{strongrelcat} and the sequence of maps $\xi_i$ defined after \defref{strongsecat}. This gives the top part of the next diagram. We extend it to a sequence of homotopy commutative diagrams, for $0\leq i< n$,
$$\xymatrix@C=1.5pc@R=1.5pc{
A\ar[rr]^{\sigma_i}\ar[dd]\ar[rd]^(.54){\iota_{i}}&&Z_{i}\ar[rr]^{\rho_i}\ar[ld]\ar[dd]|\hole&&A\ar[rd]^{\iota_X}\ar[ld]_(.54){\iota_{i+1}}\ar[dd]|\hole&\\
&X_{i}\ar[rr]\ar[dd]&&X_{i+1}\ar[dd]\ar[rr]_(.3){\xi_{i+1}}&&X\ar[dd]\\
B\ar[rr]_(.3){\tilde\sigma_i}|(.5)\hole\ar[rd]_{\tilde\iota_{i}}&&R_{i}\ar[ld]\ar[rr]_(.3){\tilde\rho_i}|(.5)\hole&&B\ar[rd]_{\kappa_S}\ar[ld]^{\tilde\iota_{i+1}}&\\
&S_{i}\ar[rr]&&S_{i+1}\ar[rr]&&S
}$$
building all vertical faces as homotopy pushouts.
From the Prism lemma \ref{prisme}, we know that the bottom face of the inside cube is a homotopy pushout
and that  $\tilde\rho_{i} \circ \tilde\sigma_{i} \simeq \id_B$.
Also, since $\iota_0\colon A\to X_0$ has a homotopy inverse, $\tilde\iota_0\colon B \to S_0$ has a homotopy inverse as well.
Finally, since $\xi_n = \id_X$, we may assume $S_n = S$ and $\tilde\iota_n \simeq \kappa_S$. This means $\Relcat(\kappa_S)\leq n$.

\smallskip
Now let $\relcat(\iota_X) = n$.
We can build the same diagrams, except that $\xi_n$ has only a homotopy section $\sigma$ such that $\sigma \circ \iota_X \simeq \iota_n$.
By the Prism lemma \ref{prisme}, $S_n \to S$ has a homotopy section $\tilde\sigma$ such that 
$\tilde\sigma \circ \kappa_S \simeq \tilde\iota_n$. By \propref{catretractcone}, this means $\relcat(\kappa_S) \leq n$.
\end{proof}

Observe that it is {\em not} true that $\secat(\kappa_S) \leq \secat(\iota_X)$. For instance, if $\cM$ is pointed with $\ast$ as zero object,
choose $A$ so that its suspension is not contractible, and choose $X = B = \ast$, hence $S = \Sigma A$; 
then $\secat(\iota_\ast) = 0$ while $\secat(\kappa_{\Sigma A}) = \cat(\Sigma A) = 1$.

\begin{corollary}\label{minorecatcofibre}Assume $\cM$ is pointed, and let $A\to X\to C$ be a homotopy cofibration. Then $\cat(C) \leq \relcat(X,A)$ and $\Cat(C) \leq \Relcat(X,A)$.\end{corollary}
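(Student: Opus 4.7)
The plan is to obtain this as an immediate application of \lemref{cylsomme}, specialised to the case where the second map out of $A$ is the zero map. Given the homotopy cofibration $A\to X\to C$, the definition of homotopy cofibre provides a homotopy pushout square
$$\xymatrix{
A\ar[r]^{\iota_X}\ar[d]&X\ar[d]\\
\ast\ar[r]_{\kappa_C}&C
}$$
in which the lower left corner is the zero object of $\cM$. This is exactly the configuration of \lemref{cylsomme} with $B=\ast$ and $S=C$.

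Applying \lemref{cylsomme} then yields $\Relcat(\kappa_C)\leq \Relcat(\iota_X)$ and $\relcat(\kappa_C)\leq \relcat(\iota_X)$. By the very definitions following \defref{LSganea} and \defref{strongrelcat}, the map $\kappa_C\colon \ast\to C$ satisfies $\relcat(\kappa_C)=\relcat(C,\ast)=\cat(C)$ and $\Relcat(\kappa_C)=\Relcat(C,\ast)=\Cat(C)$, while the right-hand sides are $\relcat(X,A)$ and $\Relcat(X,A)$ by definition. Combining these identifications with the two inequalities delivers both conclusions at once.

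Since \lemref{cylsomme} is already available, the only ``work'' is to recognise that the homotopy cofibre square is the specialisation needed and to unpack the notation $\cat$, $\Cat$, $\relcat$, $\Relcat$ for the base map $\ast\to C$; there is no technical obstacle to speak of, as everything is absorbed into the prior lemma. The only small thing worth remarking is that we are using the pointed hypothesis on $\cM$ precisely to ensure that $\ast$ is a legitimate choice for $B$ and that $\cat$, $\Cat$ are defined.
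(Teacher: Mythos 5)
Your proof is correct and matches what the paper intends: the corollary is stated without an explicit proof precisely because it is the specialization of \lemref{cylsomme} to $B=\ast$, giving $S=C$, together with the identifications $\relcat(C,\ast)=\cat(C)$ and $\Relcat(C,\ast)=\Cat(C)$ recorded after Definitions \ref{LSganea} and \ref{strongrelcat}. You have spelled out exactly the intended argument, including the role of the pointedness hypothesis.
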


\begin{corollary}
Assume that $\iota_X\colon A \to X$ is a `homotopy retract' of $\kappa_Y\colon B \to Y$, i.e. there exists a homotopy commutative diagram in $\cM$:
$$\xymatrix{
A\ar[r]^t\ar[d]_{\iota_X}&B\ar[d]_{\kappa_Y}\ar[r]^\zeta&A\ar[d]^{\iota_X}\\
X\ar[r]_s&Y\ar[r]_f&X
}$$
such that $f\circ s \simeq \id_X$ and $\zeta\circ t\simeq \id_A$.
Then $\relcat(\iota_X) \leq \relcat(\kappa_Y)$.
\end{corollary}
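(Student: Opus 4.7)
The plan is to exploit the functoriality of the Ganea construction on the two homotopy commutative squares provided by the retract. Let $n = \relcat(\kappa_Y)$ and apply \defref{ganea} to both $\iota_X$ and $\kappa_Y$; write $\alpha_i, g_i$ for the Ganea data of $\iota_X$ and $\alpha'_i, g'_i$ for that of $\kappa_Y$. I will construct by induction on $i$ maps $T_i\colon G_i(\iota_X) \to G_i(\kappa_Y)$ and $Z_i\colon G_i(\kappa_Y) \to G_i(\iota_X)$ satisfying the four compatibility relations
$$g'_i \circ T_i \simeq s \circ g_i, \quad g_i \circ Z_i \simeq f \circ g'_i, \quad T_i \circ \alpha_i \simeq \alpha'_i \circ t, \quad Z_i \circ \alpha'_i \simeq \alpha_i \circ \zeta.$$
The base case $T_0 = t$, $Z_0 = \zeta$ is simply the retract hypothesis. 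For the inductive step, the first compatibility for $T_i$ together with $\kappa_Y \circ t \simeq s \circ \iota_X$ gives, by the universal property of the homotopy pullback $F_i(\kappa_Y)$, a whisker map between the two $F_i$'s; then the universal property of the homotopy pushout defining $G_{i+1}$ yields $T_{i+1}$, and the updated identities at level $i+1$ follow from \lemref{whiskercube}. The construction of $Z_{i+1}$ is symmetric, using $f \circ \kappa_Y \simeq \iota_X \circ \zeta$ in place of $\kappa_Y \circ t \simeq s \circ \iota_X$.

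Once these natural-transformation-like maps are in place, chaining the last two relations with $\zeta \circ t \simeq \id_A$ gives the key identity $Z_n \circ T_n \circ \alpha_n \simeq \alpha_n$. Now let $\tau\colon Y \to G_n(\kappa_Y)$ be a homotopy section of $g'_n$ with $\tau \circ \kappa_Y \simeq \alpha'_n$, witnessing $\relcat(\kappa_Y) \leq n$, and set $\tau' := Z_n \circ \tau \circ s$. Then
$$g_n \circ \tau' \simeq f \circ g'_n \circ \tau \circ s \simeq f \circ s \simeq \id_X,$$
so $\tau'$ is a homotopy section of $g_n\colon G_n(\iota_X) \to X$, and
$$\tau' \circ \iota_X \simeq Z_n \circ \tau \circ \kappa_Y \circ t \simeq Z_n \circ \alpha'_n \circ t \simeq \alpha_n \circ \zeta \circ t \simeq \alpha_n,$$
establishing the relative condition. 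Thus $\relcat(\iota_X) \leq n = \relcat(\kappa_Y)$.

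The main obstacle is the inductive construction of $T_i$ and $Z_i$: the Ganea tower must be shown to behave functorially up to homotopy on the arrow category, with the induced maps compatible not only with the fibrations $g_i$ but also with the inclusions $\alpha_i$. This is routine given the cube and whisker machinery already deployed in the paper (notably in the proof of \propref{catretractstrong}), but requires the simultaneous bookkeeping of four homotopy commutativity relations at each inductive stage. After that, assembling $\tau'$ from $\tau$ via the retract is immediate.
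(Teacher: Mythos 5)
Your proof is correct, but it takes a genuinely different route from the paper. You directly build comparison maps $Z_i\colon G_i(\kappa_Y)\to G_i(\iota_X)$ between the Ganea towers by iterating the Join theorem~\ref{joint}, tracking compatibility with both the Ganea projections $g_i$ and the inclusions $\alpha_i$ (the latter being the crucial addition beyond what \lemref{changebase} explicitly records), and then simply conjugate the section witnessing $\relcat(\kappa_Y)\leq n$ by the retract to get $\tau'=Z_n\circ\tau\circ s$. The paper instead avoids any explicit Ganea-tower comparison: it forms the homotopy pushout $S=Y\vee_B A$ of $\kappa_Y$ along $\zeta$, checks that $q\circ s$ makes $\iota_X$ relatively dominated by $\iota_S$, then quotes \propref{catdominated} and the pushout-does-not-increase-relcat lemma~\ref{cylsomme} to conclude in three lines. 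Your version is more explicit and self-contained (it essentially re-proves a strengthened form of \lemref{changebase} internally), at the cost of heavier bookkeeping; the paper's version is shorter but leans on machinery already set up for other purposes. Two small remarks: the maps $T_i$ you construct are never actually used in the final argument, only the $Z_i$ and the relation $Z_n\circ\alpha'_n\simeq\alpha_n\circ\zeta$ are needed; and you should make explicit that the Join theorem's whisker map commutes with the legs from \emph{both} $A$ and $C$ (visible in its proof via \lemref{whiskercube}) since only the two displayed squares of \thmref{joint} are stated in the paper, and you need the one involving the $\alpha_i$'s.
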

\begin{proof}
Consider the homotopy pushout $S$ of $\kappa_Y$ and $\zeta$:
$$\xymatrix{
B\ar[d]_{\kappa_Y}\ar[r]^\zeta&A\ar[d]^{\iota_S}\\
Y\ar[r]_q&S
}$$ 
and let $j\colon S \to X$ be the whisker map of $f$ and $\iota_X$.
We have $j \circ q \circ s \simeq f \circ s\simeq \id_X$, so $q\circ s$ is a homotopy section of $j$.
Also we have $q \circ s \circ \iota_X \simeq q \circ \kappa_Y \circ t \simeq \iota_S \circ \zeta \circ t \simeq \iota_S$.
This means that $\iota_X$ is relatively dominated by $\iota_S$,
and we obtain $\relcat(\iota_X) \leq \relcat(\iota_S)$ by \propref{catdominated}.
But we also know that $\relcat(\iota_S) \leq \relcat(\kappa_Y)$ by \lemref{cylsomme}.
Hence $\relcat(\iota_X) \leq \relcat(\kappa_Y)$.
\end{proof}

\begin{lemma}\label{jointmagique}
If $\iota_Y\colon A\to Y$ and $\phi\colon M \to Y$  are maps of $\cM$, consider the following join construction:
$$\xymatrix@R=1.5pc@C=1.5pc{
&A\ar[dr]_{\iota_{\tilde M}}\ar[rrrd]^{\iota_Y}\\
P\ar[rd]_{\iota_M'}\ar[ur]^\pi&&\tilde M\ar[rr]^(.35){}&&Y\\
&M\ar[ru]\ar[rrru]_{\phi}
}$$
where the outside square is a homotopy pullback, the inside square is a homotopy pushout,
and the map $\tilde M \to Y$ is the whisker map induced by the homotopy pushout. We have
$$\Relcat(\iota_{\tilde M}) \leq \Relcat(\iota_M') \leq \Relcat(\iota_Y)$$ and
$$\relcat(\iota_{\tilde M}) \leq \relcat(\iota_M') \leq \relcat(\iota_Y).$$
\end{lemma}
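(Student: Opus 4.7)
The plan is to split the chain of inequalities into two parts: a pushout part (the left inequalities), which follows immediately from \lemref{cylsomme}, and a pullback part (the right inequalities), which is the dual statement proved by pulling the witnessing data back along $\phi$ and invoking the Cube axiom.

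For the left inequalities, the inside homotopy pushout of the join construction is precisely of the form required by \lemref{cylsomme}, with $P$ playing the role of $A$, with $\iota_M'\colon P\to M$ playing the role of $\iota_X$, and with $\pi\colon P\to A$ playing the role of the auxiliary map $A\to B$. Thus \lemref{cylsomme} directly yields $\Relcat(\iota_{\tilde M})\leq\Relcat(\iota_M')$ and $\relcat(\iota_{\tilde M})\leq\relcat(\iota_M')$.

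For the right inequality $\Relcat(\iota_M')\leq\Relcat(\iota_Y)$, suppose $\Relcat(\iota_Y)=n$ with witnessing data as in \defref{strongrelcat}: maps $A\to X_0\to\cdots\to X_n=Y$, homotopy pushouts with apices $Z_i$, sections $\sigma_i\colon A\to Z_i$, retractions $\rho_i\colon Z_i\to A$, and a homotopy equivalence $\iota_0\colon A\to X_0$. Each $X_i$ and each $Z_i$ carries a canonical map to $Y$ (via the whisker maps $\xi_i$ introduced after \defref{strongsecat}), and each of $\iota_i$, $\rho_i$, $\sigma_i$, $\chi_i$ is compatible with these projections up to homotopy. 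Form the homotopy pullback $N_i$ of $X_i\to Y$ along $\phi$ (so $N_n=M$) and the homotopy pullback $W_i$ of $Z_i\to Y$ along $\phi$; each of the structural maps pulls back to its primed counterpart between $P$, $N_i$, $W_i$, and the Cube axiom applied to the cube built over each original pushout guarantees that the pulled-back square
$$\xymatrix{W_i\ar[r]\ar[d]&P\ar[d]\\N_i\ar[r]&N_{i+1}}$$
is again a homotopy pushout. A homotopy equivalence over $Y$ pulls back to a homotopy equivalence, so $\iota_0'\colon P\to N_0$ is a homotopy equivalence; the sections and retractions pull back while preserving their defining identities. This assembles into a witness for $\Relcat(\iota_M')\leq n$. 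For $\relcat(\iota_M')\leq\relcat(\iota_Y)$, combine this with \propref{catretractcone}: if $\relcat(\iota_Y)\leq n$, choose a relative domination of $\iota_Y$ by some $\iota_{Y'}$ with $\Relcat(\iota_{Y'})\leq n$, pull the domination back along $\phi$ to a relative domination of $\iota_M'$ by the pulled-back map $P\to M'$, and apply the strong case to the latter.

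The only real obstacle is the systematic bookkeeping: verifying at each stage that the Cube axiom produces a pushout and that the sections, retractions, and base homotopy equivalence survive the pullback intact. This mirrors, in dual form, the Prism-lemma argument already used for pushouts in \lemref{cylsomme}, so no new idea is required.
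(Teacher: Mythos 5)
Your proof is correct and takes essentially the same route as the paper: the left inequalities are a direct application of \lemref{cylsomme} to the inside pushout, and the right ones are obtained by pulling the $\Relcat$-witness back along $\phi$ via a sequence of cubes, with the Cube axiom supplying the pushout faces and the Prism lemma (or your equivalent via \propref{catretractcone} and domination) handling the $\relcat$ case. The only stylistic difference is that the paper pulls back the full sequence terminating in a map $\xi_n$ with a compatible section and extracts the domination at the end, whereas you invoke \propref{catretractcone} up front to pass to a relative domination by an $\iota_{Y'}$ with $\Relcat \leq n$ and then apply the strong case; these are the same argument in slightly different order.
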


\begin{proof}
Let $\Relcat(\iota_X) = n$. Consider the sequence of homotopy pushouts as in
\defref{strongrelcat} and the sequence of maps $\xi_i$ defined after \defref{strongsecat}. This gives the bottom part of the next diagram.
We extend it to a sequence of homotopy commutative diagrams, for $0\leq i< n$,
$$\xymatrix@C=1.5pc@R=1.5pc{
P\ar[rr]^{\sigma_i'}\ar[dd]\ar[rd]^(.54){\iota_{i}'}&&Q_{i}\ar[rr]^{\rho_i'}\ar[ld]\ar[dd]|\hole&&P\ar[rd]^{\iota_M'}\ar[ld]_(.54){\iota_{i+1}'}\ar[dd]|\hole&\\
&E_{i}\ar[rr]\ar[dd]_(.3){\phi_{i}}&&E_{i+1}\ar[dd]_(.3){\phi_{i+1}}\ar[rr]&&M\ar[dd]^\phi\\
A\ar[rr]|(.5)\hole\ar[rd]_{\iota_{i}}&&Z_{i}\ar[ld]\ar[rr]|(.5)\hole&&A\ar[rd]_{\iota_Y}\ar[ld]^{\iota_{i+1}}&\\
&Y_{i}\ar[rr]_{\chi_{i}}&&Y_{i+1}\ar[rr]_{\xi_{i+1}}&&Y
}$$
 building all vertical faces as homotopy pullbacks; since $\xi_n = \id_X$ , we may assume $\phi_n = \phi$. 
From the Cube axiom \ref{cube}, we know that the top face of the inside cube is a homotopy pushout.
Notice that $\sigma_i'$ is a homotopy section of $\rho_i'$;
also, since $\iota_0$ has a homotopy inverse, $\iota_0'$ has a homotopy inverse, too;
and finally $\iota_n'\colon P\to E_n$ is $\iota_M'\colon P \to M$. This means $\Relcat(\iota_M') \leq n$.

\smallskip
Now let $\relcat(\iota_Y) = n$.
We can build the same diagrams, except that  $\xi_n$ has only a homotopy section $\sigma$ such that $\sigma \circ \iota_Y \simeq \iota_n$.
 By the Prism lemma \ref{prisme}, the map $E_n \to M$ has a homotopy section $\sigma'$ such that $\sigma' \circ \iota_M' \simeq \iota_n'$. 
By \propref{catretractcone}, this means $\relcat(\iota_M') \leq n$.

\smallskip
The remaining inequalities are direct applications of \lemref{cylsomme}.
\end{proof}

Notice the two interesting particular cases:

\begin{corollary}Assume $\cM$ is pointed. Let  $f\colon F \to E$ be the homotopy fibre of $E\to B$.
 Then $\cat(E/F) \leq \relcat(f) \leq \cat(B)$ and $\Cat(E/F) \leq \Relcat(f) \leq \Cat(B)$.\end{corollary}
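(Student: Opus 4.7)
The plan is to apply \lemref{jointmagique} to a carefully chosen instance that recovers the homotopy fibre and cofibre of the given map $E \to B$. Specifically, I would take $A = *$, $Y = B$, $M = E$, and let $\phi \colon M \to Y$ be the given map $E \to B$, so that $\iota_Y \colon A \to Y$ becomes the base point inclusion $* \to B$.

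With these choices, the ingredients of \lemref{jointmagique} translate as follows. The homotopy pullback $P$ of $\iota_Y$ and $\phi$ is by definition the homotopy fibre $F$ of $E \to B$, and under this identification the map $\iota_M' \colon P \to M$ coincides with the fibre inclusion $f \colon F \to E$. The homotopy pushout $\tilde M$ of $P \to A$ and $P \to M$ is then the homotopy cofibre of $f$, namely $E/F$, and $\iota_{\tilde M} \colon A \to \tilde M$ is the canonical base point inclusion $* \to E/F$.

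Now I would feed this into the two conclusions of \lemref{jointmagique}. The chain
$$\Relcat(\iota_{\tilde M}) \leq \Relcat(\iota_M') \leq \Relcat(\iota_Y)$$
becomes $\Relcat(* \to E/F) \leq \Relcat(f) \leq \Relcat(* \to B)$, and using the convention $\Cat(X) = \Relcat(X, *)$ recorded just after \defref{strongrelcat}, this is precisely $\Cat(E/F) \leq \Relcat(f) \leq \Cat(B)$. The parallel chain for $\relcat$ gives, via $\cat(X) = \relcat(X, *)$, the inequalities $\cat(E/F) \leq \relcat(f) \leq \cat(B)$.

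There is no real obstacle here: the entire argument is a specialization of \lemref{jointmagique}, and the only thing to check is that the pullback/pushout produced by the join construction in that lemma really are (up to homotopy equivalence) the homotopy fibre of $E \to B$ and the homotopy cofibre of $f$, which is immediate from the definitions.
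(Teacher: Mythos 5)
Your proof is correct and is exactly the specialization the paper has in mind: taking $A = *$, $Y = B$, $M = E$, $\phi = (E \to B)$ in \lemref{jointmagique} makes $P$ the homotopy fibre $F$, $\iota_M'$ the fibre inclusion $f$, and $\tilde M$ the cofibre $E/F$, whence the stated inequalities follow from the lemma's conclusions together with the conventions $\cat(X) = \relcat(X,*)$ and $\Cat(X) = \Relcat(X,*)$. The paper presents the corollary without proof as one of the ``two interesting particular cases'' of \lemref{jointmagique}, which is precisely your argument.
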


\begin{corollary}\label{relcatganea}
Consider the Ganea construction of any map $\iota_X\colon A\to X$ made in \defref{ganea}.
For any $i \geq 0$, we have $\relcat(\alpha_{i+1})\leq \relcat(\beta_i) \leq \relcat(\iota_X)$.
\end{corollary}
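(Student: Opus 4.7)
The plan is to recognize that the statement is essentially a translation exercise: the Ganea construction at stage $i$ is literally an instance of the join construction appearing in \lemref{jointmagique}, so the corollary should follow by plugging in.

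First, I would identify the data. Take $\iota_Y = \iota_X \colon A \to X$ and $\phi = g_i \colon G_i \to X$ in the setup of \lemref{jointmagique}. Since homotopy pullbacks are unique up to homotopy equivalence, the object $P$ appearing in the lemma (the homotopy pullback of $\iota_X$ along $g_i$) can be taken to be $F_i$ from \defref{ganea}, with $\pi \simeq \eta_i$ and $\iota_M' \simeq \beta_i \colon F_i \to G_i$. The inside square in the lemma is then the homotopy pushout of $\eta_i$ and $\beta_i$, whose apex is by definition $G_{i+1}$, so $\tilde M \simeq G_{i+1}$, the map $\iota_{\tilde M} \colon A \to \tilde M$ coincides with $\alpha_{i+1} \colon A \to G_{i+1}$, and the whisker map $\tilde M \to Y$ is $g_{i+1}$.

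With this dictionary in place, \lemref{jointmagique} immediately yields
\[
\relcat(\alpha_{i+1}) = \relcat(\iota_{\tilde M}) \leq \relcat(\iota_M') = \relcat(\beta_i) \leq \relcat(\iota_Y) = \relcat(\iota_X),
\]
which is exactly the desired chain. The same argument with the strong version from \lemref{jointmagique} gives the analogous bound for $\Relcat$.

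There is essentially no obstacle here; the only small point to be careful about is that the homotopy pullback and pushout in the two definitions really do match (rather than just yielding equivalent but \emph{a priori} different whisker maps). This is handled by the fact that the constructions are determined up to homotopy equivalence by their universal properties, and \propref{catretractcone} (combined with the homotopy-invariance built into our definitions of $\secat$, $\relcat$, $\Relcat$) ensures that replacing any of the objects by an equivalent one does not change the invariants in question.
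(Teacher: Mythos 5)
Your proposal is correct and is exactly the paper's intended argument: the Ganea construction at stage $i$ is the special case of the join construction in \lemref{jointmagique} with $\iota_Y = \iota_X$, $\phi = g_i$, $P = F_i$, $\pi = \eta_i$, $\iota_M' = \beta_i$, $\tilde M = G_{i+1}$, and $\iota_{\tilde M} = \alpha_{i+1}$. The paper presents the corollary as an immediate specialization (``Notice the two interesting particular cases''), which matches your dictionary precisely.
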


We end this subsection by observing that, clearly:

\begin{lemma}\label{cylcone}
Suppose we have a homotopy commutative diagram
where the square is a homotopy pushout:
$$\xymatrix{
A\ar[r]^\sigma\ar[rd]_{\iota_X}&Z\ar[r]^{\rho}\ar[d]^{\tau}&A\ar[d]^{\iota_C}\\
&X\ar[r]_\chi&C 
}$$
and where $\rho\circ\sigma\simeq\id_A$.
Then, we have 
$\Relcat(\iota_C)\leq \Relcat(\iota_X)+1$.
\end{lemma}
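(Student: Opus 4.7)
The plan is to simply append one more step to a sequence of pushouts that certifies $\Relcat(\iota_X)\leq n$. Let $n = \Relcat(\iota_X)$ and take the data from \defref{strongrelcat}: a map $\iota_0\colon A \to X_0$ with homotopy inverse $\lambda$, and, for $0 \leq i < n$, homotopy pushouts
$$\xymatrix{
Z_i\ar[r]\ar[d]&A\ar[d]^{\iota_{i+1}}\\
X_i\ar[r]_{\chi_i}&X_{i+1}
}$$
with sections $\sigma_i\colon A \to Z_i$ such that $\rho_i \circ \sigma_i \simeq \id_A$, and with $X_n = X$, $\iota_n \simeq \iota_X$.

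To extend this sequence by one step, I would set $Z_n := Z$, $\rho_n := \rho$, $\sigma_n := \sigma$, $X_{n+1} := C$, $\chi_n := \chi$, $\iota_{n+1} := \iota_C$, and take the structure map $Z_n \to X_n$ to be $\tau$. The right-hand square in the hypothesis is a homotopy pushout by assumption, $\rho_n \circ \sigma_n \simeq \id_A$ is given, and $\tau \circ \sigma \simeq \iota_X \simeq \iota_n$ from the left triangle of the hypothesis diagram. This is exactly the data needed at step $n$ of \defref{strongrelcat}.

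The only small check is that $\chi_n \circ \iota_n \simeq \iota_{n+1}$ (so that the enlarged sequence still satisfies $X_{n+1} = C$ and $\iota_{n+1} \simeq \iota_C$ in a compatible way). This follows from the chain
$$\iota_C \simeq \iota_C \circ \rho \circ \sigma \simeq \chi \circ \tau \circ \sigma \simeq \chi \circ \iota_X,$$
using $\rho \circ \sigma \simeq \id_A$, the homotopy commutativity of the pushout square, and the left triangle of the hypothesis. Hence the extended sequence has length $n+1$ and certifies $\Relcat(\iota_C) \leq n+1$. There is no real obstacle here; the statement is essentially a bookkeeping lemma asserting that the hypothesis packages precisely the data of one extra step in the definition of strong relative category.
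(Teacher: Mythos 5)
Your proof is correct and is exactly the argument the paper has in mind: the paper gives no explicit proof, introducing the lemma with ``We end this subsection by observing that, clearly,'' and the intended justification is precisely your observation that the hypothesis packages one additional step in Definition~\ref{strongrelcat}, so the certifying sequence for $\Relcat(\iota_X)\leq n$ can be extended by setting $Z_n=Z$, $\rho_n=\rho$, $\sigma_n=\sigma$, $\tau_n=\tau$, $\chi_n=\chi$, $X_{n+1}=C$, $\iota_{n+1}=\iota_C$, with the three required conditions (pushout square, $\rho\circ\sigma\simeq\id_A$, and $\tau\circ\sigma\simeq\iota_X\simeq\iota_n$) all supplied by the hypotheses. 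The only tiny slip is that in summarizing the definition you mention only $\rho_i\circ\sigma_i\simeq\id_A$ and omit the companion condition $\tau_i\circ\sigma_i\simeq\iota_i$ visible in the diagram of Definition~\ref{strongrelcat}, but you do invoke it correctly later when checking $\tau\circ\sigma\simeq\iota_X$; also, your final verification of $\chi\circ\iota_X\simeq\iota_C$ is not a separate condition to check but a consequence of the others, as your own chain of homotopies shows.
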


\subsection{Comparing all these invariants}

Notice that, for any map $\iota_X\colon A\to X$, \propref{catretractstrong} implies $\relcat(\iota_X)\leq \Pushcat(\iota_X)$. On the other hand, we have obvious inequalities: $\secat(\iota_X) \leq \relcat(\iota_X)$ and $\Pushcat(\iota_X)\leq \Relcat(\iota_X)$. One might think that these four integers could be quite different; indeed, for instance, $\secat(\iota_X) = 0$ iff $\iota_X$ has a homotopy section, while $\relcat(\iota_X) = 0$ iff $\iota_X$ is a homotopy equivalence. But in fact the four integers can  differ only by 1, as is shown by the following result, which is an enhancement of a classical result of F.~Takens \cite{Tak70}.

\begin{theorem}\label{takens}
For any map $\iota_X\colon A\to X$  of $\cM$,
we have:
$$\secat(\iota_X)\leq \relcat(\iota_X) \leq \Pushcat(\iota_X) \leq \Relcat(\iota_X)\leq \secat(\iota_X)+1.$$
\end{theorem}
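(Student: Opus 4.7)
The plan is to prove the four inequalities in the chain from left to right, with the bulk of the work in the last one.

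For the first three inequalities, I would rely on earlier results. The inequality $\secat(\iota_X) \leq \relcat(\iota_X)$ is immediate from \defref{LSganea}, since the relcat condition adds the compatibility $\sigma \circ \iota_X \simeq \alpha_n$ on top of the existence of a section. For $\relcat(\iota_X) \leq \Pushcat(\iota_X)$, I would use \propref{catretractstrong}: given a Pushcat-sequence of length $n$ for $\iota_X$, the map $\iota_X$ is relatively dominated by itself via $\varphi = \sigma = \id_X$, which yields $\relcat(\iota_X) \leq n$. For $\Pushcat(\iota_X) \leq \Relcat(\iota_X)$, I would compare \defref{strongsecat} and \defref{strongrelcat} directly: every $\Relcat$-sequence forgets, by discarding the sections $\sigma_i$ and retractions $\rho_i$, to a valid $\Pushcat$-sequence with the same $X_i$, $\chi_i$, and pushout structure.

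For the main inequality $\Relcat(\iota_X) \leq \secat(\iota_X) + 1$ (a Takens-type statement), I would set $n = \secat(\iota_X)$ and choose a homotopy section $\sigma\colon X \to G_n$ of $g_n$. The first $n$ steps of a $\Relcat$-sequence of length $n+1$ for $\iota_X$ would come directly from the Ganea construction: for $0 \leq i < n$ take $X_i = G_i$, $\iota_i = \alpha_i$, $Z_i = F_i$, $\rho_i = \eta_i$, $\sigma_i = \theta_i$, $\chi_i = \gamma_i$. These satisfy \defref{strongrelcat} because $\eta_i \circ \theta_i \simeq \id_A$ and $\beta_i \circ \theta_i \simeq \alpha_i$ by construction, and they already witness $\Relcat(\alpha_n) \leq n$. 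The crucial final step is to produce a homotopy pushout
$$\xymatrix{
Z\ar[r]^{\rho}\ar[d]_{\tau}&A\ar[d]^{\iota_X}\\
G_n\ar[r]_{g_n}&X
}$$
together with a section $\sigma'\colon A \to Z$ satisfying $\rho\circ\sigma' \simeq \id_A$ and $\tau\circ\sigma' \simeq \alpha_n$. Once such a pushout is exhibited, \lemref{cylcone} applied with this square and the map $\alpha_n$ (of $\Relcat \leq n$) yields $\Relcat(\iota_X) \leq \Relcat(\alpha_n) + 1 \leq n + 1$.

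The main obstacle I foresee is precisely the construction of this final $Z$. The most natural candidate, $Z = F_n$ with $\rho = \eta_n$, $\tau = \beta_n$ and $\sigma' = \theta_n$, verifies all section conditions immediately, but its homotopy pushout is $G_{n+1}$, and the Ganea map $g_{n+1}\colon G_{n+1} \to X$ is not generally a weak equivalence merely because $g_n$ admits a section. To force the pushout to compute $X$ itself, $Z$ must be enlarged using $\sigma$: one should attach to $F_n$ additional data that identifies, inside $G_n$, the two homotopically distinct lifts $\alpha_n$ and $\sigma\circ\iota_X$ of $\iota_X$ along $g_n$, thereby collapsing the discrepancy between $G_{n+1}$ and $X$. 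Verifying that the enlarged pushout still admits a common section $\sigma'$ extending $\theta_n$ and satisfying $\tau\circ\sigma' \simeq \alpha_n$ is the technical heart of the argument, and where I expect the delicate bookkeeping with the Cube axiom and the Prism lemma (as used in \lemref{cylsomme} and \lemref{jointmagique}) to intervene.
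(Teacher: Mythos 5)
Your first three inequalities are handled correctly and match the paper's brief observations. For the fourth, you have correctly identified the target shape of the argument --- reduce to \lemref{cylcone} applied to a single homotopy pushout landing in $X$, with a common section --- and you correctly see why the naive choice $Z = F_n$ fails (the pushout is $G_{n+1}$, not $X$). But the construction you describe as ``enlarging $F_n$'' is exactly the missing content, and your framing keeps $g_n\colon G_n \to X$ as the bottom edge of the pushout, which is not how the paper (or any straightforward argument) makes it work: there is no evident $Z$ with a homotopy pushout $Z \rightrightarrows (A, G_n) \to X$ having $\iota_X$ and $g_n$ as cofiber maps.

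The paper's resolution is to \emph{change the bottom-left vertex}. Take the homotopy pullback $P$ of $\alpha_n\colon A \to G_n$ and $\sigma\colon X \to G_n$, and form the join $\tilde X = X \vee_P A$. \lemref{jointmagique} (not a direct inspection of the Ganea tower, as you suggest) gives $\Relcat(\iota_{\tilde X}) \leq \Relcat(\alpha_n) \leq n$. The key point you are missing is that $\iota_X'\colon P \to X$ factors through $\iota_X\colon A \to X$, because $\iota_X' \simeq g_n \circ \sigma \circ \iota_X' \simeq g_n \circ \alpha_n \circ \pi \simeq \iota_X \circ \pi$; this uses that $\sigma$ is a \emph{section}. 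This factorization allows one to form $\tilde A = A \vee_P A$ (pushout of $\pi$ with itself), with whisker map $j\colon \tilde A \to A$ and a section $b\colon A \to \tilde A$ with $j \circ b \simeq \id_A$, and to assemble a $3\times 2$ grid of homotopy pushouts in which the bottom-right square
$\tilde A \to A$, $\tilde A \to \tilde X$, $A \to X$, $\tilde X \to X$
is exactly the input to \lemref{cylcone}, with $f\circ b \simeq \iota_{\tilde X}$ verified by the Prism lemma. So the correct bottom-left object in the \lemref{cylcone} diagram is $\tilde X$, not $G_n$, and the correct $Z$ is $\tilde A = A \vee_P A$, not an enlargement of $F_n$. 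Your outline stops short precisely at the step where the pullback $P$ and the join $\tilde X$ enter, which is the actual content of this Takens-type bound.
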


\begin{proof}
We have just observed the first three inequalities. 

Let $\secat(\iota_X)=n$ and let $\sigma\colon X\to G_n$ be a homotopy section of the Ganea map $g_n\colon G_n\to X$. 
Use $\sigma$ as $\phi$ and $\alpha_n: A \to G_n$ as $\iota_Y$ in \lemref{jointmagique} to get $\tilde X = X \vee_P A$ with $\Relcat(\iota_{\tilde X}) \leq \Relcat(\alpha_n) \leq n$.

By definition of $P$, we have a homotopy commutative diagram
$$\xymatrix{
P\ar[r]^{\iota_X'}\ar[d]_\pi&X\ar[d]_\sigma\ar@{=}[dr]&\\
A\ar[r]^{\alpha_n}\ar@/_1pc/[rr]_{\iota_X}&G_n\ar[r]^-{g_n}&X
}$$
therefore the map
$\iota_X'\colon P\to X$ factors through $\iota_X\colon A\to X$ up to homotopy.
This factorization allows the construction of the following homotopy commutative diagram where each square is a homotopy pushout:
$$\xymatrix{
P\ar[r]^\pi\ar[d]_\pi&A\ar[d]_b\ar@{=}[dr]\\
A\ar[r]_a\ar[d]_{\iota_X}&\tilde A\ar[d]_f\ar[r]_j&A\ar[d]^{\iota_X}\\
X\ar[r]_s&\tilde X\ar[r]_r&X
}$$
We have $\iota_X\circ \pi \simeq \iota_X'$, so $f\circ b \simeq \iota_{\tilde X}$ by the Prism lemma \ref{prisme}.
The map $j$ is the whisker map of two copies of $\id_A$ induced by the homotopy pushout $\tilde A$; so $j\circ b \simeq \id_A$.
As a consequence of  \lemref{cylcone}, we have $\Relcat(\iota_X)\leq \Relcat(\iota_{\tilde X})+1 \leq n + 1$.
\end{proof}

We recover \propref{cylcofibre} as a corollary of \thmref{takens}:

\begin{corollary}
Assume $\cM$ is pointed. Given a homotopy cofibration sequence $A \to B \to C$, we have $\Relcat(C,B) \leq 1$.
\end{corollary}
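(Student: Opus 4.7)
The plan is to combine \thmref{takens} and \lemref{cylsomme}, applied not to the map $B\to C$ itself but to the terminal map $A\to\ast$. A head-on application of \thmref{takens} to $\iota\colon B\to C$ would require $\secat(\iota)\leq 0$, i.e.\ a homotopy section of the quotient, and this fails in general (already $D^1\to S^1$ from the cofibration $S^0\to D^1\to S^1$ admits no section). So I would instead push the problem one step back to a map whose sectional category is tautologically zero.

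Concretely, since $\cM$ is pointed, the first observation is that $\secat(A\to\ast)=0$: the zeroth Ganea stage of $A\to\ast$ is $g_0=(A\to\ast)$ itself, and the basepoint $\ast\to A$ is automatically a homotopy section, since every composition into $\ast$ is forced to equal $\id_\ast$. \thmref{takens} then delivers
\[
\Relcat(A\to\ast)\leq \secat(A\to\ast)+1 \leq 1.
\]

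To finish, I would note that a homotopy cofibration sequence $A\to B\to C$ presents $C$ as the homotopy pushout of $A\to\ast$ along $A\to B$, so that the quotient $B\to C$ is exactly the bottom edge $\kappa_S$ produced by \lemref{cylsomme} with $\iota_X=(A\to\ast)$. That lemma then gives $\Relcat(B\to C)\leq \Relcat(A\to\ast)\leq 1$, which is the desired conclusion. The only genuine obstacle is the initial conceptual step: seeing that one should not try to bound $\secat(B\to C)$ directly, but rather transfer a bound from the tautological case $A\to\ast$ through \lemref{cylsomme}. Once the auxiliary map is identified, the argument is a two-line assembly of already-proven results.
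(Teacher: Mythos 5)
Your proof is correct and is essentially identical to the paper's: both establish $\secat(A\to\ast)=0$ via the basepoint section, invoke \thmref{takens} to get $\Relcat(A\to\ast)\leq 1$, and then transfer the bound to $B\to C$ by \lemref{cylsomme} applied to the defining pushout of the cofibration.
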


\begin{proof}
Since the map $\iota_\ast \colon A\to \ast$ to the zero object has a homotopy section $\ast \to A$, we have $\secat(\iota_\ast) = 0$.
So \thmref{takens} gives $\Relcat(\iota_\ast) \leq 1$, and \lemref{cylsomme} gives the result.
\end{proof}

The following corollary shows that the sectional and relative categories of a map differ whenever the category of its homotopy cofibre is greater than the category of its target:

\begin{corollary}\label{cofdiff}
Assume $\cM$ is pointed. For any map $\iota_X\colon A \to X$ with homotopy cofibre $C$ such that $\cat(X) < \cat(C)$, we have $\secat(\iota_X) = \cat(X)$ and $\relcat(\iota_X) = \cat(C) = \cat(X) + 1$. 
\end{corollary}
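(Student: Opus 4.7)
The plan is to squeeze every quantity appearing in the statement between $\cat(X)$ and $\cat(X)+1$ by chaining three bounds. First, \cororef{minorecatcofibre} applied to the homotopy cofibration $A\to X\to C$ gives $\cat(C)\leq\relcat(\iota_X)$. Second, \thmref{takens} gives $\relcat(\iota_X)\leq\secat(\iota_X)+1$. Third, I would invoke the basic bound $\secat(\iota_X)\leq\cat(X)$, valid for any map into $X$.

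Chaining these yields
$$\cat(C)\leq\relcat(\iota_X)\leq\secat(\iota_X)+1\leq\cat(X)+1,$$
whereas the hypothesis $\cat(X)<\cat(C)$ reads as $\cat(X)+1\leq\cat(C)$. The whole chain must then collapse into equalities: $\cat(C)=\relcat(\iota_X)=\cat(X)+1$, and consequently $\secat(\iota_X)=\cat(X)$, exactly the two claims.

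The only step that is not a direct citation of a result already in the paper is the opening bound $\secat(\iota_X)\leq\cat(X)$, and this is where I expect the minor obstacle to lie. I would justify it within the paper's framework as follows: the basepoint inclusion $\ast\to A$ is a morphism over $X$ from $\iota_\ast\colon\ast\to X$ to $\iota_X\colon A\to X$, and functoriality of the Ganea construction of \defref{ganea} (by taking compatible homotopy pullbacks and pushouts at each level) produces a comparison map $G_n(\ast\to X)\to G_n(\iota_X)$ commuting with the Ganea projections to $X$. Post-composing any homotopy section of $G_n(\ast\to X)\to X$ with this comparison then yields a homotopy section of $G_n(\iota_X)\to X$. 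Since $\cat(X)=\secat(\ast\to X)$ by definition, this delivers the required inequality, and the remainder of the argument is pure arithmetic as sketched above.
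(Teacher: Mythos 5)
Your argument is correct and follows the same route as the paper: combine Corollary~\ref{minorecatcofibre}, Theorem~\ref{takens}, and $\secat(\iota_X)\leq\cat(X)$, then let the hypothesis $\cat(X)<\cat(C)$ force the chain of inequalities to collapse. The only superfluous work is your functoriality digression for $\secat(\iota_X)\leq\cat(X)$: that inequality is already stated in the paper immediately after Proposition~\ref{secatbut} (apply it to the factorization of $\ast\to X$ through $\iota_X$), so no appeal to Lemma~\ref{changebase} or to a bespoke Ganea comparison map is needed.
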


\begin{proof}
 By \propref{secatbut}, we have $\secat(\iota_X) \leq \cat(X)$ and by  \cororef{minorecatcofibre}, we have $\cat(C) \leq \relcat(\iota_X)$. So, by the hypothesis, $\secat(\iota_X)$ and $\relcat(\iota_X)$ must differ at least by 1. On the other hand, by \thmref{takens}, $\secat(\iota_X)$ and $\relcat(\iota_X)$ can differ at most by 1. Hence we obtain the desired equalities.
\end{proof}

\begin{example}\label{hopf}
The homotopy cofibre of the Hopf fibration $h\colon S^3 \to S^2$ is $\CP^2$ and we have $\cat(S^2) = 1 < \cat(\CP^2) = 2$. 
Thus $\secat(h) = 1$ and $\relcat(h) = 2$.
\end{example}

In particular, \cororef{cofdiff} shows that the category $\cat(C)$ of the homotopy cofibre $C$ of any map $A\to X$ is always less than or equal to $\cat(X)+1$ (a well-known result in the context of topological spaces). We extend this with the following proposition:

\begin{proposition}\label{relcatcone}
With the same notations and hypotheses as in \lemref{pushcone}, we have $$\relcat(\iota_C)\leq \relcat(\iota_X)+1.$$
\end{proposition}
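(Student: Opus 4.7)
The plan is to reduce the problem from $\relcat$ to $\Relcat$ via \propref{catretractcone}, and then build a suitably augmented homotopy pushout $C''$ to which \lemref{cylcone} applies. Set $n = \relcat(\iota_X)$; by \propref{catretractcone}, there exists $\iota_{X'}\colon A \to X'$ with $\Relcat(\iota_{X'}) \leq n$, together with $\varphi\colon X' \to X$ such that $\varphi \circ \iota_{X'} \simeq \iota_X$, and a homotopy section $\sigma\colon X \to X'$ of $\varphi$ satisfying $\sigma \circ \iota_X \simeq \iota_{X'}$.

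Form the auxiliary homotopy pushout $C''$ with legs $(\rho,\id_A)\colon Z \sqcup A \to A$ and $(\sigma\rho', \iota_{X'})\colon Z \sqcup A \to X'$, where $\sqcup$ denotes the coproduct in $\cM$. The inclusion $\inc_2\colon A \to Z \sqcup A$ of the second summand is a homotopy section of $(\rho,\id_A)$ whose composite with $(\sigma\rho', \iota_{X'})$ equals $\iota_{X'}$, so \lemref{cylcone} gives $\Relcat(\iota_{C''}) \leq \Relcat(\iota_{X'}) + 1 \leq n + 1$. The augmentation of $Z$ by an $A$-summand is the crucial design choice: it simultaneously furnishes the homotopy section required by \lemref{cylcone} and encodes the compatibility $\chi'' \circ \iota_{X'} \simeq \iota_{C''}$ (from the $A$-component of the pushout relation) needed below.

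Finally, verify that $\iota_C$ is relatively dominated by $\iota_{C''}$. The dominating map $\varphi_C\colon C'' \to C$ is the whisker map induced by $\chi\varphi\colon X' \to C$ and $\iota_C\colon A \to C$, which agree over $Z \sqcup A$: on $Z$ via $\chi\varphi\sigma\rho' \simeq \chi\rho' \simeq \iota_C\rho$, and on $A$ via $\chi\varphi\iota_{X'} \simeq \chi\iota_X \simeq \iota_C$; hence $\varphi_C \circ \iota_{C''} \simeq \iota_C$. A section $\sigma_C\colon C \to C''$ of $\varphi_C$ is the whisker map induced by $\chi''\sigma\colon X \to C''$ and $\iota_{C''}\colon A \to C''$, which agree over $Z$ by restricting the pushout relation for $C''$; it satisfies $\sigma_C \circ \iota_C \simeq \iota_{C''}$ and, checking $\varphi_C \sigma_C \simeq \id_C$ on the two legs of the pushout $C$, is a homotopy section of $\varphi_C$. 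Combining \cororef{catdominated} with \thmref{takens} yields $\relcat(\iota_C) \leq \relcat(\iota_{C''}) \leq \Relcat(\iota_{C''}) \leq n + 1 = \relcat(\iota_X) + 1$.
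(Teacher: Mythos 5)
Your proof is correct, and it takes a genuinely different route from the paper's. The paper constructs the explicit join $\tilde X = X\bowtie_{G_n}A$ already used in the proof of \thmref{takens}, establishes $\iota_{\tilde X}\simeq s\circ\iota_X$ via the homotopy-epimorphism argument on $\pi\colon P\to A$, builds a $3\times 3$ grid of homotopy pushouts producing $\tilde C$, and applies \lemref{pushcone} to get $\Pushcat(\tilde C,A)\leq n+1$, closing with \propref{catretractstrong}. You instead black-box \propref{catretractcone} to get an abstract $\iota_{X'}\colon A\to X'$ with $\Relcat(\iota_{X'})\leq n$ relatively dominating $\iota_X$ (so the relation $\sigma\circ\iota_X\simeq\iota_{X'}$ comes for free, bypassing the homotopy-epimorphism step), and build the auxiliary cone in a single pushout over the augmented object $Z\sqcup A$. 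The extra $A$-summand is a nice device: its inclusion supplies the section that \lemref{cylcone} requires, and restricting the pushout relation to that summand encodes exactly the compatibility $\chi''\circ\iota_{X'}\simeq\iota_{C''}$ that the paper secures by the more elaborate join/grid construction. Both proofs produce an auxiliary object ($\tilde C$, \resp $C''$) that relatively dominates $\iota_C$ and has strong cone length at most $n+1$, but the objects and the lemmas invoked (\lemref{pushcone} vs.\ \lemref{cylcone}) are different. Your version is more modular and a little shorter; the paper's is more explicit and reuses its own \thmref{takens} machinery. One minor remark: you could finish by citing \propref{catretractcone} (iii)$\Rightarrow$(i) directly rather than chaining \cororef{catdominated} with \thmref{takens}, since you have already exhibited relative domination by a map of strong relative category at most $n+1$.
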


\begin{proof}
Suppose $\relcat(\iota_X)\leq n$ and consider the section $\sigma\colon X\to G_n(\iota_X)$ of the map $g_n\colon G_n(\iota_X)\to X$ given by \defref{LSganea}.
 Construct $\tilde X$ with $\Relcat(\tilde X,A) \leq n$ as in the proof of \thmref{takens}.
 Since $\sigma \circ \iota_X \simeq \alpha_n$ we get a whisker map $A \to P$:
$$\xymatrix{
A\ar@{.>}[r]\ar@{=}[rd]\ar@/^1pc/[rr]^{\iota_X}&P\ar[r]\ar[d]^{\pi}&X\ar[d]^{\sigma}\\
&A\ar[r]_{\alpha_n}&G_n 
}$$
thus $\pi$ is a homotopy epimorphism, which implies that $a \simeq b$;
therefore $\iota_{\tilde X} \simeq f\circ b \simeq f \circ a \simeq s \circ \iota_X$.
Now consider the following homotopy pushouts
$$\xymatrix{
{}&A\ar[r]^a\ar[d]_{\iota_X}&\tilde A\ar[r]^j\ar[d]_f&A\ar[d]_{\iota_X}\ar@/^1.5pc/[dd]^{\iota_C}\\
Z\ar[r]^{\tau}\ar[d]_{\rho}&X\ar[d]_\chi\ar[r]^s&
\tilde X\ar[d]_{\tilde\chi}\ar[r]^r&X\ar[d]_\chi\\
A\ar[r]_{\iota_C}&C\ar[r]_{s'}&\tilde C\ar[r]_-{r'}&C
}$$
Recall that $j\circ a \simeq \id_A$, so $r\circ s \simeq \id_X$ and $r'\circ s' \simeq \id_C$; we deduce that $\iota_C\colon A\to C$ is relatively dominated by $s' \circ \iota_C\colon A\to \tilde C$. 
On the other hand, we have a homotopy pushout
$$\xymatrix{%
Z\ar[r]^{\rho}\ar[d]_{s\,\tau}&A\ar[d]^{s'\,\iota_C}\\  
\tilde X\ar[r]_{\tilde\chi}&\tilde C
}$$
Moreover $\tilde \chi \circ \iota_{\tilde X} \simeq \tilde \chi \circ s \circ \iota_X  \simeq s' \circ \chi \circ \iota_X \simeq s' \circ\iota_C$; 
so by \lemref{pushcone} we see that $\Pushcat(\tilde C,A)\leq n+1$.
 Finally we deduce from \propref{catretractstrong} that 
$\relcat(C,A)\leq n+1$.
\end{proof}

We already know that $\relcat(G_i(\iota_X),A)$ is less than or equal to
both $i$ and $\relcat(X,A)$. In fact, one can make this relation more precise.
The following is an extension of a result of O.~Cornea \cite{Cor94}.

\begin{proposition}\label{cornea}
Let $\iota_X\colon A\to X$ be any map of $\cM$.
Consider the map $\alpha_i\colon A\to G_i(\iota_X)$ of the Ganea construction.
We have:
$$\relcat(\alpha_i)= \min\{ i , \relcat(\iota_X) \}.$$
\end{proposition}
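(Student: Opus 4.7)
The plan is to prove the two inequalities $\relcat(\alpha_i) \leq \min\{i, \relcat(\iota_X)\}$ and $\relcat(\alpha_i) \geq \min\{i, \relcat(\iota_X)\}$ separately. For the upper bound, I will combine two facts: first, the remark just before \propref{catretractcone} states that $\Relcat(\alpha_i) \leq i$, so by \thmref{takens} one has $\relcat(\alpha_i) \leq \Relcat(\alpha_i) \leq i$; second, \cororef{relcatganea} gives $\relcat(\alpha_j) \leq \relcat(\iota_X)$ for every $j \geq 1$ (and trivially for $j=0$). Together these yield $\relcat(\alpha_i) \leq \min\{i, \relcat(\iota_X)\}$.

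The lower bound will hinge on a single observation, which I apply in two cases. Whenever $\relcat(\iota_X) \leq i$, by \defref{LSganea} the Ganea map $g_i\colon G_i(\iota_X) \to X$ admits a homotopy section $s$ with $s \circ \iota_X \simeq \alpha_i$ (propagating the section from $g_{\relcat(\iota_X)}$ through the $\gamma$-maps of \defref{ganea} when necessary), so the pair $(g_i, s)$ exhibits $\iota_X$ as relatively dominated by $\alpha_i$ in the sense of \defref{dominated}; hence \cororef{catdominated} gives $\relcat(\iota_X) \leq \relcat(\alpha_i)$. In the case $\relcat(\iota_X) \leq i$, this, combined with the upper bound, yields the desired equality at once.

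It then remains to handle the case $\relcat(\iota_X) > i$, where I need $\relcat(\alpha_i) \geq i$. I will argue by contradiction: assume $\relcat(\alpha_i) \leq k$ for some $k < i$. By \propref{catretractcone} there is a map $\iota'\colon A \to X'$ with $\Relcat(\iota') \leq k$ relatively dominating $\alpha_i$, via maps $\varphi\colon X' \to G_i$ and $\tau\colon G_i \to X'$ satisfying $\varphi \circ \tau \simeq \id$, $\varphi \circ \iota' \simeq \alpha_i$, and $\tau \circ \alpha_i \simeq \iota'$. The composite $\psi = g_i \circ \varphi\colon X' \to X$ then satisfies $\psi \circ \iota' \simeq \iota_X$. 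If I can show that $\iota_X$ is relatively dominated by $\iota'$ via $\psi$, then \cororef{catdominated} gives $\relcat(\iota_X) \leq \relcat(\iota') \leq \Relcat(\iota') \leq k < i$, contradicting $\relcat(\iota_X) > i$.

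The hard part will be producing a homotopy section $\rho\colon X \to X'$ of $\psi$ with $\rho \circ \iota_X \simeq \iota'$. The obvious candidate $\rho = \tau \circ s$ demands a compatible section $s\colon X \to G_i$ of $g_i$, which is essentially the statement $\relcat(\iota_X) \leq i$ that we wish to conclude, so the main obstacle is avoiding this circularity. I expect the resolution to use the strict inequality $k < i$ by pulling the sectional data witnessing $\relcat(\alpha_i) \leq k$ out of the Ganea tower of $\alpha_i$ and transporting it across the whisker map $g_i$, via an inductive application of the join construction of \lemref{jointmagique} together with the Prism and Cube lemmas; this produces the required section of $\psi$ directly without first assuming $\relcat(\iota_X) \leq i$.
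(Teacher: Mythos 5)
Your upper bound ($\relcat(\alpha_i)\leq i$ from $\Relcat(\alpha_i)\leq i$ and \thmref{takens}, combined with \cororef{relcatganea}) and your treatment of the case $\relcat(\iota_X)\leq i$ (relative domination of $\iota_X$ by $\alpha_i$ via the propagated section, then \cororef{catdominated}) both match the paper exactly. The gap is in the case $\relcat(\iota_X)>i$, where you need $\relcat(\alpha_i)\geq i$. Your final paragraph is an acknowledged sketch, not a proof: the needed section $\rho\colon X\to X'$ of $\psi=g_i\circ\varphi$ with $\rho\circ\iota_X\simeq\iota'$ is never constructed, your candidate $\tau\circ s$ presupposes a section $s$ of $g_i$ (which is essentially $\relcat(\iota_X)\leq i$, the thing you are refuting), and the proposed fix --- ``pulling the sectional data out of the Ganea tower via the join construction'' --- is wishful rather than substantive. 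Indeed, what you are trying to prove on this route (that $\relcat(\alpha_i)\leq k<i$ forces $\relcat(\iota_X)\leq k$) is essentially the proposition itself, so the circularity you identified is real and you do not escape it.

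The paper sidesteps all of this by working on the pushout side, using \propref{relcatcone}. The Ganea step from $G_j$ to $G_{j+1}$ is a homotopy pushout of the span $A\leftarrow F_j\to G_j$ with $\gamma_j\circ\alpha_j\simeq\alpha_{j+1}$, so the hypotheses of \lemref{pushcone} (and hence \propref{relcatcone}) hold, giving $\relcat(\alpha_{j+1})\leq\relcat(\alpha_j)+1$. Thus if one assumes $\relcat(\alpha_i)\leq i-1$, iterating from $j=i$ to $j=n-1$ yields $\relcat(\alpha_n)\leq n-1$, contradicting $\relcat(\alpha_n)=n$ (which is the $i=n$ case already established). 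This one-line propagation of a low-relcat hypothesis up the Ganea tower is precisely the role \propref{relcatcone} was designed to play, and it is considerably simpler and more robust than attempting to manufacture a section of the Ganea fibration from a relative domination of $\alpha_i$. To repair your proof, replace the contradiction argument in the last paragraph with this iteration of \propref{relcatcone}.
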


\begin{proof}
Let $\relcat(X,A)=n$.

Suppose $i\geq n$. The map $g_n\colon G_n(\iota_X)\to X$ has a homotopy section $\sigma$ such that $\sigma \circ \iota_X \simeq \alpha_n$, 
therefore $g_i\colon G_i(\iota_X) \to X$ has a homotopy section $\sigma_i = \gamma_{i-1} \circ \dots \circ \gamma_n \circ \sigma$ and $\sigma_i \circ \iota_X \simeq \alpha_i$. So we have $\relcat(X,A)\leq \relcat(G_i(\iota_X),A)$ from \cororef{catdominated}.
On the other hand, \cororef{relcatganea} gives us the reverse inequality, and so the equality is proved.

Now suppose $i<n$, and assume that $\relcat(G_i(\iota_X),A)\leq i-1$. From \propref{relcatcone}, we deduce that $\relcat(G_{i+1}(\iota_X),A)\leq i$, and so on, until we obtain $\relcat(G_n(\iota_X),A) \leq n-1$. This contradicts $\relcat(G_n(\iota_X),A) \geq n$ established above, so we have $\relcat(G_i(\iota_X),A)>i-1$.
On the other hand, clearly, we have $\relcat(G_i(\iota_X),A)\leq i$, and the equality is proved.
\end{proof}

\subsection{The Whitehead construction}

In this subsection, we assume that the category $\cM$ is pointed.

\begin{definition}\label{whitehead}
For any map $\iota_X\colon A \to X\in\cM$, the \emph{Whitehead
construction} or {\em fat wedges} of $\iota_X$ is the
following sequence of homotopy commutative diagrams ($i> 0$):
$$\xymatrix{
&X^i\times A
\ar[rd]_{\upsilon_i}
\ar[rrrd]^{{\rm id}_{X^i} \times \iota_X}&&&\\
T_{i-1}\times A
\ar[ru]^{t_{i-1}\times{\rm id}_A}
\ar[rd]_{{\rm id}_{T_{i-1}}\times\iota_X}
&&T_i\ar[rr]|-(.35){t_i}&&X^{i+1}\\
&T_{i-1}\times X\ar[ru]
\ar[rrru]_{t_{i-1}\times{\rm id}_X}&&&
}$$
where the outside square is a homotopy pullback, the inside square is a homotopy pushout,
and the map $t_i\colon T_i \to X^\ipu$ is the whisker map induced by this homotopy pushout.
The induction starts with $t_0 = \iota_X\colon A \to X$.
\end{definition}

We  denote $T_i$ by $T_i(\iota_X)$, or by $T_i(X,A)$. We also write $T_i(X) = T_i(X,\ast)$.

\smallskip
The following result is (almost) Theorem 8 in \cite{GarVan10} :

\begin{theorem}\label{ganeawhitehead}
Let $\iota_X\colon A\to X$ be a map of $\cM$. Let $i \geq 0$ and consider the
diagonal map $\Delta_\ipu\colon X \to X^\ipu$. Then we have homotopy pullbacks:
$$\xymatrix{A\ar[r]^(0.45){\alpha_i}\ar[d]_{\delta_i}&G_i(\iota_X)\ar[r]^(0.55){g_i}\ar[d]^{\varepsilon_i}&X\ar[d]^{\Delta_\ipu}\\
X^i \times A\ar[r]_{\upsilon_i}&T_i(\iota_X)\ar[r]_{t_i}&X^{i+1}
}$$ 
\end{theorem}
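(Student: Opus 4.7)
I would prove this by induction on $i$. The base case $i=0$ is immediate: $G_0 = A$ and $T_0 = A$ with $g_0 = t_0 = \iota_X$, while $X^0 \times A = A$ and $\Delta_1 = \id_X$, so both squares collapse to identity squares, which are trivially homotopy pullbacks.

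For the induction step, I would first establish that the right square at level $i+1$ is a homotopy pullback by invoking the Cube axiom. Take the Whitehead homotopy pushout defining $T_{i+1}$ as the bottom face of a cube, and form the top face by pulling each of its four corners back along $\Delta_{i+2}\colon X \to X^{i+2}$ (using the appropriate composite maps to $X^{i+2}$ coming from the Whitehead construction). The three non-$T_{i+1}$ corners are identified respectively with $A$ (from $X^{i+1} \times A$, immediate), with $G_i$ (from $T_i \times X$, using the induction hypothesis that $g_i$ is the pullback of $t_i$ along $\Delta_{i+1}$), and with $F_i$ (from $T_i \times A$, combining the induction hypothesis with the Ganea pullback description $F_i = G_i \times_X A$). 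The four vertical faces are homotopy pullbacks by construction, and the bottom is a homotopy pushout, so by the Cube axiom~\ref{cube} the top face is a homotopy pushout. Its fourth corner $P := T_{i+1} \times_{X^{i+2}} X$ is therefore canonically weakly equivalent to the Ganea pushout $G_{i+1}$ of $F_i \to A$ and $F_i \to G_i$; tracking universal maps, this equivalence realizes the pair $(\varepsilon_{i+1}, g_{i+1})\colon G_{i+1} \to T_{i+1} \times_{X^{i+2}} X$, so the right square at level $i+1$ is a homotopy pullback.

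For the left square at level $i+1$, I would apply pullback cancellation. The outer rectangle has top leg $\iota_X\colon A \to X$, right leg $\Delta_{i+2}$, bottom leg $\id_{X^{i+1}} \times \iota_X\colon X^{i+1} \times A \to X^{i+2}$ and left leg $\delta_{i+1}\colon A \to X^{i+1} \times A$ sending $a \mapsto (\iota_X(a),\dots,\iota_X(a),a)$; a direct computation shows this outer rectangle is a homotopy pullback. Since the right small square was just shown to be a homotopy pullback, the standard pullback cancellation lemma then yields that the left small square is a homotopy pullback.

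The main technical obstacle will be the identification of the pulled-back corner $(T_i \times A) \times_{X^{i+2}} X$ with $F_i$. The composite $T_i \times A \to T_{i+1} \to X^{i+2}$ equals $t_i \times \iota_X$, and its pullback along $\Delta_{i+2}$ is $\{(z,a) : t_i(z) = \Delta_{i+1}(\iota_X(a))\}$, which one must match, via the inductive equivalence $G_i \simeq T_i \times_{X^{i+1}} X$, with $F_i = G_i \times_X A$. Once this bookkeeping is settled and the four vertical faces of the cube are verified to be homotopy pullbacks, the Cube axiom does all the heavy lifting and both squares fall out.
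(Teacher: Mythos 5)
Your proof is correct and follows essentially the same route as the paper's. The paper packages the argument through its Join theorem (\ref{joint}): after using the Prism lemma to verify that the two squares
\[
\xymatrix@C=2pc{
A\ar[r]^{\iota_X}\ar[d]_{\delta_i}&X\ar[d]^{\Delta_{i+1}}&G_{i-1}\ar[l]_{g_{i-1}}\ar[d]^{\bar\epsilon_{i-1}}\\
X^i\times A\ar[r]&X^i\times X&T_{i-1}\times X\ar[l]
}
\]
are homotopy pullbacks, it applies the Join theorem to obtain both squares of the statement at level $i$ at once, since the top join is exactly $G_i$ and the bottom join is exactly $T_i$. The Join theorem's own proof is the cube you build: bottom face a pushout, vertical faces pullbacks, Cube axiom forcing the top face to be a pushout, with Lemma \ref{whiskercube} supplying the compatibility of the whisker maps. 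So the only genuine difference is presentational: you invoke the Cube axiom directly and then recover the left square by pullback cancellation, whereas the paper gets both squares simultaneously from the Join theorem. Your identification of the pulled-back corners ($A$, $G_i$, $F_i$) is the same bookkeeping the paper does when it checks the two squares above are pullbacks, and your appeal to ``tracking universal maps'' is exactly what Lemma \ref{whiskercube} makes precise. Both arguments are sound; the paper's version is merely more modular.
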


\begin{proof}
We proceed inductively. Consider the following homotopy commutative diagram, where $\delta_i$ is the whisker map of $\Delta_i\circ \iota_X$ and $\id_A$, and $\bar \epsilon_{i-1}$ is the whisker map of $\epsilon_{i-1}$ and $g_{i-1}$:
$$\xymatrix{
&A\ar[r]^{\iota_X}\ar[d]_{\delta_i}&X\ar[d]^{\Delta_\ipu}&G_{i-1}\ar[l]_{g_{i-1}}\ar[d]^{\bar\epsilon_{i-1}}\\
&X^i \times A\ar[r]\ar[ld]&X^i \times X\ar[ld]\ar[rd]&T_{i-1}\times X\ar[l]\ar[rd]&\\
A\ar[r]^(.6){\iota_X}&X&&X^i&\ar[l]_(.6){t_{i-1}}T_{i-1}
}$$
Applying the Prism lemma \ref{prisme} to the left and right parts of the diagram to obtain that the two upper squares  are homotopy pullbacks.
Now apply the Join theorem \ref{joint} to the two upper squares to get the inductive step.
\end{proof}

We denote $\epsilon_i$ by $\epsilon_i(\iota_X)$ or $\epsilon_i(X,A)$; we also write $\epsilon_i(X) = \epsilon_i(X,\ast)$. Notice that  $\delta_i(\id_X) \simeq \epsilon_i(\id_X) \simeq \Delta_{i+1}$.

We will also denote $\tau_i \simeq \epsilon_i \circ \alpha_i \simeq \upsilon_i \circ \delta_i$. Notice that $\alpha_i\colon A \to G_i$ is nothing but the whisker map of $\tau_i\colon A \to T_i$ and $\iota_X \colon A \to X$ induced by the right homotopy pullback of \thmref{ganeawhitehead}.

\thmref{ganeawhitehead} allows to give a `Whitehead version' of sectional and relative categories:

\begin{proposition}
Let $\iota_X\colon A \to X$ be a map of $\cM$. 

1) We have $\secat(\iota_X) \leq n$ if and only if there exists a map $\rho: X \to T_n(\iota_X)$ such that $t_n \circ \rho \simeq \Delta_{n+1}$.

2) We have $\relcat(\iota_X) \leq n$ if and only if there exists a map $\rho: X \to T_n(\iota_X)$ such that $t_n \circ \rho \simeq \Delta_{n+1}$ and $\rho \circ \iota_X \simeq \tau_n$.
\end{proposition}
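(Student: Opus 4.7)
The plan is to exploit the right-hand homotopy pullback of \thmref{ganeawhitehead}:
$$\xymatrix{G_n(\iota_X)\ar[r]^{g_n}\ar[d]_{\varepsilon_n}&X\ar[d]^{\Delta_{n+1}}\\ T_n(\iota_X)\ar[r]_{t_n}&X^{n+1}}$$
Homotopy sections $\sigma\colon X\to G_n$ of $g_n$ and maps $\rho\colon X\to T_n$ satisfying $t_n\circ\rho\simeq\Delta_{n+1}$ correspond to one another (up to homotopy) via $\rho=\varepsilon_n\circ\sigma$ in one direction and via the whisker map furnished by the pullback in the other. Both parts of the proposition are then read off from this correspondence, with part (2) merely tracking one additional compatibility with $\iota_X$.

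For part (1), the forward direction is a direct computation: given a homotopy section $\sigma$ of $g_n$, set $\rho=\varepsilon_n\circ\sigma$ and observe $t_n\circ\rho\simeq\Delta_{n+1}\circ g_n\circ\sigma\simeq\Delta_{n+1}$. Conversely, given $\rho$ with $t_n\circ\rho\simeq\Delta_{n+1}\simeq\Delta_{n+1}\circ\id_X$, the pair $(\rho,\id_X)$ agrees over $X^{n+1}$, so the homotopy pullback produces a whisker map $\sigma\colon X\to G_n$ with $g_n\circ\sigma\simeq\id_X$ and $\varepsilon_n\circ\sigma\simeq\rho$; the first identity exhibits $\sigma$ as a homotopy section of $g_n$, so $\secat(\iota_X)\leq n$.

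For part (2), the same maps $\sigma$ and $\rho$ serve, and only the extra compatibility with $\iota_X$ needs attention. If $\sigma\circ\iota_X\simeq\alpha_n$, then $\rho\circ\iota_X\simeq\varepsilon_n\circ\sigma\circ\iota_X\simeq\varepsilon_n\circ\alpha_n\simeq\tau_n$, by the definition of $\tau_n$. Conversely, starting from $\rho$ satisfying both conditions, construct $\sigma$ as in (1). Then $g_n\circ(\sigma\circ\iota_X)\simeq\iota_X$ and $\varepsilon_n\circ(\sigma\circ\iota_X)\simeq\rho\circ\iota_X\simeq\tau_n$, so $\sigma\circ\iota_X$ is a whisker map of the pair $(\iota_X,\tau_n)$ into the same right-hand pullback. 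But $\alpha_n$ is, by construction, also such a whisker map. Uniqueness of whisker maps up to homotopy yields $\sigma\circ\iota_X\simeq\alpha_n$.

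The only subtle step is this last appeal to uniqueness of the whisker map into a homotopy pullback up to homotopy. This is a standard feature of the model-categorical framework adopted in the paper and is tacitly used elsewhere (for instance in the proof of \propref{catretractstrong}), so it introduces no new obstacle; the rest of the argument is a routine unwinding of the Ganea--Whitehead comparison.
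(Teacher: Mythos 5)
Parts (1) and the forward direction of part (2) match the paper's proof exactly and are correct. The gap is in the converse direction of part (2), at the step you yourself flag as "the only subtle step": you argue that since both $\sigma\circ\iota_X$ and $\alpha_n$ induce the pair $(\iota_X,\tau_n)$ (up to homotopy) on the two legs of the right-hand pullback, uniqueness of whisker maps forces $\sigma\circ\iota_X\simeq\alpha_n$. This is \emph{not} a consequence of the universal property in this framework. As the appendix of the paper stresses, the whisker map into a homotopy pullback is determined by the outer square \emph{together with a chosen homotopy} filling it; two maps into the pullback that project to homotopic pairs need not be homotopic if the attached homotopies are not compatible. The cleanest counterexample is $P = \ast\times_X\ast = \Omega X$: every map $W\to\Omega X$ projects to the constant pair, yet such maps are certainly not all homotopic. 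In your situation, the homotopy $\rho\circ\iota_X\simeq\tau_n$ is merely assumed to exist; nothing guarantees that, together with the whisker homotopies for $\sigma$, it assembles into the specific homotopy for which $\alpha_n$ is the whisker map.

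The paper circumvents this exactly as one should. Using the left homotopy pullback from Theorem~\ref{ganeawhitehead}, let $P$ be the homotopy pullback of $\alpha_n$ and $\sigma$, so that by the Prism lemma the outer rectangle $P\to X\to T_n$, $P\to A\to X^n\times A\to T_n$ is also a homotopy pullback. Because the outer diagram $\upsilon_n\circ\delta_n\simeq\rho\circ\iota_X$ commutes up to homotopy, one obtains a whisker map $\hat\sigma\colon A\to P$ of $\delta_n$ and $\iota_X$. The resulting homotopy-commutative diagram gives $\delta_n\simeq\delta_n\circ\pi\circ\hat\sigma$; since $\delta_n$ has an evident retraction, it is a homotopy monomorphism, hence $\pi\circ\hat\sigma\simeq\id_A$, and finally $\sigma\circ\iota_X\simeq\alpha_n\circ\pi\circ\hat\sigma\simeq\alpha_n$. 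The point is that the conclusion is extracted from a \emph{single} coherently built diagram, not from comparing two whisker maps whose homotopies have not been matched. You should replace your uniqueness appeal with an argument of this kind.
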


\begin{proof}
If $\secat(\iota_X) \leq n$, we have a map $\sigma: X \to G_n$ such that $g_n \circ \sigma \simeq \id_X$. Set $\rho \simeq \epsilon_n \circ \sigma$. We have $t_n \circ \rho \simeq  t_n \circ \epsilon_n \circ \sigma \simeq \Delta_{n+1} \circ g_n \circ \sigma \simeq  \Delta_{n+1}$.
Moreover if $\relcat(\iota_X) \leq n$, we have also $\sigma \circ \iota_X \simeq \alpha_n$, thus $\rho \circ \iota_X \simeq \epsilon_n \circ \sigma \circ \iota_X \simeq \epsilon_n \circ \alpha_n \simeq \tau_n$.

For the reverse direction, assume we have a map $\rho: X \to T_n$ such that $t_n \circ \rho \simeq \Delta_{n+1}$.The right homotopy pullback of \thmref{ganeawhitehead} induces a whisker map $\sigma: X \to G_n$ of $\rho$ and $\id_X$, so $g_n \circ \sigma \simeq \id_X$ and $\epsilon_n \circ \sigma \simeq \rho$. 
If moreover we have $\rho \circ \iota_X \simeq \tau_n$, 
then extend the left homotopy pullback of \thmref{ganeawhitehead} to the following homotopy commutative diagram:
$$\xymatrix{
A\ar@{.>}[r]_{\hat\sigma}\ar[rdd]_{\delta_n}\ar@/^1pc/[rr]^{\iota_X}&P\ar[r]\ar[d]^{\pi}&X\ar[d]_{\sigma}\ar@/^1pc/[dd]^{\rho}\\
&A\ar[r]_{\alpha_n}\ar[d]^{\delta_n}&G_n\ar[d]_{\epsilon_n}\\
&X^n\times A\ar[r]_(.6){\upsilon_n}&T_n
}$$
where both squares (and the rectangle) are homotopy pullbacks and, as the outer diagram commutes up to homotopy, we have a whisker map $\hat\sigma\colon A\to P$ of $\delta_n$ and $\iota_X$; so we have $\delta_n\simeq \delta_n\circ \pi \circ \hat\sigma$. As $\delta_n$ has an obvious (homotopy) retraction, it is a homotopy monomorphism, thus $\id_A \simeq \pi \circ \hat\sigma$. Finally $\sigma\circ\iota_X \simeq \alpha_n \circ\pi \circ \hat\sigma \simeq \alpha_n$.
\end{proof}

\subsection{Change of base}
We return to sectional category and look at its behaviour when the base of the map changes. 
The precise statement, \propref{inegalitespaires}, is a consequence of the following result:

\begin{lemma}\label{changebase}
Suppose we are given any homotopy commutative diagram in $\cM$:
$$\xymatrix@C=3pc{
B\ar[r]^{\kappa_Y}\ar[d]_\zeta&Y\ar[d]^f\\
A\ar[r]_{\iota_X}&X
}$$
For any $i \geq 0$, there is a homotopy commutative diagram in $\cM$:
$$\xymatrix@C=4pc{
G_i(\kappa_Y)\ar[r]^{g_i(\kappa_Y)}\ar[d]_{\zeta_i}&Y\ar[d]^f\\
G_i(\iota_X)\ar[r]_{g_i(\iota_X)}&X
}$$
Moreover, if the first diagram is a homotopy pullback, the second one is a homotopy pullback as well.
\end{lemma}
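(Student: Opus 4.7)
The plan is to argue by induction on $i \geq 0$, constructing $\zeta_i$ one step at a time using the two universal properties (homotopy pullback and homotopy pushout) that define the Ganea construction of \defref{ganea}.

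For the base case $i = 0$, set $\zeta_0 = \zeta$; then $G_0(\kappa_Y) = B$, $G_0(\iota_X) = A$, $g_0(\kappa_Y) = \kappa_Y$, $g_0(\iota_X) = \iota_X$, and the induced square is exactly the given one, which is a homotopy pullback whenever that was assumed. For the inductive step, I suppose $\zeta_i$ is already given together with its compatibility $g_i(\iota_X) \circ \zeta_i \simeq f \circ g_i(\kappa_Y)$ and with the coherence $\zeta_i \circ \alpha_i(\kappa_Y) \simeq \alpha_i(\iota_X) \circ \zeta$. The universal property of $F_i(\iota_X)$ as the homotopy pullback of $\iota_X$ and $g_i(\iota_X)$, applied to $\zeta$ and $\zeta_i$, yields a whisker map $\phi_i \colon F_i(\kappa_Y) \to F_i(\iota_X)$ commuting up to homotopy with $\beta_i$ and $\eta_i$. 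The triple $(\zeta, \zeta_i, \phi_i)$ is then a map of the homotopy pushout datum defining $G_{i+1}$, so its universal property delivers $\zeta_{i+1} \colon G_{i+1}(\kappa_Y) \to G_{i+1}(\iota_X)$. Commutativity of the new square reduces, by uniqueness up to homotopy of whisker maps out of $G_{i+1}(\kappa_Y)$, to checking that $g_{i+1}(\iota_X) \circ \zeta_{i+1}$ and $f \circ g_{i+1}(\kappa_Y)$ agree on the two legs from $B$ and $G_i(\kappa_Y)$, which in turn reduces respectively to the given square and to the inductive hypothesis. The analogue $\zeta_{i+1} \circ \alpha_{i+1}(\kappa_Y) \simeq \alpha_{i+1}(\iota_X) \circ \zeta$, needed to carry the induction forward, is read off the construction.

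Suppose now that the given square is a homotopy pullback. Then by induction every square for $\zeta_j$ with $j \leq i$ is a homotopy pullback, and two successive pullback cancellations show that the square for $\phi_i$ is a homotopy pullback too. The defining pushout datum $A \leftarrow F_i(\iota_X) \to G_i(\iota_X)$ of $G_{i+1}(\iota_X)$ lives over $X$ via $\iota_X$, $g_i(\iota_X) \circ \beta_i$, and $g_i(\iota_X)$, and pulling these back along $f \colon Y \to X$ yields $B$, $F_i(\kappa_Y)$, and $G_i(\kappa_Y)$ respectively. The Cube axiom \ref{cube} guarantees that homotopy pullback along $f$ sends homotopy pushouts to homotopy pushouts; hence the homotopy pullback of $G_{i+1}(\iota_X) \to X$ along $f$ is the homotopy pushout of $B \leftarrow F_i(\kappa_Y) \to G_i(\kappa_Y)$, namely $G_{i+1}(\kappa_Y)$, with structure map $\zeta_{i+1}$. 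This is precisely the assertion that the second square is a homotopy pullback.

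The main obstacle is homotopy bookkeeping: the whisker maps $\phi_i$ and $\zeta_{i+1}$ are each only determined up to homotopy, so one must choose the homotopies consistently at every stage so that the auxiliary compatibilities with $\alpha_i$, $\beta_i$, $\gamma_i$ survive the passage from step $i$ to step $i+1$. Once this tracking of coherences is in place, the Cube axiom does all the substantive work and the pullback claim follows with no further effort.
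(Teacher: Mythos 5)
Your proposal is correct and is, in substance, exactly the paper's argument: the paper's one-line proof (``Use the Join theorem~\ref{joint} inductively'') is a condensed form of what you spell out, since $G_{i+1}(\iota_X)=A\bowtie_X G_i(\iota_X)$ and the Join theorem packages precisely the two whisker-map constructions (for the pullback $F_i$ and the pushout $G_{i+1}$) and the Cube axiom step that you perform by hand. Unpacking the Join theorem rather than citing it makes the coherence bookkeeping you flag at the end more visible, but the underlying route is the same.
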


\begin{proof}
Use the Join theorem \ref{joint} inductively.
\end{proof}

The three following propositions are straightforward consequences of the previous lemma.

\begin{proposition}\label{inegalitespaires}
Suppose we are given any homotopy commutative diagram in $\cM$:
$$\xymatrix@C=3pc{
B\ar[r]^{\kappa_Y}\ar[d]_\zeta&Y\ar[d]^f\\
A\ar[r]_{\iota_X}&X
}$$

1) If $f$ has a homotopy section, then $\secat(\iota_X)\leq\secat(\kappa_Y)$.

2) If the square is a homotopy pullback, then $\secat(\kappa_Y)\leq\secat(\iota_X)$.

3) If $f$ and $\zeta$ have homotopy inverses, then $\secat(\iota_X) = \secat(\kappa_Y)$.
\end{proposition}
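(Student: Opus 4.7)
The plan is to leverage \lemref{changebase} as the single structural input, reducing each of the three statements to a short chase on the induced homotopy commutative square of Ganea constructions
$$\xymatrix@C=3pc{
G_i(\kappa_Y)\ar[r]^{g_i(\kappa_Y)}\ar[d]_{\zeta_i}&Y\ar[d]^f\\
G_i(\iota_X)\ar[r]_{g_i(\iota_X)}&X
}$$
for a well-chosen $i$. In each part, fix $n$ to be the sectional category appearing on the right-hand side and use the corresponding Ganea square at level $n$.

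For part (1), assume $f$ admits a homotopy section $s\colon X\to Y$ and let $\sigma\colon Y\to G_n(\kappa_Y)$ be a homotopy section of $g_n(\kappa_Y)$ witnessing $\secat(\kappa_Y)\le n$. I would set $\tilde\sigma = \zeta_n\circ\sigma\circ s\colon X\to G_n(\iota_X)$. Then $g_n(\iota_X)\circ\tilde\sigma\simeq f\circ g_n(\kappa_Y)\circ\sigma\circ s\simeq f\circ s\simeq\id_X$, giving $\secat(\iota_X)\le n$.

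For part (2), the hypothesis that the original square is a homotopy pullback makes the induced square a homotopy pullback by \lemref{changebase}. Starting from a homotopy section $\sigma\colon X\to G_n(\iota_X)$ of $g_n(\iota_X)$, I would consider the pair of maps $\sigma\circ f\colon Y\to G_n(\iota_X)$ and $\id_Y\colon Y\to Y$, which satisfy $g_n(\iota_X)\circ(\sigma\circ f)\simeq f\simeq f\circ\id_Y$. The homotopy pullback then produces a whisker map $\tilde\sigma\colon Y\to G_n(\kappa_Y)$ with $g_n(\kappa_Y)\circ\tilde\sigma\simeq\id_Y$, proving $\secat(\kappa_Y)\le n$.

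For part (3), part (1) already yields $\secat(\iota_X)\le\secat(\kappa_Y)$ since a homotopy inverse is in particular a homotopy section. For the reverse inequality, I would produce a companion homotopy commutative square by reversing the vertical maps: choosing homotopy inverses $f^{-1}$ and $\zeta^{-1}$, the relation $f\circ\kappa_Y\simeq\iota_X\circ\zeta$ gives, after pre-composing with $\zeta^{-1}$ and post-composing with $f^{-1}$, the homotopy $\kappa_Y\circ\zeta^{-1}\simeq f^{-1}\circ\iota_X$. Applying part (1) to this new square (whose right vertical $f^{-1}$ admits $f$ as a homotopy section) delivers $\secat(\kappa_Y)\le\secat(\iota_X)$, and equality follows. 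The only step that needs a small verification is the homotopy commutativity of the reversed square, but this is a routine cancellation using that $f$ and $\zeta$ are homotopy equivalences, so no real obstacle arises.
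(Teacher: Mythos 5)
Your proposal is correct and follows exactly the route the paper intends: the paper states that Propositions \ref{inegalitespaires}--\ref{svarc} are ``straightforward consequences'' of \lemref{changebase} without spelling anything out, and your argument supplies precisely those details by chasing sections and whisker maps through the induced square of Ganea constructions at level $n$. The one place you take a slightly longer path is part (3): rather than building the reversed square by hand and re-applying part (1), you could note that a homotopy commutative square whose two vertical maps are homotopy equivalences is automatically a homotopy pullback, so parts (1) and (2) already give both inequalities; but your version is equally valid and just as short once you observe that $f$ itself serves as a homotopy section of $f^{-1}$.
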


\begin{proposition}\label{secatbut}
If a map $\kappa_X\colon B \to X$ factors through $\iota_X\colon A \to X$ up to homotopy, then $\secat(\iota_X) \leq \secat(\kappa_X)$.
\end{proposition}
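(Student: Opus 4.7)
The statement is an immediate specialization of \propref{inegalitespaires}(1), so the plan is simply to set up the right square and invoke that result.

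First I would unpack the hypothesis: since $\kappa_X$ factors up to homotopy through $\iota_X$, there is a map $\zeta\colon B\to A$ such that $\iota_X\circ \zeta \simeq \kappa_X$. Rewriting this relation as $\id_X \circ \kappa_X \simeq \iota_X\circ\zeta$ gives a homotopy commutative square
$$\xymatrix@C=3pc{
B\ar[r]^{\kappa_X}\ar[d]_\zeta&X\ar[d]^{\id_X}\\
A\ar[r]_{\iota_X}&X
}$$
fitting the format required by \lemref{changebase} and \propref{inegalitespaires}, with $Y = X$ and $f = \id_X$.

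Next, I would observe that $\id_X$ trivially has a homotopy section (itself), so hypothesis (1) of \propref{inegalitespaires} is satisfied. Applying that statement to the square above yields $\secat(\iota_X) \leq \secat(\kappa_X)$, which is exactly what is to be proved.

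There is no real obstacle: the content is entirely in \lemref{changebase} and \propref{inegalitespaires}(1), and the present proposition is a one-line corollary obtained by taking $f$ to be the identity of the common target $X$.
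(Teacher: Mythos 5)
Your proof is correct and matches the paper's intent: the paper states that \propref{secatbut} is a straightforward consequence of \lemref{changebase}, and deriving it via \propref{inegalitespaires}(1) by taking $Y = X$, $f = \id_X$, and the square determined by the factorization $\iota_X \circ \zeta \simeq \kappa_X$ is exactly that deduction.
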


In particular, if $\cM$ is pointed, for any map $\iota_X \colon A \to X$, $\secat(X,A) \leq \cat(X)$.

It is of course {\em not} true that $\relcat(X,A) \leq \cat(X)$; consider $A
\to \ast$, or see \examref{hopf} for instance.

\begin{proposition}\label{svarc}
Let $\iota_X\colon A\to X$ and $f\colon Y\to X$ be maps of $\cM$ and consider the homotopy pullback $F = Y \times_X A$.
We have $\secat(Y,F)\leq n$ if and only if $f$ factors through  $g_n\colon G_n(\iota_X)\to X$ up to homotopy.
\end{proposition}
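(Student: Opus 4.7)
The plan is to derive both implications from \lemref{changebase} applied to the defining homotopy pullback square of $F$. The projection $F \to Y$ fits into a homotopy pullback
$$\xymatrix@C=3pc{
F\ar[r]\ar[d]_{\iota_Y}&A\ar[d]^{\iota_X}\\
Y\ar[r]_f&X
}$$
where I write $\iota_Y\colon F\to Y$ for the projection. Applying \lemref{changebase} inductively, I obtain a homotopy pullback
$$\xymatrix@C=3pc{
G_n(\iota_Y)\ar[r]^{\zeta_n}\ar[d]_{g_n(\iota_Y)}&G_n(\iota_X)\ar[d]^{g_n(\iota_X)}\\
Y\ar[r]_f&X.
}$$
This single diagram will drive both directions.

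For the forward direction, assume $\secat(Y,F) = \secat(\iota_Y) \leq n$, so there exists a homotopy section $\sigma\colon Y\to G_n(\iota_Y)$ of $g_n(\iota_Y)$. I then set $h \simeq \zeta_n\circ\sigma\colon Y\to G_n(\iota_X)$; commutativity of the square above gives
$$g_n(\iota_X)\circ h \simeq g_n(\iota_X)\circ\zeta_n\circ\sigma \simeq f\circ g_n(\iota_Y)\circ\sigma \simeq f,$$
which is exactly the desired factorization of $f$ through $g_n(\iota_X)$.

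For the reverse direction, suppose we have a map $h\colon Y\to G_n(\iota_X)$ with $g_n(\iota_X)\circ h \simeq f$. Since $f\circ\id_Y \simeq g_n(\iota_X)\circ h$, the universal property of the homotopy pullback above produces a whisker map $\sigma\colon Y\to G_n(\iota_Y)$ satisfying $g_n(\iota_Y)\circ\sigma \simeq \id_Y$ (and $\zeta_n\circ\sigma\simeq h$). Thus $\sigma$ is a homotopy section of $g_n(\iota_Y)$, giving $\secat(Y,F) \leq n$.

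The only nontrivial point is the invocation of \lemref{changebase}, which gives the compatibility of the Ganea construction with pullback along $f$; everything else is a formal manipulation with the universal property of homotopy pullbacks. The main (minor) subtlety is to make sure the inductive application of \lemref{changebase} does not merely produce a commutative square but a homotopy pullback square, which is exactly what \lemref{changebase} guarantees when its input square is a pullback — as is the case here.
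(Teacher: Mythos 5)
Your proof is correct and follows exactly the route the paper intends: apply \lemref{changebase} to the defining homotopy pullback of $F$ to get a homotopy pullback relating $g_n(\iota_Y)$ and $g_n(\iota_X)$, then read off both implications (the forward one by composing, the reverse one via the whisker map of the pullback). The paper simply declares this proposition a "straightforward consequence" of \lemref{changebase} without writing it out, so you have supplied precisely the details it leaves implicit.
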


This last proposition is an extension of a result of A.S.~Schwarz \cite{Sva66}.

\begin{definition}Assume $\cM$ is pointed.  Let $f\colon Y \to X$ be a map of $\cM$. The \emph{category} of $f$ is the least integer $n$ such that
$f$ factors through $g_n\colon G_n(X) \to X$ up to homotopy.\end{definition}

The category of $f$ is denoted by $\cat(f)$. By \propref{svarc}, $\cat(f) = \secat(Y,F)$ where $F\to Y$ is the homotopy fibre of $f$.

\subsection{Sectional and relative categories of maps in the Ganea construction}
Consider the Ganea construction of $\iota_X\colon A\to X$. 
The following results determine the sectional category and the relative category of the maps $\alpha_i\colon A\to G_i$ and $\beta_i\colon F_i \to G_i$.

\begin{theorem}\label{secatganea}  Let $\iota_X\colon A \to X$ be any map of $\cM$.
 Consider the map $\beta_i\colon F_i(\iota_X)\to G_i(\iota_X)$ of the Ganea construction. We have:
$$\relcat(\beta_i) = \min\{i+1,\relcat(\iota_X)\} \text{ and } \secat(\beta_i) = \min\{i,\secat(\iota_X)\}.$$
\end{theorem}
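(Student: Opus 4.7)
My plan is to reduce the Ganea construction of $\beta_i$ to that of $\iota_X$ by change of base, then extract all four upper bounds explicitly and deduce the two lower bounds from earlier results. Since the defining square of $F_i$ is a homotopy pullback, \lemref{changebase} applied inductively yields, for each $j\geq 0$, a homotopy pullback
$$\xymatrix@R=1.2pc@C=1.5pc{G_j(\beta_i)\ar[r]^{\zeta_j}\ar[d]_{g_j(\beta_i)}& G_j(\iota_X)\ar[d]^{g_j(\iota_X)}\\ G_i\ar[r]_{g_i}& X}$$
identifying $G_j(\beta_i)$ with $G_i\times_X G_j(\iota_X)$ and $g_j(\beta_i)$ with the first projection. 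By naturality of the Ganea construction, $\alpha_j(\beta_i)\colon F_i\to G_j(\beta_i)$ then corresponds to the whisker pair $(\beta_i,\,\alpha_j(\iota_X)\circ \eta_i)$ under this identification.

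Four upper bounds fall out. First, the diagonal $\Delta\colon G_i\to G_i\times_X G_i$ is a homotopy section of $g_i(\beta_i)$, giving $\secat(\beta_i)\leq i$. Second, the pair $(\id_{G_i},\gamma_i)\colon G_i\to G_i\times_X G_{i+1}(\iota_X)$ is a well-defined homotopy section of $g_{i+1}(\beta_i)$ because $g_{i+1}\circ \gamma_i\simeq g_i$; precomposing with $\beta_i$ yields $(\beta_i,\,\gamma_i\circ \beta_i)$, which is homotopic to $\alpha_{i+1}(\beta_i)$ via the Ganea pushout relation $\gamma_i\circ \beta_i\simeq \alpha_{i+1}\circ \eta_i$, so $\relcat(\beta_i)\leq i+1$. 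Third and fourth, $\secat(\beta_i)\leq \secat(\iota_X)$ and $\relcat(\beta_i)\leq \relcat(\iota_X)$ follow directly from \propref{inegalitespaires}(2) and \cororef{relcatganea} respectively.

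For the lower bounds, the Ganea pushout together with \lemref{cylsomme} gives $\relcat(\alpha_{i+1})\leq \relcat(\beta_i)$, and \propref{cornea} identifies the left side with $\min\{i+1,\relcat(\iota_X)\}$. The sectional-category lower bound splits into two cases. If $\secat(\iota_X)>i$ then $\relcat(\iota_X)\geq i+1$, so $\relcat(\beta_i)\geq i+1$ and \thmref{takens} gives $\secat(\beta_i)\geq \relcat(\beta_i)-1\geq i$. If $\secat(\iota_X)\leq i$ and $\secat(\beta_i)\leq k$, the pullback identification converts a homotopy section of $g_k(\beta_i)$ into a map $\tau\colon G_i\to G_k(\iota_X)$ satisfying $g_k\circ \tau\simeq g_i$, and composing with a homotopy section of $g_i$ (available because $\secat(\iota_X)\leq i$) produces a homotopy section of $g_k$, whence $\secat(\iota_X)\leq k$. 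The main obstacle is checking the naturality statement for $\alpha_j(\beta_i)$ under base change, which is what licenses the comparison $(\beta_i,\gamma_i\circ \beta_i)\simeq \alpha_{i+1}(\beta_i)$ in the bound $\relcat(\beta_i)\leq i+1$.
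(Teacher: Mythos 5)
Your proof is correct, but it takes a genuinely different route from the paper's. You begin by applying \lemref{changebase} to the defining pullback of $F_i$ to identify $G_j(\beta_i)$ with the homotopy pullback $G_i\times_X G_j(\iota_X)$ and $g_j(\beta_i)$ with the projection, and then exhibit explicit sections of $g_i(\beta_i)$ (the diagonal) and $g_{i+1}(\beta_i)$ (the pair $(\id_{G_i},\gamma_i)$) to extract the upper bounds $i$ and $i+1$. The paper never computes $G_j(\beta_i)$ at all: it gets $\secat(\beta_i)\leq i$ cheaply from the factorization $\alpha_i\simeq\beta_i\circ\theta_i$ via \propref{secatbut} together with $\relcat(\alpha_i)=i$ from \propref{cornea}, squeezes $\relcat(\beta_i)$ between $\relcat(\alpha_{i+1})$ and $\relcat(\iota_X)$ via \cororef{relcatganea} and \propref{cornea}, uses \propref{inegalitespaires} in the case $i\geq\secat(\iota_X)$, and closes the case $i<\secat(\iota_X)$ with \thmref{takens}. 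Your change-of-base viewpoint is conceptually illuminating — it says that passing from $\iota_X$ to $\beta_i$ literally pulls the entire Ganea tower of $\iota_X$ back along $g_i$ — whereas the paper's argument is shorter because it reuses the already-established inequalities instead of re-deriving several of them (for instance, the case analysis giving $\secat(\iota_X)\leq\secat(\beta_i)$ when $\secat(\iota_X)\leq i$ is exactly \propref{inegalitespaires}(1) spelled out).

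The one genuine gap is the one you flag yourself: the assertion that $\alpha_{i+1}(\beta_i)$ corresponds, under the pullback identification, to the whisker pair $(\beta_i,\alpha_{i+1}(\iota_X)\circ\eta_i)$. This is not part of \lemref{changebase} as stated; it requires tracing the whisker maps $\zeta_j$ through the inductive Join-theorem construction and checking that they are compatible with the $\alpha_j$'s. You can prove this, but you do not need to: the upper bound $\relcat(\beta_i)\leq i+1$ follows for free from your $\secat(\beta_i)\leq i$ and \thmref{takens}, which you already invoke in the lower-bound case split. Discard the explicit section $(\id_{G_i},\gamma_i)$ and the naturality claim, replace them by this one-line appeal to \thmref{takens}, and the argument closes cleanly.
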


\begin{proof}
Let $i \geq \secat(\iota_X)$.
By \thmref{takens}, $i+1 \geq \relcat(\iota_X)$. Then by \propref{cornea} and \cororef{relcatganea}, we have $\relcat(\iota_X) = \relcat(\alpha_{i+1}) \leq \relcat(\beta_i) \leq \relcat(\iota_X)$. On the other hand, since $F_i = G_i \times_X A$ and $g_i$ has a homotopy section, \propref{inegalitespaires} yields $\secat(\beta_i) = \secat(\iota_X)$.

Let $i < \secat(\iota_X) \leq \relcat(\iota_X)$.
Then by \propref{cornea} and \cororef{relcatganea}, we have $i+1 = \relcat(\alpha_{i+1}) \leq \relcat(\beta_i)$. On the other hand, since $\alpha_i \simeq \beta_i \circ \theta_i$, by \propref{secatbut} and \propref{cornea}, we have $\secat(\beta_i) \leq \secat(\alpha_i) \leq \relcat(\alpha_i) = i$. By \thmref{takens},  $\secat(\beta_i)$ and $\relcat(\beta_i)$ can only differ by 1, so $\secat(\beta_i) =i$ and $\relcat(\beta_i) = i+1$.
\end{proof}

As a particular case, we have:

\begin{corollary}\label{secatfibreganea}Assume  $\cM$ is pointed. Consider the homotopy fibre $\beta_i\colon F_i(X) \to G_i(X)$ of the map $g_i\colon G_i(X) \to X$. We have
$$\relcat(\beta_i) = \min\{i+1,\cat(X)\} \text{ and } \cat(g_i) = \secat(\beta_i) = \min\{i,\cat(X)\}.$$
\end{corollary}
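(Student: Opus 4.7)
The plan is to deduce this corollary directly from \thmref{secatganea} by specializing to the map $\iota_X = (\ast \to X)$, since the pointed Ganea construction from Definition~\ref{ganea} applied to $\ast \to X$ recovers the usual Ganea fibration tower, and its fibres are precisely the homotopy fibres of the $g_i$.

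First I would identify the Ganea-construction data in this special case. Since $\cM$ is pointed with $\ast$ as zero object, taking $\iota_X \colon \ast \to X$ makes $F_i = G_i \times_X \ast$ the homotopy fibre of $g_i \colon G_i(X) \to X$, so the map $\beta_i \colon F_i \to G_i$ in \thmref{secatganea} is exactly the fibre inclusion appearing in the statement. Moreover, by the definitions just after \defref{LSganea} and \defref{strongrelcat}, we have $\cat(X) = \secat(X, \ast) = \relcat(X, \ast)$, so that $\secat(\iota_X) = \relcat(\iota_X) = \cat(X)$. Substituting these values into the conclusion of \thmref{secatganea} immediately yields
$$\relcat(\beta_i) = \min\{i+1, \cat(X)\} \quad \text{and} \quad \secat(\beta_i) = \min\{i, \cat(X)\}.$$

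It remains to verify the middle equality $\cat(g_i) = \secat(\beta_i)$. This is exactly the content of the definition of $\cat(f)$ given right after \propref{svarc}: for any map $f \colon Y \to X$, one has $\cat(f) = \secat(Y, F)$ where $F \to Y$ is the homotopy fibre of $f$. Applied to $f = g_i$, whose homotopy fibre is $\beta_i \colon F_i \to G_i$, this gives $\cat(g_i) = \secat(G_i, F_i) = \secat(\beta_i)$.

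There is no real obstacle here since everything is just specialisation and bookkeeping; the only thing to keep straight is the naming convention, namely that in the Ganea tower for $\ast \to X$ the map denoted $\beta_i$ in \thmref{secatganea} really is the homotopy fibre inclusion $F_i(X) \to G_i(X)$ of the stated corollary, and that in the pointed setting $\secat$ and $\relcat$ of $\ast \to X$ collapse to the common value $\cat(X)$.
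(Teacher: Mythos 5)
Your proof is correct and is exactly the specialization the paper intends: the corollary is stated ``as a particular case'' of Theorem~\ref{secatganea} with no separate proof, and your argument supplies precisely the bookkeeping the authors left implicit — substituting $\iota_X = (\ast \to X)$ to get $\secat(\iota_X) = \relcat(\iota_X) = \cat(X)$, identifying $\beta_i$ with the homotopy fibre inclusion, and invoking the remark after Proposition~\ref{svarc} that $\cat(f) = \secat(Y,F)$ to obtain $\cat(g_i) = \secat(\beta_i)$.
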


We are now ready to enhance \propref{cornea}.

\begin{corollary}   \label{lastbutnotleast}
Let $\iota_X\colon A\to X$ be any map of $\cM$.
Consider the map $\alpha_i\colon A\to G_i(\iota_X)$ of the Ganea construction.
We have:
$$\min\{ i , \secat(\iota_X) \} \leq \secat(\alpha_i) \leq  \relcat(\alpha_i)= \min\{ i , \relcat(\iota_X) \}$$
\end{corollary}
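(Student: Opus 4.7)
The rightmost equality in the corollary is exactly \propref{cornea}, which has already been proved; the middle inequality is the general observation $\secat \leq \relcat$. So the only new content is the leftmost inequality
$$\min\{i,\secat(\iota_X)\} \leq \secat(\alpha_i).$$

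My plan is to deduce this immediately from the already-proved \thmref{secatganea} together with \propref{secatbut}. Recall from the extended Ganea diagram (the one displayed just after \defref{ganea}) that there is a factorization $\alpha_i \simeq \beta_i \circ \theta_i$, where $\theta_i\colon A \to F_i$ is the whisker map into the homotopy pullback. Thus $\alpha_i\colon A \to G_i(\iota_X)$ factors up to homotopy through $\beta_i\colon F_i(\iota_X) \to G_i(\iota_X)$, which has the same target $G_i(\iota_X)$. Applying \propref{secatbut} (with the role of $X$ played by $G_i(\iota_X)$, the role of $\iota_X$ played by $\beta_i$, and the role of $\kappa_X$ played by $\alpha_i$) yields
$$\secat(\beta_i) \leq \secat(\alpha_i).$$

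Now \thmref{secatganea} identifies the left-hand side: $\secat(\beta_i) = \min\{i,\secat(\iota_X)\}$. Combining these two facts gives exactly the missing inequality, and concatenating with the obvious $\secat(\alpha_i) \leq \relcat(\alpha_i)$ and with \propref{cornea} yields the full chain of (in)equalities claimed. There is no real obstacle here: the work was done in \thmref{secatganea} and \propref{cornea}, and this corollary is just the observation that the factorization $\alpha_i \simeq \beta_i \circ \theta_i$ lets one transfer the lower bound on $\secat(\beta_i)$ to a lower bound on $\secat(\alpha_i)$.
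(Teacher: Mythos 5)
Your proposal is correct and follows essentially the same route as the paper: the paper's proof cites \propref{cornea} for the equality and then derives $\secat(\alpha_i) \geq \secat(\beta_i) = \min\{i,\secat(\iota_X)\}$ from \propref{secatbut} and \thmref{secatganea}, exactly as you do via the factorization $\alpha_i \simeq \beta_i \circ \theta_i$.
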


\begin{proof}
The last equality is \propref{cornea}.
On the other hand, by \propref{secatbut} and  \thmref{secatganea}, we have $\secat(\alpha_i) \geq \secat(\beta_i) = \min\{ i , \secat(\iota_X)\}$.
\end{proof}

One might expect that the first inequality would be an equality, but it is not.
When $i > \secat(\iota_X)$ the inequality can be strict. 
Indeed consider $\iota_\ast\colon A\to \ast$. We have $\alpha_1\colon A \to A\bowtie A$ (join of $A$ with itself), which is a null map, i.e. it factors through the zero object,
so it can not have a homotopy section (unless $A \simeq \ast$).
Thus we have $\min\{ 1 , \secat(\iota_\ast) \} = \secat (\iota_\ast) = 0$, while $\secat(\alpha_1) = 1$.


\section[Complexity]{Complexity}\label{sectioncomplexity}

In this section, $\cM$ is pointed with $\ast$ as zero object.

\subsection{Complexity}

\begin{definition}\label{compl}Let $X$ be any object of $\cM$.

We define the  \emph{complexity} of $X$ to be the sectional category of the diagonal map $\Delta\colon X \to X \times X$.

Analogously, we define the  \emph{relative} (\resp\emph{pushout}, and \emph {strong}) \emph{complexity} of $X$ to be the relative (\resp pushout, and strong relative) category of the diagonal.
\end{definition}

We use the following notations: $\compl(X) = \secat(\Delta)$, $\relcompl(X) = \relcat(\Delta)$, $\Pushcompl(X) = \Pushcat(\Delta)$, $\Compl(X) = \Relcat(\Delta)$.

\smallskip
In $\Top$, the complexity is called \emph{topological complexity} by M.~Farber\cite{Far03} (up to a shift by 1): ${\rm TC}(X) = \compl(X)+1$.

\smallskip
Consider the diagonal $\Delta_{i+1} \colon X\to X^{i+1}$ and the maps  $\delta_i(\iota_X)\colon A \to X^i \times A$ and $\epsilon_i(\iota_X)\colon G_i(\iota_X) \to T_i(\iota_X)$ built for any map  $\iota_X\colon A \to X$ in \thmref{ganeawhitehead}. 

\begin{proposition}\label{catsecat}
For any object $X$ and any map $\iota_X\colon A \to X$  of $\cM$, 
$$\cat(X^i) \leq \secat(\delta_i(\iota_X))  \leq \secat(\epsilon_i(\iota_X)) \leq \secat(\Delta_\ipu) \leq \cat(X^\ipu).$$
\end{proposition}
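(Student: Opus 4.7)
The plan is to establish the four inequalities in reverse order of difficulty.

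The rightmost, $\secat(\Delta_\ipu)\le\cat(X^\ipu)$, is an instance of the general observation $\secat(X,A)\le\cat(X)$ (remarked after \propref{secatbut}) specialised to $\iota_X=\Delta_\ipu\colon X\to X^\ipu$. The two middle inequalities $\secat(\delta_i(\iota_X))\le\secat(\epsilon_i(\iota_X))\le\secat(\Delta_\ipu)$ come directly from \thmref{ganeawhitehead}: the two inner squares there are homotopy pullbacks, and transposing each so that the pairs $(\delta_i,\epsilon_i)$ and $(\epsilon_i,\Delta_\ipu)$ become the horizontal maps, \propref{inegalitespaires}(2) delivers both bounds in turn.

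For the first inequality $\cat(X^i)\le\secat(\delta_i(\iota_X))$, my plan is to exhibit the commutative square
$$\xymatrix{
\ast\ar[r]\ar[d] & A\ar[d]^{\delta_i}\\
X^i\ar[r]_{s} & X^i\times A
}$$
(where $s$ is the basepoint slice $x\mapsto(x,\ast)$) as a homotopy pullback; transposing and applying \propref{inegalitespaires}(2) then gives $\cat(X^i)=\secat(\ast\to X^i)\le\secat(\delta_i(\iota_X))$.

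The verification that this square really is a homotopy pullback is the step requiring care, and my expected main obstacle. Both $\delta_i$ and $s$ are split monomorphisms, with retractions $\pi_A$ and $\pi_{X^i}$ respectively, and their images meet only at the basepoint since $\pi_A\circ s$ is constant. Concretely in $\Top$, a point of $H=A\times^h_{X^i\times A}X^i$ is a triple $(a,x,\gamma)$ with $\gamma\colon\delta_i(a)\to s(x)$ a path in $X^i\times A$; decomposing $\gamma$ into its $X^i$- and $A$-components, $H$ fibres over the (contractible) based path space $P_\ast A$ with contractible fibres of paths in $X^i$ starting at a fixed point, so $H\simeq\ast$. In the abstract model-categorical setting of the paper, the same conclusion should follow formally from the two retractions together with the Cube axiom.
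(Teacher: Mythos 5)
Your proof follows the paper's for all four inequalities: the last via the remark after \propref{secatbut}, the middle two via the homotopy pullbacks of \thmref{ganeawhitehead} together with \propref{inegalitespaires}(2), and the first via the same square (the paper writes $\inc_1$ for your basepoint slice $s$) and \propref{inegalitespaires}(2). The paper merely assembles the four pullbacks into a single strip, so the content is identical.

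The one place you depart from the paper is the verification that the square with vertical maps $\ast\to X^i$ and $\delta_i\colon A\to X^i\times A$ is a homotopy pullback. Your path-space computation in $\Top$ is correct, but your closing suggestion that the model-categorical argument ``should follow formally from the two retractions together with the Cube axiom'' names the wrong tool: the Cube axiom (\axiomref{cube}) deduces a homotopy pushout from homotopy pullbacks and plays no role here. The paper appeals to the Prism lemma~\ref{prisme}, and the intended abstract argument is precisely the pasting your two retractions make possible. Stack your square on top of the square whose rows are $\inc_1\colon X^i\to X^i\times A$ and $\ast\to A$, and whose columns are the trivial map $X^i\to\ast$ and the projection $\pro_2\colon X^i\times A\to A$; this lower square is the fibre square of the product and hence a homotopy pullback. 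The composite rectangle has columns $\ast\to\ast$ and $\pro_2\circ\delta_i\simeq\id_A$, so it is trivially a homotopy pullback. Prism lemma~\ref{prisme}(2) then forces the upper square, the one you need, to be a homotopy pullback. With that correction your proof matches the paper's.
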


\begin{proof}
The last inequality is just \propref{secatbut}.

On the other hand, we have the following homotopy pullbacks:
$$\xymatrix{%
\ast\ar[r]\ar[d]&A\ar[r]\ar[d]^{\delta_i}&G_i(\iota_X)\ar[r]^{g_i}\ar[d]^{\varepsilon_i}&X\ar[d]^{\Delta_\ipu}\\
X^i \ar[r]_(.45){\inc_1}&X^i \times A\ar[r]&T_i(\iota_X)\ar[r]_{t_i}&X^\ipu
}$$
The two right squares are given by \thmref{ganeawhitehead}, and the left square is easily obtained by the Prism lemma \ref{prisme}.
We deduce the first three inequalities from \propref{inegalitespaires}.
\end{proof}

This  proposition suggests the following definition of complexity of a map:

\begin{definition}For any map $\iota_X\colon A \to X$, we  define the {\em complexity} of $\iota_X$ as the sectional category of $\delta_1(\iota_X)\colon A \to X \times A$.\end{definition}
Recall that $\delta_1(\iota_X)$ is the whisker map of $\iota_X$ and $\id_A$. 
We write $\compl(\iota_X) = \compl(X,A) = \secat(\delta_1(\iota_X))$.

In particular $\compl(\id_X) = \compl(X)$, since $\delta_1(\id_X) \simeq \Delta$.
On the other hand $\compl(X,*) = \cat(X)$; indeed consider $\epsilon_1(X)\colon \Sigma\Omega X \to X\vee X$, 
and we see that $\cat(X) \leq \secat(\delta_1(X)) \leq \secat(\epsilon_1(X)) \leq \cat(X\vee X) = \cat(X)$. 

\propref{catsecat} gives:

\begin{corollary}\label{ineqcompl}For any object $X$ and any map $\iota_X\colon A\to X$ of $\cM$,
$$\cat(X)\leq \compl(\iota_X) \leq \compl(X) \leq \cat(X\times X).$$
\end{corollary}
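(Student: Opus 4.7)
The plan is to simply specialize \propref{catsecat} to the case $i=1$ and read off the desired inequalities, dropping the middle term that is not part of the corollary's statement.

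Concretely, \propref{catsecat} applied with $i=1$ yields the chain
$$\cat(X) \leq \secat(\delta_1(\iota_X)) \leq \secat(\epsilon_1(\iota_X)) \leq \secat(\Delta_2) \leq \cat(X\times X).$$
By the definitions introduced just before the corollary, $\secat(\delta_1(\iota_X))$ is exactly $\compl(\iota_X)$ and $\secat(\Delta_2) = \secat(\Delta)$ is exactly $\compl(X)$. Substituting these identifications, and discarding the intermediate term $\secat(\epsilon_1(\iota_X))$, yields precisely
$$\cat(X)\leq \compl(\iota_X) \leq \compl(X) \leq \cat(X\times X),$$
which is the statement of the corollary.

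There is no real obstacle here: the entire content has been absorbed into \propref{catsecat}, whose proof relies on \thmref{ganeawhitehead} together with the left homotopy pullback produced by the Prism lemma, and on the behaviour of sectional category under change of base (\propref{inegalitespaires}) and under factorisation (\propref{secatbut}). The only thing to verify is the correct reading of the specialisation $i=1$, which matches the definitions $\compl(\iota_X) = \secat(\delta_1(\iota_X))$ and $\compl(X) = \secat(\Delta)$ word for word.
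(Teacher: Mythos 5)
Your proposal is correct and is essentially identical to the paper's own proof: the paper simply says that \propref{catsecat} gives the corollary, which is precisely the $i=1$ specialisation with the identifications $\compl(\iota_X) = \secat(\delta_1(\iota_X))$ and $\compl(X) = \secat(\Delta_2)$, discarding the intermediate term $\secat(\epsilon_1(\iota_X))$. Nothing further to add.
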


\begin{example}M.~Farber \cite{Far03} has shown that the complexity of a sphere is 1 if the dimension is odd and 2 if the dimension is even.

Consider the Hopf fibration $S^7\to S^4$ and factor by the action of $S^1$ on $S^7$ to get $\iota\colon \CP^3\to S^4$. Let $u$ be a generator of the rational cohomology $H^\ast(S^4)$ and $v$ be a generator of $H^\ast(\CP^3)$. Define $a = 1 \otimes v^2 - u \otimes 1$ and $b = u \otimes v^2$ in $H^\ast(S^4) \otimes H^\ast(\CP^3)$. The map $\delta_1\colon \CP^3 \to S^4 \times \CP^3$ induces $\delta_1^\ast \colon H^\ast(S^4) \otimes H^\ast(\CP^3) \to  H^\ast(\CP^3)$. We have $\delta_1^\ast(a) = 1.v^2 - v^2.1 = 0$ and $a^2 = -2 b \not= 0$. By A.S.~Schwarz \cite{Sva66}, Theorem 4, this means that $2 \leq \secat(\delta_1) = \compl(\iota)$. But by \cororef{ineqcompl}, we also know that  $\compl(\iota) \leq \cat(S^4 \times S^4) = 2$. So $\compl(\iota) = 2$.

In contrast, from the fact that $\compl(S^{2n+1}) = 1$ and \cororef{ineqcompl}, we see that $\compl(\iota') = 1$ for {\em any} map $\iota' \colon A \to S^{2n+1}$.
\end{example}

On the other hand, by \thmref{takens}, we know that all the variants of complexity can only differ by 1:

\begin{proposition}\label{takenscompl}For any object $X$ of $\cM$, 
$$\compl(X) \leq \relcompl(X) \leq \Pushcompl(X) \leq \Compl(X) \leq \compl(X)+1.$$
\end{proposition}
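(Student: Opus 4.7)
The plan is to observe that this proposition is nothing more than \thmref{takens} applied to the specific map $\Delta\colon X \to X\times X$. By \defref{compl} and the notations introduced just after it, the four variants of complexity are by definition the four variants of (sectional/relative/pushout/strong relative) category evaluated on the diagonal:
$$\compl(X) = \secat(\Delta), \quad \relcompl(X) = \relcat(\Delta), \quad \Pushcompl(X) = \Pushcat(\Delta), \quad \Compl(X) = \Relcat(\Delta).$$

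So the proof is a one-line substitution: take $\iota_X = \Delta \colon X \to X\times X$ in the chain
$$\secat(\iota_X)\leq \relcat(\iota_X) \leq \Pushcat(\iota_X) \leq \Relcat(\iota_X)\leq \secat(\iota_X)+1$$
from \thmref{takens} and rewrite each term using the notations above. There is no real obstacle — \thmref{takens} was proved for an arbitrary map of $\cM$, and the diagonal is such a map (it exists in any category with finite products, in particular in any pointed model category satisfying our standing hypotheses). No additional construction or verification is needed.

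If one wanted to be a bit more expansive, one could simply remark that the first two inequalities $\compl(X)\leq \relcompl(X)$ and $\Pushcompl(X)\leq \Compl(X)$ follow from the obvious fact that a (strong) relative dominance witness is in particular a simple one, the middle inequality $\relcompl(X)\leq \Pushcompl(X)$ is the content of \propref{catretractstrong} specialized to $\Delta$, and the final inequality $\Compl(X)\leq \compl(X)+1$ is the nontrivial part of \thmref{takens} applied to $\Delta$ (built from the join construction of \lemref{jointmagique} together with the attachment step of \lemref{cylcone}). But since \thmref{takens} packages all of this together, the cleanest proof is just to cite it.
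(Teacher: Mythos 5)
Your proof is correct and is exactly what the paper does: the paper introduces the proposition with the remark that it follows from Theorem~\ref{takens} applied to $\Delta\colon X\to X\times X$, and gives no further argument. The optional expansion you offer (tracing the middle inequality back to Proposition~\ref{catretractstrong} and the last to the join-plus-cone construction) is accurate but unnecessary, as you yourself note.
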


\smallskip Observe the following other lower bound of the strong complexity:

\begin{proposition}\label{CatRelcat}
For any object $X$ and any map $\iota_X\colon A \to X$ of $\cM$, 
$$\Cat(X^{i}) \leq \Relcat(\delta_i(\iota_X)) \leq \Relcat(\epsilon_i(\iota_X)) \leq \Relcat(\Delta_\ipu).$$
\end{proposition}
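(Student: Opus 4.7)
The plan is to re-use the diagram of homotopy pullbacks already constructed in the proof of \propref{catsecat}, and to replace the three appeals there to \propref{inegalitespaires} by three appeals to \lemref{jointmagique}, which asserts that pulling back a map cannot increase its strong relative category.

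Explicitly, I would begin from the diagram
$$\xymatrix{
\ast\ar[r]\ar[d]&A\ar[r]\ar[d]^{\delta_i}&G_i(\iota_X)\ar[r]^{g_i}\ar[d]^{\varepsilon_i}&X\ar[d]^{\Delta_\ipu}\\
X^i \ar[r]_(.45){\inc_1}&X^i \times A\ar[r]&T_i(\iota_X)\ar[r]_{t_i}&X^\ipu
}$$
in which the three inner squares are homotopy pullbacks (the two rightmost by \thmref{ganeawhitehead}, the leftmost by the Prism lemma \ref{prisme}, exactly as observed in the proof of \propref{catsecat}). Reading from right to left, I would then apply \lemref{jointmagique} to each square in turn: the rightmost exhibits $\varepsilon_i(\iota_X)$ as a homotopy pullback of $\Delta_\ipu$, giving $\Relcat(\varepsilon_i(\iota_X)) \leq \Relcat(\Delta_\ipu)$; the middle exhibits $\delta_i(\iota_X)$ as a homotopy pullback of $\varepsilon_i(\iota_X)$, giving $\Relcat(\delta_i(\iota_X)) \leq \Relcat(\varepsilon_i(\iota_X))$; and the leftmost exhibits the basepoint inclusion $\ast \to X^i$ as a homotopy pullback of $\delta_i(\iota_X)$, giving $\Relcat(\ast \to X^i) \leq \Relcat(\delta_i(\iota_X))$.

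To finish, I would invoke the convention $\Cat(X^i) = \Relcat(X^i,\ast)$ introduced just after \defref{strongrelcat} to identify the left-hand side of the last inequality with $\Cat(X^i)$; concatenating the three inequalities then yields the claim. I do not anticipate any real obstacle: the statement is the strong-relative-category analogue of \propref{catsecat}, with \lemref{jointmagique} playing the role that \propref{inegalitespaires} played in the original proof. The only point that deserves a moment's attention is confirming that \lemref{jointmagique} applies to each of the three squares as a genuine homotopy pullback diagram---which, having already been verified for the proof of \propref{catsecat}, requires no further work.
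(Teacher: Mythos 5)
Your proposal is correct and is precisely the paper's proof: the paper's entire argument is ``Use \lemref{jointmagique} with the same homotopy pullbacks as in \propref{catsecat},'' and you have simply unpacked that one-line instruction, correctly identifying the three pullback squares and invoking the pullback half of \lemref{jointmagique} for each.
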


In particular, $\Cat(X) \leq \Compl(X)$.

\begin{proof}
Use \lemref{jointmagique} with the same homotopy pullbacks as in \propref{catsecat}.
\end{proof}




\subsection{Complexity of a suspension}\label{CompSX}

Let us consider the {\em pinch} map $p\colon \SX \to \SX \vee \SX$ which is the whisker map induced by
the top homotopy pushout in the following homotopy commutative diagram:
$$\xymatrix@R=1pc@C=0.8pc{
&X\ar[ld]\ar[rr]\ar[dd]|!{[ld];[rd]}\hole&&\ast\ar[ld]\ar[rd]\ar[dd]|\hole&\\
\ast\ar[rr]\ar[dd]&&\SX\ar@{..>}[dd]_(.3)p\ar@{=}[rr]&&\SX\ar[dd]^\Delta\\
&\ast\ar[rr]|(.5)\hole\ar[ld]&&\SX\ar[ld]^(.3){\inc_2}\ar[rd]&\\
\SX\ar[rr]_(.4){\inc_1}&&\SX\vee \SX\ar[rr]_{t_1}&&\SX\times \SX
}$$
where $\inc_1$ and $\inc_2$ are the obvious maps.
The outside part of the diagram is described in \examref{suspcoh} in the Appendix, and it is extended to the whole homotopy commutative diagram using \lemref{whiskercube}.

\begin{lemma}\label{complpinch}
We have $\Relcat (p) \leq 1$.
\end{lemma}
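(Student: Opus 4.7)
The plan is to apply \propref{cylcofibre}, which yields $\Relcat(\iota_X) \leq 1$ for any map $\iota_X$ fitting as the second arrow of a homotopy cofibration sequence. Thus it suffices to exhibit such a sequence $X \to \SX \to \SX \vee \SX$ whose second arrow is homotopic to $p$.

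To build the first map, the \emph{equator} $e\colon X \to \SX$, inside $\cM$ I would factor $X \to \ast$ as a cofibration $X \hookrightarrow CX$ followed by a weak equivalence $CX \to \ast$. Since $X \hookrightarrow CX$ is a cofibration, the strict pushout $CX \cup_X CX$ is a homotopy pushout and realizes the top face of the cube (the suspension pushout) up to homotopy equivalence, so one may take $\SX = CX \cup_X CX$. The equator is then the composite $e\colon X \hookrightarrow CX \to CX \cup_X CX = \SX$, the inclusion of $X$ as the base of the first cone.

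Next I would compute the homotopy cofibre of $e$ by attaching yet a third cone: $\SX \cup_X CX = CX \cup_X CX \cup_X CX$, three cones glued along their common base $X$. Since each $CX$ is weakly contractible, collapsing any one of the three cones gives a homotopy equivalence to $\SX/X \simeq \SX \vee \SX$, and the induced quotient map $\SX \to \SX \vee \SX$ is a pinch.

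The main technical step is identifying this cone-based cofibre map with the pinch $p$ defined by the cube. This should follow from the universal property of the cube's top (suspension) and bottom (wedge) homotopy pushouts: they determine the whisker map $p$ uniquely up to homotopy, and the three-cone cofibre construction realizes the same whisker map, since attaching the extra cone is just the collapse of the top face's middle object $X \to \ast$ to a point inside the second copy of $CX$. Once this identification is verified, applying \propref{cylcofibre} to $X \xrightarrow{e} \SX \xrightarrow{p} \SX \vee \SX$ yields $\Relcat(p) \leq 1$.
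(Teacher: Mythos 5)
Your proposal uses the same key lemma as the paper (\propref{cylcofibre}) and the same high-level idea: realize $p$ as the homotopy cofibre map of the equator $X \to \SX$. Where you diverge is in \emph{how} you establish that $p$ is that cofibre map. The paper's argument stays entirely inside the cube diagram it has just drawn: it observes that the top, bottom, left, and back faces of the inside cube are homotopy pushouts by construction, and then invokes the Prism lemma \ref{prisme} to conclude that the front and right faces are homotopy pushouts as well. Pasting the left face (the suspension pushout, with right edge $* \to \SX$) onto the front face (with left edge $* \to \SX$ and right edge $p$) yields a homotopy pushout square
$$\xymatrix{X \ar[r] \ar[d] & \SX \ar[d]^{p}\\ \ast \ar[r] & \SX\vee\SX}$$
exhibiting $p$ directly as a cofibre map, with no auxiliary models needed.

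By contrast, you build explicit cone models $CX$, form $\SX$ as $CX\cup_X CX$ and the cofibre of the equator as a three-cone union, and then argue that collapsing one cone recovers $\SX\vee\SX$. Conceptually this is sound, but the argument becomes model-dependent in a way the paper deliberately avoids, and the passage you flag as ``the main technical step'' --- identifying the cone-cofibre map with the whisker map $p$ from the cube --- is exactly where the work lies and is only sketched. To make it rigorous you would need to verify that your three-cone square is homotopy equivalent, as a square, to the composed face of the cube, and then invoke uniqueness of whisker maps; that is essentially a reconstruction of the Prism-lemma argument the paper uses directly. So both proofs land in the same place, but the paper's is shorter, model-category agnostic, and bypasses the identification step that your write-up leaves open.
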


\begin{proof}
Observe that the faces of the inside cube in the preceeding diagram are homotopy pushouts (use the Prism lemma \ref{prisme} to show that the front and right squares are indeed homotopy pushouts).
So $p$ appears as the homotopy cofibre of $X \to \SX$ and \propref{cylcofibre} gives the result.
\end{proof}

Actually we have the following explicit homotopy commutative diagram with a homotopy pushout, where $\inc_1$ and $\pro_1$ are the obvious maps:
$$\xymatrix{
\SX\ar[r]^(.4){\inc_1}\ar@{=}[rd]&\SX \vee X\ar[r]^(.55){\pro_1}\ar[d]^{\pro_1}&\SX\ar[d]^{p}\\
&\SX\ar[r]&\SX\vee \SX
}$$
and  \defref{strongrelcat} yields $\Relcat(p) \leq 1$ directly.

\begin{theorem}\label{complsusp}Let $X$ be any object of $\cM$. We have $$\Compl(\SX) \leq 2.$$
\end{theorem}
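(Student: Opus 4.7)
The plan is to apply \lemref{cylcone} to bootstrap from the bound $\Relcat(p)\leq 1$ of \lemref{complpinch} to $\Relcat(\Delta)\leq 2$, using the factorization $\Delta \simeq t_1 \circ p$, where $p\colon \SX \to \SX \vee \SX$ is the pinch map and $t_1\colon \SX \vee \SX \to \SX\times\SX$ is the wedge-to-product map (both appearing in the diagram preceding \lemref{complpinch}). Applying \lemref{cylcone} with $\iota_X = p$, $\iota_C = \Delta$, and $\chi = t_1$ reduces the problem to exhibiting a homotopy pushout
$$\xymatrix@C=2pc{
Z \ar[r]^{\rho} \ar[d]_{\tau} & \SX \ar[d]^{\Delta} \\
\SX \vee \SX \ar[r]_{t_1} & \SX \times \SX
}$$
equipped with a section $\sigma\colon \SX \to Z$ of $\rho$ such that $\tau \circ \sigma \simeq p$.

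The key input is the Puppe cofibre sequence for $t_1$: its cofibre is $\SX \wedge \SX \simeq \Sigma(\SX \wedge X)$, so desuspending produces an attaching map $\omega\colon \SX \wedge X \to \SX \vee \SX$ whose homotopy cofibre is $\SX \times \SX$. I take $Z = \SX \vee (\SX \wedge X)$ with $\sigma = \inc_1$, $\rho = \pro_1$, and $\tau$ given by $p$ on the first summand and $\omega$ on the second. The relations $\rho\circ\sigma = \id_\SX$ and $\tau\circ\sigma = p$ are immediate. To see the square is a homotopy pushout, I compute it summand-wise on $Z$: over the $\SX$-summand, where one leg is $\id_\SX$ and the other is $p$, the pushout is the trivial one, yielding $\SX\vee\SX$; over the $\SX\wedge X$-summand, where one leg is the constant map to the basepoint and the other is $\omega$, one attaches the cone $C(\SX\wedge X)$ along $\omega$, producing $\SX\times\SX$ by construction of $\omega$. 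The induced map from the top-right $\SX$ is then $t_1 \circ p \simeq \Delta$, so \lemref{cylcone} gives $\Compl(\SX) = \Relcat(\Delta) \leq \Relcat(p)+1 \leq 2$.

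The main obstacle is the existence of $\omega$ and the identification of $\SX\times\SX$ as its mapping cone in the general closed model category setting. This is precisely the Puppe desuspension of the cofibration $\SX\vee\SX \to \SX\times\SX \to \SX\wedge\SX$, which is available because the cofibre $\SX\wedge\SX$ is a double suspension. Once this identification is granted, the rest of the argument is a routine summand-wise pushout decomposition and a direct application of \lemref{cylcone}.
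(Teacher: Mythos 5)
Your overall strategy matches the paper's: exhibit a homotopy pushout with top horizontal $\SX$, bottom $t_1\colon\SX\vee\SX\to\SX\times\SX$, and a section as required by \lemref{cylcone}, then combine with \lemref{complpinch}. Your $Z = \SX\vee(\SX\wedge X)$ is also the paper's $Z = \SX\vee(X\bowtie X)$ (note $\SX\wedge X\simeq\Sigma(X\wedge X)\simeq X\bowtie X$). But the step you flag as the ``main obstacle'' is exactly where the argument has a gap, and your proposed fix for it does not work.

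There is no general ``Puppe desuspension'' of a cofibration sequence. Puppe's sequence extends to the \emph{right} (to suspensions of the earlier terms); given a homotopy cofibration $A\to B\to C$ with $C\simeq\Sigma^2 D$, nothing formal produces a map $\Sigma D\to A$ whose homotopy cofibre is $B$. The fact that $\SX\times\SX$ is the mapping cone of a map $X\bowtie X\to\SX\vee\SX$ is a genuine theorem about products of cofibres, not a formal consequence of the cofibre of $t_1$ being a double suspension. In the paper's axiomatic setting this is \propref{cofibrationproduit} (a consequence of the Cube axiom), which supplies the homotopy cofibration sequence $X\bowtie X\xrightarrow{u}\SX\vee\SX\xrightarrow{t_1}\SX\times\SX$ directly, and hence the required attaching map $u$ (your $\omega$). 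So the existence and identification you need must be sourced from \propref{cofibrationproduit}, not from a desuspension argument.

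Once you invoke \propref{cofibrationproduit}, your summand-wise pushout computation is heuristically right but should still be tightened: the paper does this by first applying \propref{doublecofibre} to the cofibration to get the pushout with $(\id,u)$ and $\pro_1$, then precomposing with the pushout square given by $p\vee\id$ and $\pro_1$, and pasting. That two-square pasting is the rigorous version of your ``compute summand-wise'' remark, and is the form needed to feed directly into \lemref{cylcone} (along with checking $t_1\circ p\simeq\Delta$). With the reference to \propref{cofibrationproduit} substituted for the desuspension claim, your proof becomes essentially the paper's.
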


\begin{proof}
From \propref{cofibrationproduit} we have a homotopy cofibration sequence:
$$\xymatrix{X \bowtie X\ar[r]_(.45)u& \SX\vee \SX \ar[r]_{t_1}& \SX \times \SX}$$
So \propref{doublecofibre} gives the following homotopy pushout:
$$\xymatrix@C=3pc{%
(\SX\vee \SX) \vee (X\bowtie X)\ar[r]^(.6){(\id,u)}\ar[d]_{\pro_1}&\SX\vee \SX\ar[d]^{t_1}\\
\SX\vee \SX\ar[r]_{t_1}&\SX\times \SX
}$$
We now extend this square to the following homotopy commutative diagram:
$$\xymatrix@C=3pc{%
\SX \vee (X\bowtie X)\ar[r]^(.45){p \vee \id}\ar[d]_{\pro_1}&(\SX\vee \SX) \vee (X\bowtie X)\ar[r]^(.6){(\id,u)}\ar[d]_{\pro_1}&\SX\vee \SX\ar[d]^{t_1}\\
\SX \ar[r]_p&\SX\vee \SX\ar[r]_{t_1}&\SX\times \SX
}$$
where $p \vee \id$ is the whisker map of $\inc_1 \circ p$ and $\inc_2$.
The two squares are homotopy pushouts, so the outside rectangle is a homotopy pushout, too.
Recall that $t_1\circ p \simeq \Delta$. We now obtain a homotopy commutative diagram with a homotopy pushout:
$$\xymatrix@C=3pc{
\SX\ar[r]^(.35){\inc_1}\ar[rd]_{p}&\SX \vee (X\bowtie X)\ar[r]^(.6){\pro_1}\ar[d]^{(p,u)}&\SX\ar[d]^{\Delta}\\
&\SX\vee \SX\ar[r]_{t_1}&\SX\times \SX
}$$
 Finally \lemref{complpinch} and \lemref{cylcone} yield the desired result. 
\end{proof}

\begin{example}
As mentioned before, for any $n > 0$, $\compl(S^{2n}) = 2$.
Therefore, by \propref{takenscompl} and \thmref{complsusp}, we see that $\Compl(S^{2n}) = 2$. 
\end{example}


\appendix
\section[Appendix]{Toolbox}

All constructions made in this paper can be achieved in a closed model category $\cM$ satisfying the following additional axiom:

\begin{axiom}[Cube axiom]\label{cube}
For any homotopy commutative diagram in $\cM$:
$$\xymatrix@R=1pc@C=1pc{
&\bullet\ar[rr]\ar[ld]\ar[dd]|\hole
&&\bullet\ar[ld]\ar[dd]\\
\bullet\ar[rr]\ar[dd]&&\bullet\ar[dd]&\\
&\bullet\ar[dl]\ar[rr]|!{[ur];[dr]}\hole&&\bullet\ar[dl]\\
\bullet\ar[rr]&&\bullet&
}$$
if the bottom face is a homotopy pushout and the four vertical faces are homotopy pullbacks,
then the top face is a homotopy pushout.\end{axiom}

Of course this axiom is satisfied in the category of topological spaces; see \cite{Mat76}, Theorem 25.

\smallskip
The following lemma is used very often, sometimes implicitly:

\begin{lemma}[Prism lemma]\label{prisme}
Suppose given any homotopy commutative diagram in $\cM$:
$$\xymatrix@R=1pc@C=1pc{%
&\bullet\ar[rd]\ar[ld]\ar[dd]|\hole&\\
\bullet\ar[dd]\ar[rr]&&\bullet\ar[dd]\\
&\bullet\ar[rd]\ar[ld]&\\
\bullet\ar[rr]&&\bullet
}$$

1) Suppose the left square is a homotopy pushout. Then the front square is a homotopy pushout if and only if the right square is a homotopy pushout.

2) Suppose the front square is a homotopy pullback. Then the left square is a homotopy pullback if and only if the right square is a homotopy pullback.
\end{lemma}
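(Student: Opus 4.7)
The plan is to deduce the Prism lemma from the standard \emph{pasting lemmas} for homotopy pushouts and pullbacks in the closed model category $\cM$. The key combinatorial observation is that the three squares of the prism are related by pasting. Label the six vertices as $T$ (top), $L$ and $R$ (the two middle-row vertices), $M$ (inner middle), $L'$ and $R'$ (the two bottom vertices), so that the left, front, and right squares have vertex sets $\{T,L,M,L'\}$, $\{L,R,L',R'\}$, and $\{T,R,M,R'\}$, respectively. Pasting the left and front squares along their common edge $L\to L'$ yields a rectangle with corners $T, R, M, R'$ whose top and bottom edges are the composites $T\to L\to R$ and $M\to L'\to R'$; by the homotopy commutativity of the prism, these composites are homotopic to the direct edges $T\to R$ and $M\to R'$ of the right square. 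So, up to homotopy, the right square coincides with the paste of the left and front squares.

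For Part 1, I would invoke the pasting lemma for homotopy pushouts: when the left square is already a homotopy pushout, the paste (that is, the right square) is a homotopy pushout if and only if the complementary square (the front square) is. This is precisely the stated biconditional.

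For Part 2, I would dually invoke the pasting lemma for homotopy pullbacks, now with the front square playing the role of the ``target-side'' pullback in the paste. Under the same identification of the right square with the paste of the left and front squares, the paste is a homotopy pullback if and only if the other constituent square (the left square) is. This gives the biconditional in Part 2.

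The only real work is to justify the pasting lemmas themselves in $\cM$. These are well known in the setting of a closed model category and follow directly from the universal properties characterizing homotopy pushouts and pullbacks; they do not invoke the Cube axiom. The main obstacle, such as it is, is the bookkeeping of homotopies between iterated composites needed to identify the paste with the right square — but these are exactly the data of homotopy commutativity of the prism, so no genuine difficulty arises.
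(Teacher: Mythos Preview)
Your argument is correct: the Prism lemma is precisely the pasting (or ``two-out-of-three'') lemma for homotopy pushouts and homotopy pullbacks, applied to the paste of the left and front faces along the common edge $L\to L'$. Your identification of the three faces and of the right face with the outer rectangle of the paste is accurate, and the biconditionals in Parts 1 and 2 then follow from the standard pasting lemmas exactly as you say.

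As for comparison with the paper: there is nothing to compare. The paper states the Prism lemma in its appendix as a standard tool and gives no proof; it is cited as background (in the spirit of Mather's paper \cite{Mat76}) rather than established anew. Your write-up therefore supplies what the paper omits, and the route you take---reducing to the pasting lemma---is the canonical one. One small stylistic remark: your labeling calls $M$ the ``inner middle'' vertex, but in the displayed prism it sits in the row \emph{below} $L$ and $R$ (it is the target of the hidden vertical arrow $T\to M$); this does not affect the argument, but you may want to adjust the description to avoid confusing a reader trying to match your labels to the picture.
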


By a `homotopy commutative diagram' we mean not only a set of maps, but also a choice of homotopies between (composites of) maps with same source and target, and which are `compatible' with each other; see \cite{Mat76} or \cite{DoeHa06} for details.

In particular, a `homotopy commutative square' involves the choice of one homotopy.

\begin{example}\label{suspcoh}Consider the following diagram:
$$\xymatrix@R=0.8pc@C=0.8pc{
&X\ar[rr]\ar[ld]\ar[dd]|\hole&&\ast\ar[ld]\ar[dd]\\
\ast\ar[rr]\ar[dd]&&{\SX}\ar[dd]^(.3)\Delta&\\
&\ast\ar[dl]\ar[rr]|!{[ur];[dr]}\hole&&{\SX}\ar[dl]^(.4){\inc_2}\\
{\SX}\ar[rr]_(.45){\inc_1}&&{\SX \times \SX}&
}$$
in the category of pointed topological spaces, where $\inc_1$ and $\inc_2$ are the obvious maps and $\Sigma X$ is the reduced suspension of $X$,
with the homotopy $H\colon X\times I \to \SX\colon (x,t) \mapsto [x,t]$ attached to the top, left and back squares, and (of course) the static homotopy attached to the front, right and bottom squares.
The left, back and bottom squares yield a homotopy:
$$K \colon X \times I \to \SX \times \SX \colon (x,t) \mapsto 
\left\{\begin{array}{ll}([x,2t],\ast) &\mbox { if } t \leq \undemi\\[2mm]
(\ast,[x,2t-1]) &\mbox { if } t \geq \undemi\end{array}\right.$$
The top, front and right squares yield a homotopy:
$$L \colon X \times I \to \SX \times \SX \colon (x,t) \mapsto 
([x,t],[x,t])$$
The homotopies are compatible (we write $K \equi L$) thanks to the following higher homotopy:
$$M \colon X\times I \times I \to \SX \times \SX \colon (x,t,s) \mapsto 
\left\{\begin{array}{ll}([x,\frac{2t}{s+1}],\ast) &\mbox { if } t \leq \frac{1-s}{2}\\[2mm]
([x,\frac{2t}{s+1}],[x,\frac{2t+s-1}{s+1}]) &\mbox{ if } \frac{1-s}{2} \leq t \leq \frac{1+s}{2}\\[2mm]
(\ast,[x,\frac{2t+s-1}{s+1}]) &\mbox { if } t \geq \frac{1+s}{2}\end{array}\right.$$
\end{example}

We don't write the homotopies explicitly in the paper because in most cases, all we have to know is that they are there!
Many diagrams are built using homotopy pushouts and/or homotopy pullbacks constructions, and in this case the homotopies are well defined (up to equivalences) and the diagrams are naturally homotopy commutative. \lemref{whiskercube} below illustrates this fact.
However, it is important to keep in mind that all these homotopies are still there and are well defined (up to equivalences).

\smallbreak
We write $f \simeq g$ when the map $f$ is homotopic to $g$.

We denote by $U\vee_V W$  the homotopy pushout of maps $\xymatrix{U&\ar[r]\ar[l]V&W}$ 
and by $A \times_B C$ the homotopy pullback of maps $\xymatrix{A\ar[r]&B&\ar[l]C}$. 

When the category $\cM$ is pointed, i.e. it has a {\em zero} object $\ast$ both initial and terminal, 
we omit to write the subscript on $\vee$ or $\times$  if it is the zero object.
The homotopy pushout $U \vee_V \ast$ is denoted by $U/V$,
the map $U \to U/V$ (or $U/V$ itself) is called the {\em homotopy cofibre} of $V \to U$ 
and the sequence $\xymatrix{V\ar[r]&U\ar[r]&U/V}$ is called a {\em homotopy cofibration}.
Let $F = E \times_B \ast$; the map $F \to E$ (or $F$ itself) is called {\em homotopy fibre} of $E\to B$.
Finally, we write $\Sigma X = *\vee_X *$, called {\em suspension} of $X$, and $\Omega X = * \times_X *$.

\smallbreak
Consider two homotopy commutative squares in $\cM$, where the inside square is a homotopy pushout:
$$\xymatrix@R=1.5pc@C=1.5pc{
&A\ar[dr]_a\ar[rrrd]^f\\
P\ar[rd]_v\ar[ur]^u&&J\ar@{-->}[rr]|-(.35)j&&B\\
&C\ar[ru]^c\ar[rrru]_g
}$$
These two squares {\em and} the two attached homotopies $H\colon a\circ u\simeq c\circ v$ and 
$K\colon f\circ u\simeq g\circ v$ induce a map $j\colon J\to B$, called the \emph{whisker} map, together with homotopies of the triangles $M\colon f\simeq j\circ a$ and $N\colon j\circ c\simeq g$ making the whole diagram homotopy commutative, that is $M\circ (u\times \id_I) + j\circ H + N\circ (v\times \id_I) \equi K$. 
The map $j$ is `universal' in the following sense: If there is another map $j'$ and (other) homotopies $M'\colon f\simeq j'\circ a$ and $N'\colon j'\circ c\simeq g$ of the triangles making the whole diagram homotopy commutative, then there is a homotopy $L\colon j \simeq j'$, and the diagram with all maps, including $j$ and $j'$, and all homotopies, is homotopy commutative, that is
$M+ L\circ (a\times \id_I) \equi M'$ and $L\circ(c\times \id_I) + N' \equi N$.
See \cite{Mat76}, Theorem 11.

The whisker map $j$ is sometimes denoted by $(f,g)$.

Despite this notation, we emphasize that $j$ is determined not only by the outside square but also by the attached homotopy. A different choice of homotopy lead to a different induced map. In other words, the whisker map is unique (up to homotopy) {\em once} the homotopy is fixed. For instance, in the category of pointed topological spaces, if $A$ and $C$ are the one point set $\{\ast\}$, then $J$ is the reduced suspension $\Sigma P$ and the homotopy attached to the homotopy pushout is $H: P\times I \to \Sigma P: (x,t) \mapsto [x,t]$. Let $B=J=\Sigma P$. If the homotopy attached to the outside square is $H$, i.e. the same as the homotopy attached to the inside square, then $j$ is the identity. But if the homotopy attached to the outside square is the static one, then $j$ is the null map.

There is a `dual' notion of whisker map for homotopy pullbacks. 

\begin{definition}[Join]
If in the above diagram the outside square is a homotopy pullback, then
the whisker map $j$ induced by the homotopy pushout (or $J$ itself) is called the {\em join} of $f$ and $g$.
\end{definition}

We denote the join $J$ by $A\bowtie_B C$. We omit the subscript on $\bowtie$ if it is the zero object.

\smallbreak
Ganea and Whitehead constructions are particular cases of join constructions.

\begin{lemma}[Whisker maps inside a cube]\label{whiskercube}Given a homotopy commutative cube:
$$\xymatrix@R=1pc@C=1pc{
&P\ar[rr]\ar[ld]\ar[dd]|\hole&&A\ar[ld]\ar[dd]\\
C\ar[rr]\ar[dd]&&B\ar[dd]^(.3)\phi&\\
&P'\ar[dl]\ar[rr]|!{[ur];[dr]}\hole&&A'\ar[dl]\\
C'\ar[rr]&&B'&
}$$
it can be extended to a homotopy commutative diagram:
$$\xymatrix@R=1pc@C=1pc{
&P\ar[ld]\ar[rr]\ar[dd]|!{[ld];[rd]}\hole&&A\ar[ld]\ar[rd]\ar[dd]|\hole&\\
C\ar[rr]\ar[dd]&&A \vee_P C\ar@{.>}[dd]_(.3){k}\ar@{.>}[rr]_(.35)j&&B\ar[dd]^\phi\\
&P'\ar[rr]|(.5)\hole\ar[ld]&&A'\ar[ld]\ar[rd]&\\
C'\ar[rr]&&A' \vee_{P'} C'\ar@{.>}[rr]_l&&B'
}$$
where $j$ and $k$ are the whisker maps induced by the top homotopy pushout and $l$ is the whisker map induced by the bottom homotopy pushout.
\end{lemma}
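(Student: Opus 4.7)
The plan is to obtain the three dotted maps $j$, $k$, $l$ one after another by repeated use of the universal property of homotopy pushouts, and then to verify the remaining compatibility $\phi\circ j \simeq l\circ k$ by the uniqueness part of that universal property.

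First I would form the two homotopy pushouts $A\vee_P C$ (top) and $A'\vee_{P'} C'$ (bottom), together with the canonical homotopies attached to each of them. The top face of the given cube is a homotopy commutative square with vertices $P,A,C,B$ and fixed attached homotopy; invoking the universal property of the top homotopy pushout on this square yields the whisker map $j\colon A\vee_P C\to B$, together with triangle homotopies making $P\to A\to B$, $P\to C\to B$ and the inclusions into $A\vee_P C$ fit into a homotopy commutative diagram. The same argument applied to the bottom face of the cube produces $l\colon A'\vee_{P'} C'\to B'$.

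For $k\colon A\vee_P C\to A'\vee_{P'} C'$, I would use the back and left faces of the cube (which are homotopy commutative squares by hypothesis): compose the vertical maps $A\to A'$ and $C\to C'$ with the inclusions of $A'$ and $C'$ into $A'\vee_{P'} C'$, and note that the resulting square with top span $A\leftarrow P\to C$ is homotopy commutative, the required homotopy being pasted from the two vertical-face homotopies and the pushout homotopy of $A'\vee_{P'} C'$. The universal property of the top homotopy pushout then yields $k$ with triangle homotopies on the two incoming maps. This gives a homotopy commutative middle layer in the diagram.

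It remains to show that the rectangle on the right of the extended diagram commutes up to homotopy, i.e. $\phi\circ j \simeq l\circ k$. Here is where the main technical work sits: both maps go from $A\vee_P C$ to $B'$, so by the uniqueness clause of the universal property of $A\vee_P C$ it suffices to exhibit homotopies $\phi\circ j\circ\inc_A \simeq l\circ k\circ\inc_A$ and similarly on $\inc_C$, and to check that these two homotopies are compatible with the pushout homotopy of $A\vee_P C$. The first two follow by pasting the right and front faces of the cube (for the $A$ side) and the left and bottom faces (for the $C$ side) with the triangle homotopies produced when $j$, $k$ and $l$ were constructed. The compatibility condition amounts to a routine but careful pasting of all attached homotopies around the cube; it is precisely here that one needs the `higher homotopy' viewpoint of homotopy commutative diagrams mentioned earlier in the text, and this will be the main obstacle. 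Once the compatibility holds, the uniqueness of whisker maps (up to homotopy) forces $\phi\circ j\simeq l\circ k$, and the whole diagram is then homotopy commutative as claimed.
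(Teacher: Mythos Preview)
Your approach is correct and is essentially the same as the paper's: construct $j$, $k$, $l$ as whisker maps from the appropriate faces of the cube, then prove $\phi\circ j\simeq l\circ k$ by showing that both maps render the same outer diagram (with source span $A\leftarrow P\to C$ and target $B'$) homotopy commutative, whence the uniqueness clause of the universal property of $A\vee_P C$ gives the result. The paper's proof is more terse and does not spell out the homotopy-pasting compatibility check you flag as the main technical point, but the argument is the same.
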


\begin{proof}The top part, the inside cube, and the bottom part of the diagram are homotopy commutative by construction of the whisker maps $j$, $k$ and $l$ respectively. Moreover both $l\circ k$ and $\phi \circ j$ make the following diagram homotopy commutative:
$$\xymatrix@R=1pc@C=1pc{
&A\ar[dr]\ar[rr]&&A'\ar[rd]\\
P\ar[rd]\ar[ur]&&A\vee_P C\ar@{-->}[rr]&&B'\\
&C\ar[ru]\ar[rr]&&C'\ar[ru]
}$$
So by the universal property of the whisker map, $l\circ k \simeq \phi \circ j$ and the whole diagram is homotopy commutative. 
\end{proof}

\begin{proposition}\label{doublecofibre}Let $\cM$ be pointed. Let be given any homotopy cofibration sequence $\xymatrix{Y\ar[r]|f&A\ar[r]|g&X}$. There is a homotopy pushout:
$$\xymatrix@C=2.5pc{
A\vee Y\ar[r]^{(\id_A,f)}\ar[d]_{\pro_1}&A\ar[d]^g\\
A\ar[r]_g&X
}$$
where $(\id_A,f)$ is the whisker map of $\id_A$ and $f$.
\end{proposition}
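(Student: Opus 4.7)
The plan is to apply the Prism lemma (\lemref{prisme}, item~1) twice, first to extract an auxiliary homotopy pushout expressing the collapse of $Y$ in $A\vee Y$, and then to deduce the target square.

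First, I would verify that the square
$$\xymatrix{
Y \ar[r]^{\inc_2} \ar[d] & A \vee Y \ar[d]^{\pro_1} \\
\ast \ar[r] & A
}$$
is a homotopy pushout. To see this, I would stack it to the right of the defining pushout of the wedge:
$$\xymatrix{
\ast \ar[r] \ar[d] & Y \ar[d]^{\inc_2} \ar[r] & \ast \ar[d] \\
A \ar[r]_{\inc_1} & A \vee Y \ar[r]_{\pro_1} & A
}$$
The left square is a homotopy pushout by the definition of $A\vee Y$. Since $\pro_1\circ\inc_1\simeq \id_A$ and $\pro_1$ sends $\inc_2$ to the basepoint, the outer rectangle reduces to the trivial pushout of the span $A\leftarrow\ast\to\ast$ with identity bottom map, which is plainly a homotopy pushout. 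Applying the Prism lemma yields that the right square---the auxiliary claim---is a homotopy pushout.

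Next, I would stack this auxiliary pushout horizontally to the left of the target square:
$$\xymatrix{
Y \ar[r]^{\inc_2} \ar[d] & A \vee Y \ar[d]^{\pro_1} \ar[r]^-{(\id_A,f)} & A \ar[d]^g \\
\ast \ar[r] & A \ar[r]_g & X
}$$
By the universal property of the whisker map, $(\id_A,f)\circ\inc_2\simeq f$; the bottom composite $\ast\to A\to X$ is the basepoint of $X$. Hence the outer rectangle reduces to
$$\xymatrix{
Y \ar[r]^{f} \ar[d] & A \ar[d]^g \\
\ast \ar[r] & X
}$$
which is a homotopy pushout because $Y\to A\to X$ is, by hypothesis, a homotopy cofibration sequence (i.e., $X$ is the homotopy cofibre of $f$). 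A second application of the Prism lemma, now with the left square a homotopy pushout from the first step and the outer rectangle just verified, gives that the target right square is a homotopy pushout, which is the desired conclusion.

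The only delicate bookkeeping is the identification $(\id_A,f)\circ\inc_2\simeq f$, which is immediate from the universal property of the whisker map attached to the wedge pushout; with that in hand, the rest is a direct double application of \lemref{prisme}.
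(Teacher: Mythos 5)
Your proof is correct and follows essentially the same route as the paper's: both decompose the target square against the defining pushout of the wedge and a known cofibre square, and conclude by the Prism Lemma. The paper packages the two pastings into a single three-dimensional diagram and invokes \lemref{whiskercube} to establish its global homotopy commutativity before applying \lemref{prisme}, whereas you run the same argument as two consecutive planar pastings; the difference is cosmetic. The one point you wave at but do not fully settle is exactly the one \lemref{whiskercube} is there for: the Prism Lemma presupposes a homotopy commutative prism (with compatible chosen homotopies), and in your second diagram the middle square $g\circ(\id_A,f)\simeq g\circ\pro_1$ commutes only because $g\circ f\simeq\ast$, so the homotopy on that face has to be chosen compatibly with the whisker-map homotopies on the left square. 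That is precisely the ``delicate bookkeeping'' you flag; the paper discharges it by citing \lemref{whiskercube}, and your proof would benefit from doing the same.
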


\begin{proof}Apply \lemref{whiskercube} to the outside part of the following diagram:
$$\xymatrix@R=1pc@C=1pc{
&\ast\ar[ld]\ar[rr]\ar[dd]|!{[ld];[rd]}\hole&&Y\ar[ld]_(.6){\inc_2}\ar[rd]^f\ar[dd]|\hole&\\
A\ar[rr]_(.25){\inc_1}\ar@{=}[dd]&&A \vee Y\ar[dd]_(.3){\pro_1}\ar[rr]_(.32){(\id,f)}&&A\ar[dd]^g\\
&\ast\ar[rr]|(.5)\hole\ar[ld]&&\ast\ar[ld]\ar[rd]&\\
A\ar@{=}[rr]&&A\ar[rr]_{g}&&X
}$$
We get the whole diagram homotopy commutative. Moreover, using the Prism lemma \ref{prisme} we get that all  squares, except the back one and the leftern front one, are homotopy pushouts.
\end{proof}

\begin{theorem}[Join theorem]\label{joint}
Suppose we have two homotopy commutative squares:
$$\xymatrix{
A\ar[r]\ar[d]&B\ar[d]&C\ar[l]\ar[d]\\
A'\ar[r]&B'&C'\ar[l]
}$$
Then there is a homotopy commutative diagram:
$$\xymatrix{
A\ar[r]\ar[d]&A\bowtie_{B}C\ar[r]\ar[d]&B\ar[d]\\
A'\ar[r]&A'\bowtie_{B'}C'\ar[r]&B'
}$$
Moreover, if the two squares in the first diagram are homotopy pullbacks, then the two squares in the second diagram are homotopy pullbacks as well.
\end{theorem}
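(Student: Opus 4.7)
My plan proceeds in three stages: construct the induced map between joins, then verify the pullback conclusion by setting up a cube suitable for the Cube axiom, and finally handle the remaining square by pasting of pullbacks.

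Stage 1 (Construction of the map). I form the homotopy pullbacks $P = A \times_B C$ and $P' = A' \times_{B'} C'$. The composites $P \to A \to A'$ and $P \to C \to C'$ agree coherently over $B'$, so the universal property of the homotopy pullback $P'$ produces a canonical map $P \to P'$. Next, I form the pushouts $J = A \vee_P C$ (which by definition is $A \bowtie_B C$) and $J' = A' \vee_{P'} C'$ (equal to $A' \bowtie_{B'} C'$), together with their whisker maps $j\colon J \to B$ and $j'\colon J' \to B'$. Applying \lemref{whiskercube} to the cube whose top pushout is $J$ and whose bottom pushout is $J'$ (with targets $B$ and $B'$) yields the induced whisker map $J \to J'$ and the required homotopy commutative diagram.

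Stage 2 (Pullback claim, right square). Now assume both initial squares are homotopy pullbacks, so $A \simeq B \times_{B'} A'$ and $C \simeq B \times_{B'} C'$. Repeated use of \lemref{prisme} (pasting of pullbacks) yields $P \simeq B \times_{B'} P'$. Set $\tilde J := B \times_{B'} J'$; the universal property of the pullback furnishes canonical maps $A \to \tilde J$ and $C \to \tilde J$ agreeing over $P$, and a further pasting argument shows that both the square $A \to \tilde J$ versus $A' \to J'$ and the square $C \to \tilde J$ versus $C' \to J'$ are homotopy pullbacks. I can then assemble a cube whose bottom face is the pushout $J' = A' \vee_{P'} C'$, whose top face has vertices $P, A, C, \tilde J$, and whose four vertical faces are all homotopy pullbacks. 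By the Cube axiom (\axiomref{cube}), the top face is a homotopy pushout, so $\tilde J \simeq A \vee_P C = J$. This gives $J \simeq B \times_{B'} J'$, i.e., the right square $J \to B$, $J' \to B'$ is a homotopy pullback.

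Stage 3 (Left square and main obstacle). The left square $A \to J$, $A' \to J'$ is a pullback by one more instance of pasting in \lemref{prisme}: the outer rectangle $A \to J \to B$, $A' \to J' \to B'$ coincides with the hypothesis pullback square $A \to B$, $A' \to B'$ (since $j$ composed with the pushout inclusion $A \to J$ recovers the original map $A \to B$), and the right square is the pullback established in Stage 2, so the left square is a pullback as well. The main obstacle I anticipate is Stage 2: one must set up the cube so that each of the four vertical faces is verifiably a homotopy pullback, which requires juggling several pasting-of-pullback identifications simultaneously. Once the cube is correctly in place, the Cube axiom delivers the desired equivalence $\tilde J \simeq J$ immediately.
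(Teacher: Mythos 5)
Your proof follows essentially the same route as the paper's: build the pullbacks $P,P'$, apply \lemref{whiskercube} to get the induced map between joins and the commutative prism, then invoke the Prism lemma and Cube axiom for the pullback claim. The paper's treatment of the pullback case is terse ("the Prism lemma and the Cube axiom imply that all vertical faces ... are homotopy pullbacks"), whereas your Stage~2 makes the auxiliary object $\tilde J = B\times_{B'} J'$ explicit and spells out the cube on which the axiom acts; that is exactly the argument the paper leaves implicit, and it is correct.
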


\begin{proof}
We construct the following homotopy commutative diagram
$$\xymatrix@R=0.8pc@C=0.8pc{
&A\times_B C\ar[ld]\ar[rr]\ar@{-->}[dd]|!{[ld];[rd]}\hole&&A\ar[ld]\ar[rd]\ar[dd]|\hole&\\
C\ar[rr]\ar[dd]&&A\bowtie_{B}C\ar@{..>}[dd]\ar[rr]&&B\ar[dd]\\    
&A'\times_{B'} C'\ar[rr]|(.57)\hole\ar[ld]&&A'\ar[ld]\ar[rd]&\\
C'\ar[rr]&&A'\bowtie_{B'}C'\ar[rr]&&B'
}$$
where $\xymatrix@1{A\times_B C\ar@{-->}[r]& A'\times_{B'} C'}$ is  the whisker map induced by the bottom homotopy pullback and 
$\xymatrix@1{A\bowtie_{B}C\ar@{.>}[r]& A'\bowtie_{B'}C'}$ is the whisker map induced by the top homotopy pushout. The rightern front vertical square is homotopy commutative by \lemref{whiskercube}.

\smallbreak
If the two squares we start with are homotopy pullbacks, then the Prism lemma and the Cube axiom imply that all vertical faces of the above diagram are also  homotopy pullbacks.
\end{proof}



\begin{proposition}\label{cofibrationproduit}
Assume that $\cM$ is pointed, and let $X \to A \to A'$ and $Y \to B \to B'$ be two homotopy cofibration sequences.
Consider the following join construction:
$$\xymatrix@R=1.5pc@C=1.5pc{
&A'\times B\ar[dr]\ar[rrrd]\\
A\times B\ar[rd]\ar[ur]&&J\ar@{-->}[rr]|-(.35)j&&A'\times B'\\
&A\times B'\ar[ru]\ar[rrru]
}$$
Then, there is a homotopy cofibration sequence:
$$\xymatrix{X \bowtie Y\ar[r]& J \ar[r]_(.35)j& A' \times B'}$$
\end{proposition}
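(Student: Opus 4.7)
The plan is to identify $A'\times B'$ as the homotopy cofibre of a natural map $u\colon X\bowtie Y\to J$.

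First I construct $u$ using the universal property of the homotopy pushout $X\bowtie Y=X\vee_{X\times Y}Y$: take the composites $X\to A\to A\times B'\to J$ (via the basepoint inclusion $a\mapsto(a,\ast)$) and $Y\to B\to A'\times B\to J$ (via $b\mapsto(\ast,b)$). Their restrictions to $X\times Y$, namely $(x,y)\mapsto(a(x),\ast)$ and $(x,y)\mapsto(\ast,b(y))$, are both homotopic in $J$ to the composite $X\times Y\to A\times B\to J$ sending $(x,y)\mapsto(a(x),b(y))$: the first homotopy comes from the nullhomotopy $b'\circ b\simeq\ast$ applied in the second coordinate, combined with the identification provided by the defining pushout of $J$; the second comes from the analogous nullhomotopy of $a'\circ a$ in the first coordinate. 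These two homotopies furnish the coherence needed to invoke the universal property.

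Next I observe that the composite $j\circ u\colon X\bowtie Y\to A'\times B'$ is nullhomotopic: its restriction to $X$ factors as $X\to A\to A'\to A'\times B'$ (via the basepoint of $B'$), which is null because $X\to A\to A'$ is null by the given cofibration; the analogous argument applies to $Y$. Hence we obtain an induced whisker map $\varphi\colon J/(X\bowtie Y)\to A'\times B'$.

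To show that $\varphi$ is a homotopy equivalence, I would exhibit both $A'\times B'$ and $J/(X\bowtie Y)$ as homotopy colimits of the same $3\times 3$ diagram
$$\xymatrix@R=1pc@C=1pc{
\ast & Y \ar[l] \ar[r] & B \\
X \ar[u] \ar[d] & X\times Y \ar[l] \ar[r] \ar[u] \ar[d] & X\times B \ar[u] \ar[d] \\
A & A\times Y \ar[l] \ar[r] & A\times B
}$$
For $A'\times B'$: the homotopy pushout of each column is (respectively) $A'$, $A'\times Y$, $A'\times B$, by the Cube axiom \ref{cube} applied to a cube whose top face is the defining pushout $X\to A,\; X\to\ast$ and whose vertical faces are homotopy pullbacks of the form $\xymatrix@1{X\ar[r]\ar[d]&\ast\ar[d]\\X\times Z\ar[r]&Z}$ for $Z\in\{Y,B\}$. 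Pushing out the three column colimits yields $A'\times B'$ by the analogous Cube-axiom argument for the cofibration $Y\to B\to B'$. For $J/(X\bowtie Y)$: unfold $X\bowtie Y$ as its defining pushout, $J$ as $(A\times B')\vee_{A\times B}(A'\times B)$, and further unfold $A\times B'$ and $A'\times B$ as pushouts by the same Cube-axiom argument; the cospan $J\leftarrow X\bowtie Y\to\ast$ then becomes the homotopy colimit of the same $3\times 3$ diagram, matching the one for $A'\times B'$ up to compositions that commute in the colimit.

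The main obstacle is the repeated invocation of the Cube axiom to justify that products preserve the relevant homotopy pushouts in the general model-categorical setting. Each individual verification is routine once the appropriate cube is set up, but assembling them coherently in the $3\times 3$ picture and checking that the two resulting descriptions of the $3\times 3$ hocolim agree requires care.
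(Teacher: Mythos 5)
The paper itself gives no proof of this proposition; it simply refers to Kahl's thesis and to Proposition~B.35 of Cornea--Lupton--Oprea--Tanr\'e. So there is no internal argument to compare against.

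Your overall strategy is reasonable: construct $u\colon X\bowtie Y\to J$ from the universal property, observe $j\circ u\simeq\ast$, and try to show the whisker map $\varphi\colon J/(X\bowtie Y)\to A'\times B'$ is an equivalence. The first two steps are essentially fine (one should note that $X\to A\to A'$ being a homotopy cofibration sequence is exactly what makes $a'\circ a$ nullhomotopic). The gap is in the third step. You claim to exhibit both $A'\times B'$ and $J/(X\bowtie Y)$ as hocolims of the same $3\times3$ diagram, but you only show this for $A'\times B'$ (the rows-then-columns and columns-then-rows computations, where the Cube axiom enters to commute $-\times Z$ past the relevant pushouts). For $J/(X\bowtie Y)$ the unfolding you describe does \emph{not} give that $3\times3$ grid: expanding the cospan $\ast\leftarrow X\bowtie Y\to J$ vertex by vertex gives a $3\times3$ diagram whose last row is $A\times B'\leftarrow A\times B\to A'\times B$, not $A\leftarrow A\times Y\to A\times B$; and further expanding $A\times B'$ and $A'\times B$ as pushouts turns the bottom row into a five-object zigzag $A\leftarrow A\times Y\to A\times B\leftarrow X\times B\to B$, so you end up with a diagram over a larger index poset, not over the original $3\times3$ grid. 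Getting back to the $3\times3$ requires a genuine cofinality argument (the five-object zigzag is homotopy final in the punctured $3\times3$ poset, so both compute $J$) together with a Mayer--Vietoris decomposition of the $3\times3$ poset along the top-left $2\times2$ square and its complement, and none of that is in your write-up; the sentence ``matching the one for $A'\times B'$ up to compositions that commute in the colimit'' is precisely where the missing work sits. Finally, even after both spaces are identified as hocolims of the same diagram, one still has to check that the specific map $\varphi$ produced by your nullhomotopy is the one that the identification gives, not merely that some equivalence exists.

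So the plan is salvageable, but as written the crucial third step asserts rather than proves the identification, and understates what is required: it is not a matter of ``assembling coherently'', but of supplying a cofinality/gluing argument that the present sketch omits.
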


This proposition relies on the Cube axiom; see \cite{Kah98} or \cite{CLOT03}, Proposition B.35.

\bibliographystyle{plain}

\bigskip

\noindent Jean-Paul Doeraene and Mohammed El Haouari

\noindent {\tt doeraene@math.univ-lille1.fr haouari@math.univ-lille1.fr}

\noindent {D\'epartement de Math\'ematiques\\
      UMR-CNRS 8524\\
         Universit\'e de Lille~1\\
         59655 Villeneuve d'Ascq Cedex\\
         France}

\end{document}